\theoremstyle{plain}
\newtheorem{theorem}{Theorem}
\newtheorem{corollary}{Corollary}
\newtheorem{lemma}{Lemma}
\theoremstyle{definition}
\newtheorem{definition}{Definition}
\newtheorem{remark}{Remark}
\theoremstyle{plain}
\begin{document}
\title{Polynomial algorithm for the disjoint bilinear programming problem  with an acute-angled polytope for a disjoint subset}


\author{Dmitrii Lozovanu\footnote{Moldova State University, Institute of Mathematics and Computer Science, Academy str., 5, Chisinau, MD--2028, Moldova,
\texttt{lozovanu@math.md}}}



\date{February 12, 2025}

\maketitle

\begin{abstract} We consider the disjoint bilinear
programming problem in which one of the disjoint subsets has the
structure of an acute-angled polytope. An optimality criterion for
such a problem is formulated and proved, and based on this,  a
polynomial algorithm for its solving is proposed and grounded. We
show that the proposed algorithm can be efficiently used for
studying and solving the boolean linear programming problem and the
piecewise linear concave programming problem.

\textbf{Keywords} Disjoint bilinear programming, Acute-angled
polyhedron, Boolean linear programming,  Resource allocation
problem, Polynomial time algorithm
\par\textbf{Mathematics Subject Classification} 65K05, 68Q17
\end{abstract}

\section{Introduction and problem formulation}\label{ddd}
The main results of the article are concerned with studying and
solving  the disjoint programming problem in the case when one of
the disjoint subsets has the structure of an acute-angled polytope.
We formulate and prove an optimality criterion for this problem and,
based on this, propose a polynomial algorithm for its solving.
Furthermore, we show that these results can be efficiently used for
studying and solving the boolean programming problem and the
piecewise linear concave programming problem.

The formulation of the disjoint bilinear programming problem is the
following \cite{aajs,alt,ahjs,dak,l91,l07,whi}:

\medskip
\noindent\emph{Minimize}
\begin{equation}
z=x^TCy+g x+e y\label{eq1}
\end{equation}
\emph{subject to}
\begin{equation}
A x \le a,\ \ \ x \ge 0; \label{eq2}
\end{equation}
\begin{equation}
B y \le b,\ \ \ y \ge 0, \label{eq3}
\end{equation}

\medskip
\noindent where

 $\qquad\qquad \ \ C=(c_{ij})_{n\times m},\ \ \
A=(a_{ij})_{q\times n},\ \ \ B=(b_{ij})_{l\times m}, $
$$ a^T=(a_{10},
a_{20},\dots, a_{q0})\in R^q, \ \ b^T=(b_{10}, b_{20}, \dots,
b_{l0})\in R^l,$$
$$g=(g_1, g_2, \dots g_n)\in R^n,\ \  \ e=(e_1, e_2, \dots, e_m)\in
 R^m,$$
$$x^T=(x_1, x_2, \dots,x_n)\in R^n, \ \ y^T=(y_1, y_2, \dots, y_m)\in R^m.$$
Throughout this article we will assume that the solution sets $X$
and $Y$ of the corresponding systems (\ref{eq2}) and (\ref{eq3}) are
nonempty and bounded. Our  aim  is to formulate and prove an
optimality criterion for problem (\ref{eq1})-(\ref{eq3}) that can be
efficiently used for studying and solving the problem in the case
when the set of solutions $Y$ has the structure of an acute-angled
polytope. Moreover we show that for this problem there exist a
polynomial algorithm for its solving.

 \medskip
 Disjoint bilinear programming  model (\ref{eq1})-(\ref{eq3}) comprises   a  large class of integer and combinatorial optimization
problems, including the well-known NP-complete  problem of the
existence of a boolean solution for a given system of linear
inequalities
\begin{equation}
\left\{%
\arraycolsep 2pt
\begin{array}{ccl}
\sum\limits_{i=1}^n a_{ij}x_j \leq a_{i0}, \ \ \ \quad
i=1,2,\dots,q,\\[4mm]
\ \ \ \ \ \ \ \ x_j\geq 0, \ \ \ \  \ \  \ \ j=1,2,\dots,n.
\end{array}%
\right. \label{eq31}
\end{equation}
It is easy to show that this problem can be represented as the
following disjoint bilinear programming problem:\\
\medskip \noindent\emph{Minimize}
\begin{equation}\label{equat3}
 z=\sum\limits_{j=1}^n \left(x_jy_j + (1-x_j)(1-y_j)
  \right)
\end{equation}
\noindent \emph{subject to}
\begin{equation}\label{equat4}
\left\{%
\begin{array}{c}
\sum\limits_{j=1}^n a_{ij}x_j  \le   a_{i0}, \ \ \ \ \ i=1,2,\dots,q, \\[4mm]
\ \ \ 0 \le x_j \le 1 , \ \ \ \ \ \ \ \ j=1,2,\dots,n;
\end{array}%
\right. \end{equation}
\begin{equation}\label{equat5}
 \ \ \ \ \ 0 \leq y_j \leq 1 , \ \ \quad \ \ \  j=1,2,\dots,n.
\end{equation}
 The
relationship between this bilinear programming problem and the
 problem of determining a boolean solution of system (\ref{eq31}) is the
following: system (\ref{eq31}) \ has a boolean solution
$x^*=(x_1^*,x_2^*,\dots,x_n^*)$ \ if and only if
$x^*=(x_1^*,x_2^*,\dots, x_n^*)$ and $y^*=(y_1^*,y_2^*,\dots,
y_n^*)$ with $y_j^*=1-x_j^*,  \ j=1,2,\dots, n,$ represents an
optimal solution of the disjoint bilinear programming problem
(\ref{equat3})-(\ref{equat5}) where $z(x^*,y^*)=0$.

 To disjoint bilinear programming problem (\ref{eq1})-(\ref{eq3}) also the following classical boolean linear programming problem  can
 be reduced:\\
\medskip
\noindent\emph{Minimize}
\begin{equation}
z=\sum_{j=1}^n c_j x_j   \label{eq4}
\end{equation}
\noindent \emph{subject to}
\begin{equation}
\left\{%
\arraycolsep 2pt
\begin{array}{rcl}
\sum\limits_{i=1}^n a_{ij}x_j & \le &  a_{i0}, \  \ \ \ \ \quad
i=1,2,\dots,q,
\\[4mm]
\ \ \ \ \ x_j & \in & \{ 0,1\}, \ \  \ \ \ j=1,2,\dots,n.
\end{array}%
\right. \label{eq5}
\end{equation}
This problem has an optimal boolean solution \
$x^*=(x_1^*,x_2^*,\dots,x_n^*)$ \ if and only if system (\ref{eq5})
has a feasible boolean solution. If coefficients $a_{ij}$ and $c_j$
of problem \ (\ref{eq4}), (\ref{eq5})\ are integer, then
$z^*=z(x^*)\in[-nH,nH]$, where \ $H=\max \{|c_i|,i=1,2,\dots,n\}$.
Therefore the optimal solution of boolean linear programming problem
(\ref{eq4}),  (\ref{eq5}) can be found by solving a sequence
of disjoint bilinear programming problems\\
\emph{Minimize}
$$ z=\sum\limits_{j=1}^n (x_jy_j +(1-x_j)(1-y_j)$$
\emph{subject to}
$$\left\{%
\arraycolsep 2pt
\begin{array}{lcl}
\sum\limits_{j=1}^n c_jx_j\leq t_k,\\[2mm]
\sum\limits_{i=1}^n a_{ij}x_j \leq a_{i0}, \ \ \ \quad
i=1,2,\dots,q,\\[2mm]
\ \ \ 0\leq x_j\leq 1, \ \ \ \  \ \  \ \ j=1,2,\dots,n; \ \ \ \\[2mm]
\ \ \ 0\leq y_j\leq 1, \ \ \ \ \ \ \ \  j=1,2,\dots,n,
\end{array}%
\right. $$ with integer parameters $t_k$ on $[-nH,nH]$ by applying
the bisection method with a standard integer roundoff procedure for
$t_k$.

Boolean programming problem (\ref{eq4}),  (\ref{eq5})
can be also  formulated as  the following disjoint bilinear programming problem:\\
\medskip \noindent\emph{Minimize}
\begin{equation}
\overline{z}=\sum\limits_{j=1}^n c_j x_j + M \sum\limits_{j=1}^n
\left(x_jy_j + (1-x_j)(1-y_j)
  \right) \label{eq6}
\end{equation}
\noindent \emph{subject to}
\begin{equation}
\left\{%
\begin{array}{c}
\sum\limits_{j=1}^n a_{ij}x_j  \le   a_{i0}, \ \ \ \ \ i=1,2,\dots,q, \\[1mm]
\ \ \ 0 \le x_j \le 1 , \ \ \ \ \ \ \ \  j=1,2,\dots,n;
\end{array}%
\right. \label{eq7}
\end{equation}
\begin{equation}
\ \ \ \ \   0 \le y_j \le 1, \ \ \ \ \ \ \ \  j=1,2,\dots,n,
\label{eq8}
\end{equation}
where $M$ is a suitable large value. More precisely, if the
coefficients in (\ref{eq4}), (\ref{eq5}) are integer, then $M$
should satisfy the condition $M\geq n\cdot 2^{3L+1}$, where $L$ is
the length of the binary encoding of  the coefficients of boolean
problem  (\ref{eq31}),  (\ref{eq4}).
 The relationship between boolean linear programming problem  \ (\ref{eq4}), (\ref{eq5}) \ and  disjoint bilinear programming problem
\ (\ref{eq6})-(\ref{eq8}) \ is the following: boolean linear
programming problem  \ (\ref{eq4}),\ (\ref{eq5})\ has an optimal
solution \ $x^*=(x_1^*,x_2^*,\dots, x_n*)$ \ if and only if \
$x^*=(x_1^*,x_2^*,\dots,x_n^*)$ \ and \ $y^*=(y_1^*,y_2^*,\dots,
y_n^*),$ where  $y_i^*=1-x_i^*,   i=1,2,\dots, n,$ represents a
solution of disjoint bilinear programming problem
(\ref{eq6})-(\ref{eq8}) and
$\overline{z}(x^*,y^*)=\sum_{j=1}^nc_jx^*.$ So, the boolean
programming problem can be reduced in polynomial time to disjoint
programming problem (\ref{eq1})-(\ref{eq3}) where the matrix $B$ is
identity one.

\medskip
Another important  problem which can be reduced to  disjoint
bilinear programming problem (\ref{eq1})-(\ref{eq3}) is the
following
piecewise linear concave programming problem:\\
\emph{Minimize}
\begin{equation}
z=\sum_{j=1}^l \min \{c^{jk}x + c_0^{jk}, \ \ \ \ \ \
k=1,2,\dots,m_j\}
  \label{eq9}
\end{equation}
\noindent\emph{subject to (\ref{eq2})}, \ where $x\in R^n, \
c^{jk}\in R^n, \ c_{0}^{jk} \in R^1$.

 This problem arises as an
auxiliary one when solving a class of resource allocation problems
\cite{l91}. In \cite{l91} it is shown that this problem can be
replaced by the following disjoint bilinear programming problem:\\
\emph{Minimize}
$$z=\sum_{j=1}^l \sum_{k=1}^{m_j}(c^{jk}x+ c_0^{jk})y_{jk}$$
\emph{subject to}
$$Ax \le a,  \ \ x \ge 0;$$
$$\left\{\begin{array}{l}
\sum\limits_{k=1}^{m_j} y_{jk} =1 , \ \ \ \ \ \  j=1,2,\dots,l;\\[4mm]
 \quad\ \   y_{jk}  \ge  0 , \ \ \ \ \ \
k=1,2,\dots,m_j, \ \ \ j=1,2,\dots,l.
\end{array}\right.$$
In this problem we can eliminate
$y_{1,m_1},y_{2,m_2},\dots,y_{l,m_l}$, taking into account that
$$y_{jm_j}=1- \sum\limits_{k=1}^{m_j-1} y_{jk}, \quad  \quad   j=1,2,\dots l, $$
and we obtain the following disjoint bilinear programming problem:\\
\emph{Minimize}
$$z=\sum_{j=1}^l \sum_{k=1}^{m_j-1}(c^{jk}-c^{jm_j})xy_{jk}+\sum_{j=1}^lc^{jm_j}x+\quad\quad\quad\quad\quad\quad\quad\quad\quad\quad\quad\quad\quad\quad\quad$$
\begin{equation}
\quad\quad\quad\quad\quad\quad\quad
\quad\quad\quad\quad\quad\quad\quad\quad \ +\sum_{j=1}^l
\sum_{k=1}^{m_j-1}(c_0^{jk}-c_0^{jm_j})y_{jk}+\sum_{j=1}^lc_0^{jm_j}
  \label{eq10}
\end{equation}
\emph{subject to}
\begin{equation}
Ax \le a,  \ \ x \ge 0; \label{eq11}
\end{equation}
\begin{equation}
\left\{\begin{array}{l}
\sum\limits_{k=1}^{m_j -1} y_{jk} \leq 1 , \ \ \ \ \ \  j=1,2,\dots,l;\\[4mm]
 \quad \  \  \   y_{jk}  \ge  0 , \ \ \ \ \ \
k=1,2,\dots,l, \ \ \ k=1,2,\dots,m_j-1,
\end{array}\right.\label{eq12}
\end{equation}
i.e. we obtain a special case of  disjoint bilinear programming
problem (\ref{eq1})-(\ref{eq3}) where the corresponding matrix $B$
is step-diagonal.

\medskip
Disjoint bilinear programming problem (\ref{eq1})-(\ref{eq3}) has
been extensively studied in \   \cite{aajs,alt,ahjs,dak,l91,l07,whi}
\ and some general methods and algorithms have been developed. In
this article we shall use a new optimization criterion that takes
into account the structure of the disjoint subset $Y$.

\medskip
It can be observed that in the presented above examples of disjoint
programming problems the  matrix $B$  is either identity one or step
diagonal. This means that the set of solutions $Y$  has the
structure of an acute-angled polyhedron \cite{andre,coxet}.
Acute-angled polyhedra are polyhedra in which all dihedral angles
are acute or right. A detailed characterization of such polyhedra
has been made by Coxeter \cite{coxet} and Andreev
\cite{andre,andr1}. Moreover, in \cite{andre,andr1} it has been
proven that  in an acute-angled polyhedron the hyperplanes of
nonadjacent facets cannot intersect. Based on this property in the
present  article we show that for a disjoint bilinear programming
problem with the structure of an acute-angled polytope for a
disjoint subset  an optimality criterion can be formulated that can
be efficiently used for  studying and solving the problem. In fact
we show that the formulated optimality criterion is valid not only
for the case when $Y$  is an acute-angled polytope but it holds also
for a more general class of polyhedra for which the hyperplanes of
nonadjacent facets do not intersect. In this article such polyhedra
are called \emph{perfect polyhedra}; we call the corresponding
disjoint bilinear programming problems with such a disjoint subset
\emph{disjoint bilinear programming problems with a perfect disjoint
subset}. The aim of this paper is to show that for this class of
problems an optimality criterion can be formulated that takes into
account the mentioned structure of the disjoint subset $Y$ and that
can be efficiently used for solving the problem. Moreover, based on
presented optimality criteria for these problem we ground a
polynomial algorithms for their solving. The optimization criterion
for disjoint bilinear programming with an acute-angled polytope for
a disjoint subset has been announced in \cite{l07}. The full proof
of this optimization criterion and the results concerned with its
application for solving the disjoint bilinear programming problems
with a perfect disjoint subset we present in this article.

\medskip The article is
organized as follows. In Section \ref{sectD} we present the
formulation of the disjoint bilinear programming problem with a
prefect disjoint subset that generalizes the problem with a disjoint
subset having the structure of an acute-angled polytope. In Sections
\ref{minmax}-\ref{ttt} we present the basic properties of the
optimal solutions of the  disjoint bilinear programming problem
(\ref{eq1})-(\ref{eq3}) in general. An important result of Section
\ref{ttt} represents  Theorem \ref{tm01}  that gives an optimization
criterion for problem (\ref{eq1})-(\ref{eq3}) that allows to ground
a new approach for its solving. The general scheme of this approach
is presented in Section \ref{sybsect1}. Section \ref{rdund1}
provides some necessary auxiliary results related to redundant
inequalities for systems of linear inequalities. In Section
\ref{trusc2}  we formulate and prove the optimality criterion for
the disjoint bilinear programming problem with a perfect disjoint
subset;  the main results of this section represents Theorems
\ref{thm101}, \ref{cor102} on the basis of which new algorithms are
proposed. Sections \ref{polalg}-\ref{finita} provide results
concerned with elaboration of a polynomial algorithm for solving the
disjoint bilinear programming problem with a perfect disjoint
subset. In Section \ref{finita1} is shown how the proposed algorithm
can be used for solving the boolean programming problem and the
piecewise linear concave programming problem.
\section{Disjoint bilinear programming
with a perfect disjoint subset}\label{sectD}Let us consider the
following disjoint bilinear programming
problem:\\
\emph{Minimize} \begin{equation}\label{eq08}z=x^TCy+g x+e
y\end{equation}
 \noindent\emph{subject to}
\begin{equation}\label{eq28} A x \le a,\ \ \ x \ge 0;
\end{equation}
\begin{equation}
D y \le d.\quad\quad\quad \ \  \label{eq38}
\end{equation}
This problem differs from problem \ (\ref{eq1})-(\ref{eq3}) \ only
by system (\ref{eq38}) in which  $\!D\!=\!(d_{ij})_{p\times m}$ and
$d^T\!\!=\!(d_{10},d_{20}\dots,d_{p0})\in\!R^p;$ in this problem $A,
C, a, g, e$ are the same as in problem (\ref{eq1})-(\ref{eq3}). We
call this problem  \emph{a disjoint bilinear programming problem
with perfect disjoint subset $Y$}  if system (\ref{eq38}) has a full
rank equal to $m$, where $m<p$, and this system possesses the
following properties:

\medskip\noindent $\ \ \ \    $a) \emph{system (\ref{eq38}) does not contain redundant inequalities and its solution set}

$\ \   $ \emph{$Y$ is a  bounded  set with nonempty interior;}

\medskip\noindent
$ \ \  \ \    $b) \emph{the set of solutions \ $Y'$ \ of an
arbitrary subsystem \ $D'y\leq d'$ \ of rank \ $m$}

$\ \   $ \emph{with $m$ \ inequalities of system \ (\ref{eq38})\
represents a convex cone \ with \ the}

$\ \ \    $\emph{origin at an extreme point \ $y'$  of the set of
solutions \ $Y$ of system \ (\ref{eq38}),}

$\ \  $\emph{ i.e. $y'$ is the solution of the system of equations \
$D'y= d'$;}

\medskip\noindent
$\ \ \      $ c) \emph{at each extreme point \ $y'$ \ of \ the \ set
of \ solutions \ $Y$ \ of \ system \ (\ref{eq38})}

$\ \  $ \emph{exactly $m$ hyperplanes of the facets of the polytope
$Y$ intersect};

\medskip\noindent
The main properties of perfect polyhedra  as well of the disjoint
programming problem with a perfect disjoint subset are studied in
Section \ref{trusc2}. Note that if $Y=\{y \big|\ Dy\leq d\}$ \ is a
nonempty set that has the structure of a nondegenerate acute-angled
polytope, then it satisfies conditions a)-c) above.
\section{Disjoint bilinear programming and the  min-max  linear  problem
 with interdependent variables}\label{minmax} Disjoint
bilinear programming problem (\ref{eq1})-(\ref{eq3}) is tightly
connected with the
 following min-max linear programming problem with interdependent
 variables:

\medskip\noindent \emph{To find}
\begin{equation}
   z^*=\min_{y\in Y}\max_{u\in U(y)}(ey-a^Tu)
  \label{neweq4}
\end{equation}
\emph{and}
\begin{equation}\label{neweq50}
y^*\in Y=\{y| \ By\leq b, \ y\geq 0\}
\end{equation}
 \emph{for which}
\begin{equation}\label{neweq5}
z^*=\max_{u\in U(y^*)}(ey^*-a^Tu),
\end{equation}\
\emph{where}
\begin{equation}\label{neweq6}
U(y)=\{u\in R^q |\ -A^Tu\leq Cy+ g^T, \ \ u\geq 0 \}.
\end{equation}

\medskip
\noindent The relationship between the solutions of  problem
(\ref{eq1})-(\ref{eq3}) and min-max problem
(\ref{neweq4})-(\ref{neweq6}) can be  obtained on the basis of the
following theorems.
\begin{theorem}\label{extrthm1}
If $(x^*,y^*)$ is an optimal solution of  problem
(\ref{eq1})-(\ref{eq3})  and  $z^*$ is the optimal value of the
objective function in this problem, then $z^*$ and  $y^*\in Y$
 represent a solution of min-max problem
(\ref{neweq4})-(\ref{neweq6}) and vice versa: if $z^*$ and  $y^*\in
Y$   represent a solution of min-max problem
(\ref{neweq4})-(\ref{neweq6}), then $z^*$ is the optimal value of
the objective function of problem (\ref{eq1})-(\ref{eq3})  and $y^*$
corresponds to an optimal point in this problem.
\end{theorem}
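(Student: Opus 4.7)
The plan is to reduce the claim to the strong duality theorem of linear programming, applied to the inner minimization over $x$ with $y$ held fixed, and then to read off the equivalence of both the optimal values and the optimizers directly.

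First I would fix $y \in Y$ and treat the bilinear objective (\ref{eq1}) as an affine function of $x$. Rewriting $x^{T} C y = (Cy)^{T} x$ and pulling the term $ey$ outside the inner minimum, I obtain the linear program
\[
 \min_{x}\{(g^{T} + Cy)^{T} x : A x \le a,\, x \ge 0\},
\]
whose Lagrangian dual, after adding back the constant $ey$, is precisely $\max_{u \in U(y)}(ey - a^{T} u)$, with $U(y)$ as defined in (\ref{neweq6}). Because $X$ is nonempty and bounded, the primal attains a finite optimum for every $y \in Y$, so strong LP duality gives the pointwise identity $\min_{x \in X}(x^{T} C y + gx + ey) = \max_{u \in U(y)}(ey - a^{T} u)$ for each $y \in Y$, and in particular $U(y)$ is nonempty.

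Next I would take the outer minimum over $y \in Y$ on both sides. Using $\min_{(x,y) \in X\times Y} = \min_{y \in Y}\min_{x \in X}$, this yields $z^{*} = \min_{y \in Y}\max_{u \in U(y)}(ey - a^{T} u)$, so the optimal value of the bilinear problem coincides with the optimal value (\ref{neweq4}) of the min-max problem. The correspondence of optimizers then follows immediately: in the forward direction, if $(x^{*}, y^{*})$ is optimal for (\ref{eq1})-(\ref{eq3}), then $x^{*}$ is optimal for the inner LP at $y^{*}$, so the inner maximum equals $z^{*}$ and $y^{*}$ attains the outer minimum; in the reverse direction, given $(z^{*}, y^{*})$ solving the min-max problem, LP solvability supplies an $x^{*} \in X$ realizing the inner primal value $z^{*}$, and the pair $(x^{*}, y^{*})$ is then feasible and optimal in (\ref{eq1})-(\ref{eq3}).

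I do not expect any serious obstacle here: the only substantive point is to verify that strong LP duality applies uniformly in $y$, which is immediate from the standing assumption that $X$ is nonempty and bounded. No minimax theorem or saddle-point argument on $Y$ is needed, since $Y$ enters the argument only as the outer feasible region over which the min-max is formed.
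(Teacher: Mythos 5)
Your proposal is correct and follows essentially the same route as the paper: decompose the joint minimization as $\min_{y\in Y}\min_{x\in X}$, replace the inner linear program in $x$ by its dual $\max_{u\in U(y)}(ey-a^Tu)$, and read off the min-max formulation. You are somewhat more explicit than the paper about why strong duality applies (nonemptiness and boundedness of $X$) and about the correspondence of optimizers, but the argument is the same.
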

\medskip\noindent
\begin{proof} The proof of the theorem is obtained from the following reduction procedure of  bilinear programming problem  (\ref{eq1})-(\ref{eq3}) to
min-max problem (\ref{neweq4})-(\ref{neweq6}). We represent disjoint
bilinear programming problem  (\ref{eq1})-(\ref{eq3}) as the problem
of determining
\begin{equation}\label{neweq01}
\psi_1(y^*)= \min_{y\in Y}\psi_1(y),
\end{equation}
 where
\begin{equation}
\left\{%
\begin{array}{ll}
   \psi_1(y)=\min\limits_{x\in X}(x^TCy+gx+ey),\\[3mm]
   \quad\quad\quad \quad X=\{x\in R^n|\ Ax\leq a, \ x\geq 0 \}.
\end{array}%
\right. \label{neeq1}
\end{equation}
If we replace  linear programming problem (\ref{neeq1}) with respect
to $x$ by the dual problem
   $$\psi_1(y)=\max_{u\in U(y)}(ey-a^Tu)$$
and after that we introduce this expression in (\ref{neweq01}), then
we obtain  min-max problem
(\ref{neweq4})-(\ref{neweq6}).
\end{proof}

Similarly we can prove the following result.
\begin{theorem}\label{extrthm2}
If $(x^*, y^*) \in X\times Y$ is an optimal solution of disjoint
bilinear programming problem (\ref{eq1})-(\ref{eq3}) and $z^*$ is
the minimal value of the objective function of this problem, then
$z^*$ and $x^*$ correspond to a solution of the following min-max
linear problem:\\
To find
\begin{equation}
    z^*=\min_{x\in X}\max_{v\in V(x)}(gx-b^Tv)
  \label{neweq1}
\end{equation}
and  \begin{equation}\label{neweq51} x^*\in X=\{x| \ Ax\leq a, \
x\geq 0\}
\end{equation} for which
\begin{equation}\label{neweq2}
z^*=\max_{v\in V(x^*)}(cx^*-b^Tv),
\end{equation}
where
\begin{equation}\label{neweq3}
V(x)=\{v\in R^l |\ -B^Tv \leq C^{T}x+e^T, \ \ v\geq 0\}.
\end{equation}
\end{theorem}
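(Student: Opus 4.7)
The plan is to prove Theorem \ref{extrthm2} by the symmetric counterpart of the reduction employed for Theorem \ref{extrthm1}: instead of minimizing the bilinear objective first over $x$ for each fixed $y$, I minimize first over $y$ for each fixed $x$, and then dualize the resulting inner linear program in $y$.

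Concretely, I would start by rewriting problem (\ref{eq1})-(\ref{eq3}) as
\[\psi_2(x^{*})=\min_{x\in X}\psi_2(x),\qquad \psi_2(x)=\min_{y\in Y}\bigl(x^{T}Cy+gx+ey\bigr),\]
with $Y=\{y\mid By\le b,\ y\ge 0\}$. For a fixed $x\in X$, the inner problem is a linear program in $y$ with objective coefficient row $x^{T}C+e$ (equivalently, column vector $C^{T}x+e^{T}$) plus the constant $gx$, subject to $By\le b,\ y\ge 0$. Because $Y$ is nonempty and bounded by the standing assumption, this LP attains a finite optimum, so strong LP duality applies.

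Next I would form the dual. The dual of $\min c^{T}y$ subject to $By\le b,\ y\ge 0$ is $\max\bigl(-b^{T}v\bigr)$ subject to $-B^{T}v\le c,\ v\ge 0$. Substituting $c=C^{T}x+e^{T}$, the dual feasible set coincides with $V(x)$ defined in (\ref{neweq3}), and hence
\[\psi_2(x)=gx+\max_{v\in V(x)}\bigl(-b^{T}v\bigr)=\max_{v\in V(x)}\bigl(gx-b^{T}v\bigr).\]
Taking the minimum over $x\in X$ then produces the min-max problem (\ref{neweq1})-(\ref{neweq3}). If $(x^{*},y^{*})$ is optimal for (\ref{eq1})-(\ref{eq3}) with value $z^{*}$, the identity $\psi_2(x^{*})=z^{*}$ together with the chain of equalities above gives both $x^{*}\in\arg\min_{x\in X}\max_{v\in V(x)}(gx-b^{T}v)$ and $z^{*}=\max_{v\in V(x^{*})}(gx^{*}-b^{T}v)$, which are precisely the conditions (\ref{neweq1}), (\ref{neweq51}), (\ref{neweq2}) demanded by the statement.

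Since the reasoning is exactly the $x\leftrightarrow y$ mirror of the proof of Theorem \ref{extrthm1}, no substantive obstacle is expected. The only technical point that warrants care is the transposition bookkeeping, specifically verifying that the coefficient vector $C^{T}x+e^{T}$ of the inner LP in $y$ slots correctly into the right-hand side of the dual inequality ``$-B^{T}v\le C^{T}x+e^{T}$'' appearing in the definition (\ref{neweq3}) of $V(x)$, so that primal feasibility of $y$ and dual feasibility of $v$ are paired consistently under LP duality.
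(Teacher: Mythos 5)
Your proposal is correct and is exactly the argument the paper intends: the paper proves Theorem \ref{extrthm1} by the decomposition $\psi_1(y)=\min_{x\in X}(x^TCy+gx+ey)$ followed by dualizing the inner LP in $x$, and then states Theorem \ref{extrthm2} with only the remark ``Similarly we can prove the following result,'' i.e.\ by the $x\leftrightarrow y$ mirror you carry out. Your duality bookkeeping (dual of $\min c^Ty$ s.t.\ $By\le b$, $y\ge 0$ being $\max(-b^Tv)$ s.t.\ $-B^Tv\le c$, $v\ge 0$, with $c=C^Tx+e^T$) matches the definition of $V(x)$ in (\ref{neweq3}), and your reading of $cx^*$ in (\ref{neweq2}) as $gx^*$ is the evident intent.
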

\begin{corollary}\label{minmax11}
If $(x^*, y^*) \in X\times Y$ is an optimal solution of the
following disjoint bilinear programming problem:\\
Minimize
$$z=x^TCy+g x+e y $$
subject to
$$A x \le a,\ \  x \ge 0, \ \ \  D y \le d,$$
 and $z^*$ is the minimal
value of the objective function of this problem, then $z^*$ and
$x^*$
correspond to a solution of the following min-max linear problem:\\
To find
$$z^*=\min_{x\in X}\max_{v\in V(x)}(gx-d^Tv)$$
 and
$$ x^*\in X=\{x| \ Ax\leq a, \ x\geq 0\}$$
for which $$z^*=\max_{v\in V(x^*)}(gx^*-d^Tv),$$ where $$V(x)=\{v\in
R^l |\ -D^Tv = C^{T}x+e^T, \ \ v\geq 0\}.$$
\end{corollary}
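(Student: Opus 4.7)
The plan is to follow the same reduction scheme used for Theorems \ref{extrthm1} and \ref{extrthm2}, but with the roles of $x$ and $y$ swapped in the same manner as in Theorem \ref{extrthm2}, and with careful attention to the duality rules for an unrestricted primal variable. Concretely, I would write the bilinear problem as
\begin{equation*}
\psi_2(x^*)=\min_{x\in X}\psi_2(x),\qquad \psi_2(x)=gx+\min_{Dy\le d}\bigl((C^Tx+e^T)^Ty\bigr),
\end{equation*}
where $X=\{x\mid Ax\le a,\;x\ge 0\}$. The outer minimum has the same shape as in the proof of Theorem~\ref{extrthm2}, so the whole task reduces to dualizing the inner LP in $y$ and then transcribing the result.

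The inner LP differs from the one in Theorem~\ref{extrthm2} in one essential way: the variable $y$ is unrestricted in sign, since the constraint block is only $Dy\le d$ (no $y\ge 0$). The second step of the plan is therefore to apply LP duality to the problem $\min\{(C^Tx+e^T)^Ty:\,Dy\le d\}$ with $y$ free. Introducing multipliers $v\ge 0$ for $Dy\le d$ and minimizing the Lagrangian over free $y$ forces $D^Tv+(C^Tx+e^T)=0$, i.e.\ the equality $-D^Tv=C^Tx+e^T$. Strong duality (applicable because $Y=\{y\mid Dy\le d\}$ is nonempty and bounded, so the primal LP has a finite optimum for every $x$) then gives
\begin{equation*}
\min_{Dy\le d}(C^Tx+e^T)^Ty\;=\;\max_{v\in V(x)}(-d^Tv),\qquad V(x)=\{v\ge 0:\,-D^Tv=C^Tx+e^T\}.
\end{equation*}

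The third step is to substitute this identity into $\psi_2(x)$, obtaining $\psi_2(x)=\max_{v\in V(x)}(gx-d^Tv)$, and then to push $\min_{x\in X}$ outside to arrive at
\begin{equation*}
z^*=\min_{x\in X}\max_{v\in V(x)}(gx-d^Tv),
\end{equation*}
together with the characterization of $x^*$ as an $x\in X$ achieving this value, exactly as in the conclusion of Theorem~\ref{extrthm2}. Because $(x^*,y^*)$ optimal in the bilinear problem means that $y^*$ attains the inner minimum at $x=x^*$, while $z^*$ equals both $\psi_2(x^*)$ and the maximum in the dual, the pair $(z^*,x^*)$ indeed solves the stated min-max problem; the converse direction follows by the same identity run backwards.

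The only genuine subtlety, and the place where I would be most careful, is the passage from ``$y$ is free'' to the equality constraint $-D^Tv=C^Tx+e^T$ in the definition of $V(x)$: this is precisely what distinguishes the corollary from Theorem~\ref{extrthm2}, where the presence of $y\ge 0$ produced the inequality $-B^Tv\le C^Tx+e^T$. Apart from this dualization detail, the argument is a direct transcription of the proof of Theorem~\ref{extrthm2}, so I would mostly refer back to it rather than repeat the bookkeeping.
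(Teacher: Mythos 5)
Your proposal is correct and follows exactly the route the paper intends: the paper gives no separate proof for this corollary, treating it as the direct analogue of Theorem \ref{extrthm2} (itself proved by the same inner-LP dualization as Theorem \ref{extrthm1}), and your decomposition $\psi_2(x)=gx+\min_{Dy\le d}(C^Tx+e^T)^Ty$ followed by dualizing the inner LP is that same argument. You also correctly identify the one point of difference — the free variable $y$ turning the dual inequality into the equality $-D^Tv=C^Tx+e^T$ — which is precisely why the paper's $V(x)$ is defined with an equality here.
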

Problems (\ref{neweq4})-(\ref{neweq6}) and
(\ref{neweq1})-(\ref{neweq3}) can be regarded as a couple of dual
min-max linear problems with interdependent variables. It is easy to
see that these min-max problems always have solutions if $X$ and $Y$
are nonempty and bounded sets.
\section{Estimation of the optimal value of the objective
function for the disjoint bilinear programming problem
(\ref{eq1})-(\ref {eq3})}\label{estim1}
 Let $L$
be the length of the input data of disjoint bilinear programming
problem (\ref{eq1})-(\ref{eq3})\ with integer coefficients of
matrices \ $C, A, B$ \ and of vectors \ $a, b, c, e$
\cite{gar,karp}, i.e. $$L=L_1+L_2+ (qm+q+m)(1+\log(H+1)),$$ where
$$L_1=
\sum_{i=0}^q\sum_{j=1}^n\log(|a_{ij}|+1)+\log{n(q+1)},$$
$$L_2=\sum_{i=1}^l\sum_{j=0}^m\log(|b_{ij}|+1)+\log{l(m+1)},$$
$$\quad \ \  H=\max\{|c_{ij}|,|g_i|,|e_j|, \ i=\overline{1,n},\
j=\overline{1,m}\}.$$ Then the following lemma holds.
\begin{lemma}\label{lem1}If   disjoint bilinear programming problem  (\ref{eq1})-(\ref{eq3})  with
 integer coefficients has optimal solutions, then the optimal value $z^*$
of the objective function (\ref{eq1}) is a rational number that can
be expressed by an irreducible fraction $\displaystyle\frac{M}{N}$
with integer $M$ and $N \ (|N|\geq 1)$, where \ $|M|$ \ and \ $|N|$\
do not exceed \ $2^L$.
\end{lemma}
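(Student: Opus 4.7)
The plan is to produce an optimal solution $(x^*,y^*)$ whose coordinates are tightly controlled rational numbers, and then read off the bound on $z^*=x^{*T}Cy^*+gx^*+ey^*$ from standard determinantal estimates. First I would observe that an optimal solution may always be chosen at a vertex of $X\times Y$. Indeed, if $(x^*,y^*)$ is optimal, then fixing $y=y^*$ reduces (\ref{eq1})--(\ref{eq3}) to the linear program of minimising $(Cy^*+g^T)^Tx+ey^*$ over the bounded polytope $X$, whose optimum is attained at a vertex $\tilde x$; the pair $(\tilde x,y^*)$ is still optimal. Running the same argument with $x$ now fixed to $\tilde x$ allows us also to replace $y^*$ by a vertex $\tilde y$ of $Y$. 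Hence we may assume $x^*$ is a vertex of $X$ and $y^*$ is a vertex of $Y$.

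Next I would apply Cramer's rule together with Hadamard's inequality to the defining active systems of these vertices. A vertex of $X=\{x\geq 0:Ax\leq a\}$ is the unique solution of an $n\times n$ nonsingular subsystem of $Ax\leq a$, $x\geq 0$; writing each coordinate as a ratio of determinants of integer matrices with entries of absolute value at most $\max_{ij}|a_{ij}|$, one obtains a common denominator $N_x$ and numerators $M^x_j$ with $|N_x|,|M^x_j|\le 2^{L_1}$. Symmetrically, $y^*_k=M^y_k/N_y$ with $|N_y|,|M^y_k|\le 2^{L_2}$. This is precisely the standard LP complexity estimate underlying Khachiyan's algorithm, and is exactly why $L_1$ and $L_2$ were defined as they were.

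Finally I would substitute these expressions into $z^*=x^{*T}Cy^*+gx^*+ey^*$ and clear denominators by multiplying by $N:=N_xN_y$. The result is an integer $M$ written as a sum of at most $nm+n+m$ terms, each the product of a coefficient from $C$, $g$ or $e$ (bounded by $H$ in absolute value) with the corresponding numerators $M^x_jM^y_k$, $M^x_jN_y$ or $N_xM^y_k$. Using $|N|\le 2^{L_1+L_2}$ and $|M|\le (nm+n+m)\,H\,2^{L_1+L_2}$, the definition of $L$ as $L_1+L_2+(qm+q+m)(1+\log(H+1))$ is precisely calibrated so that $|M|,|N|\le 2^L$; reducing $M/N$ to lowest terms only decreases these absolute values, proving the claim.

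The only genuine obstacle is the third step: one must check that the dimensional constants and the factor $H$ really are absorbed by the term $(qm+q+m)(1+\log(H+1))$ in the definition of $L$. Once the vertex structure of the optimum is in hand, the remaining bound is standard bit-complexity bookkeeping.
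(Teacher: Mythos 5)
Your proposal follows essentially the same route as the paper: reduce to an extreme point of $X\times Y$ (the paper cites the literature for this; you supply the short two-stage LP argument), express the vertex coordinates as ratios of integer determinants bounded via $L_1$ and $L_2$ (the paper invokes Khachiyan's Lemma~1, which gives the slightly sharper bounds $|M_i^1|,|N_0^1|\le 2^{L_1}/(n(q+1))$ — the factors $\log n(q+1)$ and $\log l(m+1)$ inside $L_1,L_2$ exist precisely to absorb the number of terms in the final sum), and then clear denominators and count. The argument is correct; the only caveat is that the final bookkeeping closes more cleanly with the paper's sharper determinant bounds than with the looser $2^{L_1}$, $2^{L_2}$ you state.
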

\begin{proof} If the optimal value of the objective function of problem (\ref{eq1})-(\ref{eq3})
exists, then this value is attained at an extreme point $(x',y')$ of
the polyhedral set $X\times Y$ determined by (\ref{eq2}),
(\ref{eq3}) where $x'\in X$ and $y'\in Y$ (see
\cite{aajs,alt,ahjs,dak}). Then according to Lemma 1 from \cite{k80}
(see also \cite{k82})  each component $x_i'$ of $x'$ is a rational
value and it can be expressed by a fraction of form \
$x'=\displaystyle\frac{M_i^1}{N_0^1}$ with integer $M_i^1$ and
$N_0^1$, where $M_i^1$ is a determinant of the extended matrix of
system (\ref{eq2}), $N_0^1$ is a nonzero determinant of matrix $A$
and $|M_i^1|, |N_0^1|\leq\displaystyle\frac{2^{L_1}}{n(q+1)}$;
 similarly each
component $y_j'$ of $y'$ is a rational value and it can be expressed
by a fraction of form \ $y'=\displaystyle\frac{M_j^2}{N_0^2}$
 with integer
$M_j^2$ and $N_0^2$, where $M_j$ is a determinant of the extended
matrix of system (\ref{eq3}), $N_0^2$ is a nonzero determinant of
matrix $B$ and
$|M_j^2|,|N_0^2|\leq\displaystyle\frac{2^{L_2}}{l(m+1)}$. Therefore
$$z^*=
\displaystyle\frac{1}{N_0^1N_0^2}\Big(\sum_{i=1}^q\sum_{j=1}^mc_{ij}M_i^1M_j^2+\sum_{i=1}^q
g_iM_i^1N_0^2+\sum_{j=1}^me_jM_j^2N_0^1\Big)$$ where
$$\big|N_0^1N_0^2|=\big|N_0^1\big|\big|N_0^2\big|\leq\displaystyle\frac{2^{L_1+L_2}}{n(q+1)l(m+1)}$$
and $$\Big|\sum_{i=1}^q\sum_{j=1}^mc_{ij}M_i^1M_j^2+\sum_{i=1}^q
g_iM_i^1N_0^2+\sum_{j=1}^me_jM_j^2N_0^1\Big|\leq$$
$$H\Big|\sum_{i=1}^q\sum_{j=1}^mM_i^1M_j^2+\sum_{i=1}^q
M_i^1N_0^2+\sum_{j=1}^mM_j^2N_0^1\Big|\leq$$
$$2^{\log(H+1)}\Big(2^{L_1+L_2}+2^{L_1+L_2}+2^{L_1+L_2}\Big)\leq 2^L.$$
So, the optimal value $z^*$ of the objective function (\ref{eq1}) is
a rational number  that can be represented  by a fraction
$\displaystyle\frac{M}{N}$ with integer $M$ and $N \ (|N|\geq 1)$,
where \ $|M|$ \ and \ $|N|$\ do not exceed \ $2^L$.
\end{proof}
\begin{remark}\label{mica}
A more suitable upper bound estimation for the optimal value of the
disjoint bilinear programming problem \ (\ref{eq1})-(\ref{eq3})\
when \ $X$ \ and \ $Y$ \ are nonempty and bounded sets may be the
value
\begin{equation}\label{nic1}M^1_u=\min_{x\in X}\max_{y\in Y}(x^TCy+g x+ey)=\max_{y\in
Y}\min_{x\in X}(x^TCy+g x+ey)\end{equation} or the value
\begin{equation}\label{nic2} M^2_u=\max_{x\in X}\min_{y\in Y}(x^TCy+g x+ey)=\min_{y\in Y}\max_{x\in X}(x^TCy+g
x+ey)\end{equation} that can be found in polynomial time because
problems (\ref{nic1}) and (\ref{nic2}) can be reduced to linear
programming problems similarly  as the problem of finding the value
of a matrix game. Therefore for the optimal value of the object
function of disjoint bilinear programming problem
(\ref{eq1})-(\ref{eq3}) we have the following estimation
$$-2^L\leq z^*\leq \min \{2^L,M^1_u, M^2_u\}.$$
\end{remark}
\section{An optimality criterion for disjoint bilinear
programming (\ref{eq1})-(\ref{eq3})}\label{ttt} Let us assume that
the optimal value of the objective function of disjoint bilinear
programming problem (\ref{eq1})-(\ref{eq3}) is bounded. Then problem
(\ref{eq1})-(\ref{eq3}) can be solved by varying the parameter \ $h
\in [-2^L,2^L]$ \ in the problem of determining the consistency
(compatibility) of the system
\begin{equation}
\left\{%
\begin{array}{ll}
A x \le a, \\
x^TCy+g x+e y \le h, \\
B y \le b, \\
x \ge 0, \ y \ge 0.
\end{array}%
\right. \label{eq13}
\end{equation}
In order to study the consistency problem for system (\ref{eq13}) we
will reduce it to a consistency problem for a system of linear
inequalities with a right-hand member depending on parameters using
the following results.
\begin{lemma}\label{thm0} Let solution sets $X$ and $Y$ of the corresponding systems
(\ref{eq2}) and (\ref{eq3}) be nonempty and bounded. Then system
(\ref{eq13}) for a given $h\in R^1$ has no solutions if and only if
the system of linear inequalities
\begin{equation}
\left\{%
\begin{array}{ll}
-{A}^T u \le Cy+g^T, \\
\quad     a^T u < e y-h,  \\
\quad \ \ \  u \ge 0, \\
\end{array}%
\right. \label{eq14}
\end{equation}
\noindent is consistent with respect to $u$ for every $y$ satisfying
(\ref{eq3}). \end{lemma}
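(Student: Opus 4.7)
The plan is to fix $y$ satisfying (\ref{eq3}) and reduce the existence of a compatible $x$ in (\ref{eq13}) to an LP feasibility problem that can be dualized. For such a fixed $y$, system (\ref{eq13}) becomes the question of finding $x \ge 0$ with $Ax \le a$ and $(Cy + g^T)^T x \le h - ey$. Since $X=\{x\ge 0:Ax\le a\}$ is assumed nonempty and bounded, the linear program $\min_{x\in X}(Cy + g^T)^T x$ attains a finite optimum, and a compatible $x$ for the chosen $y$ exists if and only if this optimum does not exceed $h - ey$.

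Next, I would apply strong LP duality. The dual of $\min(Cy + g^T)^T x$ subject to $Ax \le a$, $x \ge 0$ is $\max(-a^T u)$ subject to $-A^T u \le Cy + g^T$, $u \ge 0$, whose feasible set is precisely the polyhedron $U(y)$ introduced in (\ref{neweq6}). Finiteness of the primal optimum gives $\min_{x\in X}(Cy + g^T)^T x = \max_{u\in U(y)}(-a^T u)$. Hence the existence of a compatible $x$ for this particular $y$ is equivalent to $-a^T u \le h - ey$ for every $u \in U(y)$, i.e.\ $a^T u \ge ey - h$ for every $u \in U(y)$. Contrapositively, the non-existence of any compatible $x$ for this $y$ is equivalent to the existence of some $u \ge 0$ with $-A^T u \le Cy + g^T$ and $a^T u < ey - h$, which is exactly the consistency of (\ref{eq14}) at $y$. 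Quantifying over $y$ then yields the lemma: (\ref{eq13}) has no solution if and only if for every $y$ satisfying (\ref{eq3}) there is no compatible $x$, if and only if (\ref{eq14}) is consistent in $u$ for every such $y$.

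The main obstacle I anticipate is purely bookkeeping: getting the signs and inequality directions correct so that the dual constraints come out exactly as $-A^T u \le Cy + g^T$ (matching (\ref{neweq6}) and the first inequality of (\ref{eq14})), and correctly tracking the switch from the weak bound $\le h$ in (\ref{eq13}) to the strict inequality $a^T u < ey - h$ in (\ref{eq14}). The strictness there is essential: if $\max_{u\in U(y)}(-a^T u) > h - ey$, then some $u \in U(y)$ strictly realizes $-a^T u > h - ey$, which is equivalent to $a^T u < ey - h$. Aside from strong LP duality itself, the only hypotheses used are the nonemptiness and boundedness of $X$, precisely what the lemma assumes.
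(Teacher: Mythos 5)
Your proof is correct and follows essentially the same route as the paper: fix $y\in Y$, observe that (\ref{eq13}) reduces to a linear feasibility problem in $x$ over the nonempty bounded set $X$, and dualize. The only cosmetic difference is that the paper invokes a theorem of the alternative on the inconsistent system (obtaining a homogenized system in $(\lambda,t)$ and then ruling out $t=0$ via the nonemptiness of $X$), whereas you apply strong LP duality to the value $\min_{x\in X}(Cy+g^T)^Tx$ and compare it with the threshold $h-ey$; both uses of the hypotheses on $X$ are equivalent and your handling of the strict inequality in (\ref{eq14}) is correct.
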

\begin{proof} System (\ref{eq13})
has no solutions if and only if for every $y \in Y$ the system of
linear inequalities
\begin{equation}
\left\{%
\begin{array}{ll}
    A x \le a,\\
    x^T (Cy+g^T) \le h-e y,  \\
    x \ge 0, \\
\end{array}%
\right. \label{eq15}
\end{equation}
has no solutions with respect to $x$. If we apply the duality
principle (Theorem 2.14 from \cite{che}) for  system (\ref{eq15})
with respect to vector of variables $x$,  then we obtain that it is
inconsistent if and only if the system of linear inequalities
\begin{equation}
\left\{%
\begin{array}{ll}
{A}^T \lambda + (Cy+g^T) t \ge 0, \\
\  a^T \lambda + (h-e y) t < 0,  \\
\ \ \ \   \lambda \ge 0, \ t \ge 0, \\
\end{array}%
\right. \label{eq16}
\end{equation}
has solutions with respect to $\lambda$ and $t$ for every $y \in Y$.
Note that for an arbitrary solution $(\lambda^*,t^*)$ of system
(\ref{eq16}) the condition $t^*>0$ holds. Indeed, if $t^*=0$, then
it means that the system
\[
\left\{%
\begin{array}{ll}
    {A}^T \lambda \ge 0, \\
   \  a^T \lambda < 0,\\
   \ \ \ \ \lambda \ge 0, \\
\end{array}%
\right.
\]
has solutions, which, according to Theorem 2.14 from \cite{che},
involves the inconsistency of system (\ref{eq2}) that is contrary to
the initial assumption. Consequently, $t^*>0$. Since $t>0$ in
(\ref{eq16}) for every $y \in Y$, then, dividing each inequality of
this system by $t$ and denoting $u=(1/t)\lambda$, we obtain  system
(\ref{eq14}). So, system (\ref{eq13}) is inconsistent if and only if
system (\ref{eq14}) is consistent with respect to $u$ for every $y$
satisfying (\ref{eq3}).
\end{proof}
\begin{corollary}\label{corol01} Let solution sets $X$ and $Y$ of the corresponding systems
(\ref{eq2}) and (\ref{eq3}) be nonempty and bounded. Then for a
given $h$ system (\ref{eq13}) has solutions if and only if there
exists $y\in Y$ for which system (\ref{eq14})  is inconsistent with
respect to $u$. The minimal value $h^*$ of parameter $h$  for which
such a property  holds is equal to the optimal value of the
objective function in disjoint bilinear programming \ problem
(\ref{eq1})-(\ref{eq3}), i.e. in this \ case \ for $h=h^*$ there
exists \ $y=y^*\in Y$ \ such that system (\ref{eq14}) is
inconsistent with respect to $u$.
\end{corollary}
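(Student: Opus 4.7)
The plan is to derive the corollary almost mechanically from Lemma~\ref{thm0}, with the only additional ingredient being the observation that the minimum of the bilinear form on $X\times Y$ is attained.

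First I would handle the equivalence. Lemma~\ref{thm0} states that, for fixed $h$, system (\ref{eq13}) is \emph{inconsistent} if and only if system (\ref{eq14}) is consistent in $u$ for \emph{every} $y$ satisfying (\ref{eq3}). Taking the contrapositive of both sides yields exactly the first claim: (\ref{eq13}) has a solution if and only if there exists at least one $y\in Y$ for which (\ref{eq14}) is inconsistent in $u$.

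Next I would identify the threshold value $h^*$. The key observation is that (\ref{eq13}) is nothing but the condition that some $(x,y)\in X\times Y$ satisfies $x^T C y + g x + e y \le h$. Under the hypotheses, $X\times Y$ is a nonempty compact set and the bilinear functional $\varphi(x,y) = x^T C y + g x + e y$ is continuous, so $\varphi$ attains its minimum on $X\times Y$. Denoting this minimum by $z^*$, the statement "system (\ref{eq13}) is consistent" is equivalent to $h\ge z^*$. Hence the minimal $h$ for which (\ref{eq13}) admits a solution is exactly $h^* = z^*$, the optimal value of the objective function of (\ref{eq1})-(\ref{eq3}).

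Finally, applying the first part of the corollary with $h=h^*=z^*$ gives the promised $y^*\in Y$ at which system (\ref{eq14}) is inconsistent in $u$; one may in fact take $y^*$ to be the second component of any optimal pair $(x^*,y^*)$ of (\ref{eq1})-(\ref{eq3}), since then (\ref{eq15}) is solvable in $x$ for this $y^*$ (witnessed by $x^*$), and the duality argument used inside the proof of Lemma~\ref{thm0} forces (\ref{eq14}) to be inconsistent. There is no real obstacle here; the only point that deserves explicit mention is the compactness argument that guarantees attainment of the minimum, which is what legitimizes passing from an infimum to an actual minimal value $h^*$.
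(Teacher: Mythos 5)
Your proposal is correct and is essentially the argument the paper intends: the paper states this corollary without proof as an immediate consequence of Lemma~\ref{thm0}, and your derivation (contrapositive of the lemma for the equivalence, plus compactness of $X\times Y$ and continuity of the bilinear form to guarantee the minimum $h^*=z^*$ is attained) fills in exactly the omitted details. The closing observation that $y^*$ may be taken from an optimal pair, justified via the solvability of (\ref{eq15}) and the duality step inside the lemma's proof, is also sound.
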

\begin{theorem}\label{tm01} Let solutions sets $X$ and $Y$ of the corresponding systems
(\ref{eq2}) and (\ref{eq3}) be nonempty and bounded. Then for a
given $h\in R^1$ the system
\begin{equation}
\left\{%
\begin{array}{ll}
A x \le a, \\
x^TCy+g x+e y < h, \\
B y \le b, \\
x \ge 0, \ y \ge 0.
\end{array}%
\right. \label{eq013}
\end{equation}
is inconsistent if and only if the  system of linear inequalities
\begin{equation}
\left\{%
\begin{array}{ll}
    -{A}^T u \le Cy+g^T, \\
 \quad   a^T u \le e y-h,  \\
 \quad \ \ \   u \ge 0, \\
\end{array}%
\right. \label{eq17}
\end{equation}
is consistent with respect to $u$ for every $y$ satisfying
(\ref{eq3}). The maximal value $h^*$ of the parameter $h$ for which
system (\ref{eq17}) is consistent with respect to $u$ for every
$y\in Y$ is equal to the minimal value of the objective function of
disjoint bilinear programming problem (\ref{eq1})-(\ref{eq3}).
Moreover,  for the considered systems the following properties hold:

\medskip
1) If the  system of linear inequalities
\begin{equation}
\left\{%
\begin{array}{ll}
-{A}^T u - Cy\ \le \ g^T, \\
\quad    a^Tu - \ e y \ < -h^*,  \\
\quad \ \ \  u \ge 0, \\
\end{array}%
\right. \label{eq140}
\end{equation}
is inconsistent with respect to $u$ and $y$, then the system
\begin{equation}
\left\{%
\begin{array}{ll}
    \ \ Ax  \le a, \\
    C^Tx=-e^T ,  \\
  \ \ gx = h^*,\\
 \quad \ x  \ge 0, \\
\end{array}%
\right. \label{eq171}
\end{equation}
is consistent and an arbitrary solution to it $x^*$ \ together with
an arbitrary \ $y\in Y$ \ determine a solution $(x^*,y)$ of disjoint
bilinear programming problem (\ref{eq1})-(\ref{eq3}).

\medskip
2) If system (\ref{eq140}) is consistent with respect to $u$ and
$y$, then there exists $y^*\in Y$ for which the system
\begin{equation}
\left\{%
\begin{array}{ll}
    -{A}^T u \le Cy^*+g^T, \\
 \quad \ \  a^T u < e y^*-h^*,  \\
 \quad \ \ \   u \ge 0, \\
\end{array}%
\right. \label{eq170}
\end{equation}
has no solutions with respect to $u$. An arbitrary $y^*\in Y$ with
such a property  together with a solution $x^*$ of the system of
linear inequalities

\begin{equation}\label{pbeda}
\left\{%
\begin{array}{ll}
    A x \le a,\\
    x ^T(Cy^*+g^T) \le h^*-e y^*,  \\
    x \ge 0, \\
\end{array}%
\right.
\end{equation}
with respect to $x$, represent an optimal solution ($x^*,y^*$) for
disjoint bilinear programming problem (\ref{eq1})-(\ref{eq3}).
M<oreover, $h^*$ represents the maximal value of parameter $h$ for
which system (\ref{eq17}) is consistent with respect to $u$ for
every $y\in Y$ and at the same time $h^*$ represents the minimal
value of parameter $h$ in system (\ref{eq14}) for which there exists
$y=y^*$  such that system (\ref{eq14}) is inconsistent with respect
to $u$.
\end{theorem}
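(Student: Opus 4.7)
The plan is to proceed in three stages that mirror the structure of the theorem: the duality equivalence with $h$ fixed, the identification of the threshold $h^*$, and finally the case distinction 1)--2).

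For the first assertion, I would mimic the proof of Lemma~\ref{thm0}, only with the strict inequality now sitting in (\ref{eq013}) and the non-strict inequality in (\ref{eq17}). Fixing $y$ satisfying (\ref{eq3}), the subsystem $Ax \le a$, $x^T(Cy+g^T) < h-ey$, $x \ge 0$ is inconsistent iff $\min\{x^T(Cy+g^T) : Ax \le a,\ x\ge 0\} \ge h-ey$; since $X$ is nonempty and bounded, LP strong duality rewrites this minimum as $\max\{-a^Tu : -A^Tu \le Cy+g^T,\ u\ge 0\}$, so the condition becomes exactly the consistency of (\ref{eq17}) at $y$. Aggregating over $y \in Y$ yields the ``iff'' of the theorem. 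From this, the maximal $h$ for which (\ref{eq17}) holds universally in $y \in Y$ is the largest $h$ such that no $(x,y) \in X\times Y$ satisfies $z(x,y) < h$; because $X\times Y$ is compact and $z$ is continuous, this equals $\min z$, i.e., $h^*$ coincides with the optimal value of the bilinear problem.

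For case 1), I would introduce the linear program $\sup\{ey - a^Tu : -A^Tu - Cy \le g^T,\ u \ge 0,\ y \text{ free}\}$. Inconsistency of (\ref{eq140}) says precisely that this sup is $\le h^*$, while selecting any minimizer $y^* \in Y$ of $f(y) := \min_x z(x,y)$ (which is concave and continuous on the compact set $Y$, since it is the minimum over the compact $X$ of functions linear in $y$, so it attains its minimum value $h^*$) together with a $u^*$ realizing $f(y^*)$ via LP duality shows the sup is $\ge h^*$. So the sup equals $h^*$ and is attained. Dualizing, with $u \ge 0$ producing $Ax \le a$ and the free variable $y$ producing the equality $C^Tx = -e^T$, gives $\min\{gx : Ax \le a,\ C^Tx = -e^T,\ x \ge 0\}$ with optimum $h^*$, so an optimal $x^*$ solves (\ref{eq171}). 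From $C^Tx^* = -e^T$ we get $(x^*)^TCy = -ey$, hence $z(x^*,y) = gx^* = h^*$ for every $y \in Y$, making $(x^*,y)$ optimal no matter which $y \in Y$ is chosen.

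For case 2), it suffices to take any minimizer $y^* \in Y$ of $f$; at $y^*$ the LP defining $f(y^*) = h^*$ attains its maximum, so no $u \ge 0$ with $-A^Tu \le Cy^*+g^T$ satisfies $a^Tu < ey^* - h^*$, i.e., (\ref{eq170}) has no solution. Any $x^*$ attaining $\min_x z(x,y^*) = h^*$ is then a solution of (\ref{pbeda}), and $(x^*,y^*)$ is globally optimal. The concluding remark that $h^*$ is also the minimal value of $h$ in (\ref{eq14}) for which some $y \in Y$ makes (\ref{eq14}) inconsistent is a restatement of Corollary~\ref{corol01}. The most delicate step is case~1): setting up the LP dual carefully so that the free variable $y$ produces the equality $C^Tx = -e^T$, and verifying both feasibility (which follows from nonemptiness of $Y$) and boundedness (which follows from the inconsistency hypothesis on (\ref{eq140})) of the primal LP so that strong duality can be invoked.
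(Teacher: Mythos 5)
Your proposal is correct, and its overall skeleton — reduce (\ref{eq013}) to a per-$y$ feasibility question, dualize in $x$, identify $h^*$ with the optimal value, then split on the consistency of (\ref{eq140}) — is the same as the paper's. The difference is in the duality machinery: the paper homogenizes each system (using boundedness of $X$ to justify setting the homogenizing variable to $1$) and invokes Chernikov's alternative theorem for inconsistent systems, whereas you apply strong LP duality directly to the value functions $\min_x x^T(Cy+g^T)$ and $\sup\{ey-a^Tu\}$. The two are interchangeable here, but your treatment of case 1) is the more self-contained of the two: the paper's step ``the inconsistency of the homogenized (\ref{eq140}) involves that system (\ref{eq171}) has solutions'' is stated without detail (and the fact that the constraint $gx=h^*$ in (\ref{eq171}) is attainable really rests on Corollaries \ref{corr00} and \ref{dimop}, which appear only after the theorem), while your explicit primal--dual pair $\sup\{ey-a^Tu\}=\min\{gx: Ax\le a,\ C^Tx=-e^T,\ x\ge 0\}=h^*$ delivers both the consistency of (\ref{eq171}) and the value $gx^*=h^*$ in one stroke, with the feasibility/boundedness hypotheses for strong duality checked where the paper leaves them implicit. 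Your case 2) follows the same path as the paper's (via Corollary \ref{corol01} and the attainment of $\min_{y\in Y}\max_{u\in U(y)}(ey-a^Tu)$), so no further comment is needed there.
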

\begin{proof} System (\ref{eq013}) is inconsistent if and only if
the system
\begin{equation}
\left\{%
\begin{array}{ll}
    A x \le a,\\
    x ^T(Cy+g^T) < h-e y,  \\
    x \ge 0, \\
\end{array}%
\right. \label{eq015}
\end{equation}
is inconsistent with respect to $x$ for every $y\in Y$. Taking into
account that the set of solutions of system (\ref{eq2}) is nonempty
and bounded we can replace (\ref{eq015}) by the following
homogeneous system
\begin{equation}
\left\{%
\begin{array}{ll}
    A x -at\le 0,\\
    x^T (Cy+g^T) -(ey-h)t< 0,  \\
    x \ge 0, t\geq 0, \\
\end{array}%
\right. \label{eq016}
\end{equation}
preserving the inconsistency property with respect to $x$ and $t$
for every $y\in Y$. Therefore  system (\ref{eq013}) is inconsistent
if and only if system (\ref{eq016}) is inconsistent with respect to
$x$ and $t$ for every $y\in Y$. Applying the duality principle for
system (\ref{eq016}) we obtain that it is inconsistent with respect
to $x$ and $t$ for every $y\in Y$ if and only if system (\ref{eq17})
is consistent with respect to $u$ for every $y$ satisfying
(\ref{eq3}).  Based on this property we may conclude that the
maximal value $h^*$ of parameter $h$ in system (\ref{eq17}) for
which it has solutions with respect to $u$  for every $y\in Y$ is
equal to the minimal value of the objective function in problem
(\ref{eq1})-(\ref{eq3}). At the same time $h^*$, according to
Corollary \ref{corol01}, represents  the minimal value of parameter
$h$ in system (\ref{eq14}) for which there exists $y=y^*$  such that
system (\ref{eq14}) is inconsistent with respect to $u$.

Now let us prove property 1) from the theorem. Assume that system
(\ref{eq140}) is inconsistent with respect to $u$ and $y$. Then the
system
$$\left\{%
\begin{array}{ll}
-{A}^T u - Cy \ - g^Tt \leq 0, \\
\quad \ \  a^T u - e y  + h^*t<0,  \\
\quad \ \ \  u \ge 0, \ \ \ \ \ \ \ \ \ t\geq 0,\\
\end{array}%
\right.$$ is inconsistent. This  involves that system (\ref{eq171})
has solutions. Then for a solution $x^*$ of system (\ref{eq171}) we
have
$${x^*}^TCy+gx^*+ey= y^T(C^Tx^*+e^T)+ gx^*= gx^*=h^*,$$
where $h^*$ is the minimal value of the objective function in
problem (\ref{eq1})-(\ref{eq3}). This means that $x^*$ together with
an arbitrary $y\in Y$ determine an optimal solution $(x^*,y)$ of
problem (\ref{eq1})-(\ref{eq3}). Moreover, in this case the optimal
value $h^*$ of the objective function of the problem does not depend
on the constraints (\ref{eq3}) that define $Y$, i.e. $Y$ may be an
arbitrary subset from $R^m$.

The proof of property 2) of this theorem can be derived from
Corollary \ref{corol01} of Lemma \ref{thm0} and presented above
properties of solutions of system (\ref{eq17}). Indeed, if $h^*$ is
the maximal value of parameter $h$ for which system (\ref{eq17}) has
solutions with respect to $u$ for every $y\in Y$, then according to
Corollary \ref{corol01},  $h^*$ can be treated as the minimal value
of parameter $h$ in (\ref{eq14}) for which there exists $y=y^*\in Y$
such that system (\ref{eq170}) has no solutions with respect to $u$.
This means that system (\ref{eq170}) is inconsistent and the
corresponding homogeneous system
$$\left\{%
\begin{array}{ll}
    -{A}^T u - (Cy^*+g^T)t \ \le 0, \\
 \quad \ \  a u - (e y^*-h^*)t<0;  \\
 \quad \ \ \   u \ge 0,\ \ \ \ \ \ \ \ \ \ \ t\geq 0,\\
\end{array}%
\right. $$ with respect to $u$ and $t$ has no solutions. By applying
duality principle to this system we obtain that  system
(\ref{pbeda}) has solutions with respect to $x$. This means  that
$y^*\in Y$ together with a solution $x^*$ of system (\ref{pbeda})
determine an optimal solution $(x^*,y^*)$ of problem
(\ref{eq1})-(\ref{eq3}).
\end{proof}
\begin{corollary}\label{corr00}
The
linear programming problem:\\
Maximize
 \begin{equation}\label{cr}
 z=h
 \end{equation}
 subject to
\begin{equation}\label{cr1}
\left\{%
\begin{array}{ll}
-{A}^T u - Cy\le g^T, \\
\quad \  \ a u - e y \leq -h,  \\
\quad \ \ \  u \ge 0, \\
\end{array}%
\right. \end{equation}
has solutions  if and only if the  linear programming problem :\\
Minimize
\begin{equation}\label{crd}
 z'=gx
 \end{equation}
subject to
\begin{equation}
\left\{%
\begin{array}{ll}
    \ \ Ax  \le a, \\
    C^Tx=-e^T ,  \\
   \quad \ x  \ge 0, \\
\end{array}%
\right. \label{eq1710}
\end{equation}
has solutions. If $h^*$ is the optimal value of the objective
function of linear programming problem (\ref{cr}),(\ref{cr1}), then
an optimal solution $x^*$ of linear programming problem (\ref{crd}),
(\ref{eq1710}) together with an arbitrary $y\in Y$  is an optimal
solution of the disjoint bilinear  programming  problem  with an
arbitrary subset $Y\in R^m$.
\end{corollary}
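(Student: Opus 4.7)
The plan is to identify $(\ref{cr})$--$(\ref{cr1})$ and $(\ref{crd})$--$(\ref{eq1710})$ as an LP primal-dual pair, derive the ``if and only if'' from strong duality, and then read the final assertion off Theorem~\ref{tm01}(1).

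\emph{Step 1 (duality pairing).} First I would view $(\ref{cr})$--$(\ref{cr1})$ as a primal LP with decision variables $u\ge 0\in R^q$, $y\in R^m$ free, $h\in R^1$ free, and objective $\max h$. Rewriting the scalar constraint as $a^Tu-ey+h\le 0$ and attaching dual multipliers $x\ge 0\in R^n$ to the block $-A^Tu-Cy\le g^T$ and $t\ge 0\in R^1$ to the scalar constraint, the standard recipe --- nonnegative primal variable yields inequality dual constraint, free primal variable yields equality dual constraint --- produces the dual $\min gx$ subject to $Ax\le at$, $C^Tx=-e^Tt$, $t=1$, $x\ge 0$. Substituting $t=1$ collapses this to exactly $(\ref{crd})$--$(\ref{eq1710})$.

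\emph{Step 2 (biconditional).} With the primal-dual pairing established, ``one LP admits an optimal solution iff the other does, with common optimal value $h^*$'' is an immediate consequence of the strong duality theorem: feasibility plus finite optimum on either side forces the same on the other, and the two optima coincide.

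\emph{Step 3 (optimality in the bilinear problem).} Finally I would invoke Theorem~\ref{tm01}(1). The attainment of the maximum $h=h^*$ in $(\ref{cr})$--$(\ref{cr1})$ is equivalent to the inconsistency of the strict system $(\ref{eq140})$: any $(u,y)$ with $-A^Tu-Cy\le g^T$ and $a^Tu-ey<-h^*$ would, via the choice $h:=ey-a^Tu>h^*$, yield a feasible triple of the LP with objective value exceeding $h^*$. Theorem~\ref{tm01}(1) then asserts that $(\ref{eq171})$ is consistent and that any solution $x^*$ of $(\ref{eq171})$, paired with an arbitrary $y$, is optimal for the bilinear problem $(\ref{eq1})$--$(\ref{eq3})$; its proof even records that in this case $h^*$ is independent of the constraints defining $Y$, which justifies the phrase ``with an arbitrary subset $Y\subset R^m$''. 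An optimal $x^*$ of $(\ref{crd})$--$(\ref{eq1710})$ automatically satisfies $(\ref{eq171})$ because $gx^*=h^*$ at the optimum, so the claim follows.

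The only delicate point is Step~1: the sign bookkeeping and the free/nonnegative variable handling when dualizing a mixed-type LP. Once the pairing is confirmed, Steps~2 and~3 reduce to standard LP duality and a direct citation of Theorem~\ref{tm01}(1), respectively.
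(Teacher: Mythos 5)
Your proposal is correct and follows essentially the same route as the paper: the paper's entire proof consists of dualizing the LP \eqref{cr}, \eqref{cr1} (with multipliers $x\ge 0$ and $t$ for the scalar row, then setting $t=1$) to obtain \eqref{crd}, \eqref{eq1710}, exactly as in your Step 1, and it leaves the equivalence of ``has solutions'' and the final optimality claim to LP duality and Theorem~\ref{tm01}(1) just as you do. Your Step 3 merely makes explicit the link to the inconsistency of \eqref{eq140} that the paper treats as implicit in the word ``Corollary''.
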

\begin{proof} If we dualize linear programming (\ref{cr}), (\ref{cr1} ), then we obtain the problem:\\
\emph{Minimize:}
$$ z'=gx$$
\emph{ subject to }
 $$\left\{%
\begin{array}{ll}
    \ \ -Ax  + \ at \ge 0; \\
    -C^Tx-e^Tt\ge 0 ;  \\
   \quad\quad\quad\quad\quad t=1;\\
   \quad \ x  \ge 0,
\end{array}%
\right.
 $$
 i.e. this problem is equivalent to problem (\ref{crd}), (\ref{eq1710}). So,  problem (\ref{cr}), (\ref{cr1}) has solutions if and only if
 problem (\ref{crd}), (\ref{eq1710}) has solutions.
\end{proof}

If in Lemma \ref{thm0} we take into account that  the set of
solutions \ $X$ \ of system  (\ref{eq2}) is  nonempty and bounded,
then  we obtain the following result.
\begin{corollary}\label{dimop}
 Linear programming problem (\ref{cr}), (\ref{cr1}) has solutions if and only if system (\ref{eq1710}) is consistent.
  If system  (\ref{eq1710}) is inconsistent then the objective function  (\ref{cr})  is unbounded on the set of feasible solutions  (\ref{cr1}).
 \end{corollary}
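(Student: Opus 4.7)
The plan is to leverage Corollary \ref{corr00}, which already equates the existence of a solution of LP (\ref{cr}),(\ref{cr1}) with the existence of a solution of LP (\ref{crd}),(\ref{eq1710}). What remains is to upgrade that equivalence from \emph{existence of an optimum of (\ref{crd}),(\ref{eq1710})} to \emph{mere consistency of (\ref{eq1710})}, and then to classify the behavior of (\ref{cr}),(\ref{cr1}) in the inconsistent case.

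First I would observe that the feasible set of LP (\ref{crd}),(\ref{eq1710}) is $\{x : Ax\le a,\ C^{T}x=-e^{T},\ x\ge 0\}$, which is contained in the polytope $X=\{x : Ax\le a,\ x\ge 0\}$. By hypothesis $X$ is nonempty and bounded, so this subset is itself a bounded polyhedron. Consequently, if it is nonempty then the linear objective $gx$ attains its minimum on it (a continuous function on a compact set), giving an optimal solution, while if it is empty there is trivially no optimum. Combining this dichotomy with Corollary \ref{corr00} proves the first equivalence of the statement.

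For the second claim, suppose (\ref{eq1710}) is inconsistent. By the equivalence just established, LP (\ref{cr}),(\ref{cr1}) has no optimum. Since every linear program is either infeasible, unbounded, or admits an optimum, it suffices to exhibit one feasible triple $(u,y,h)$ of (\ref{cr1}). I would fix an arbitrary $y_{0}\in R^{m}$ and reduce feasibility to producing some $u\ge 0$ satisfying $A^{T}u\ge -g^{T}-Cy_{0}$, after which any sufficiently small $h$ (namely any $h\le ey_{0}-a^{T}u$) completes the triple. By Farkas' lemma, such a $u$ fails to exist only if there is $z\ge 0$ with $Az\le 0$ and $(-g^{T}-Cy_{0})^{T}z>0$. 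But the boundedness of $X$ forces the recession cone $\{z\ge 0:Az\le 0\}$ to coincide with $\{0\}$, which rules this alternative out. Hence (\ref{cr1}) is nonempty, and therefore LP (\ref{cr}),(\ref{cr1}) must be unbounded.

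The only delicate step is the Farkas-type argument in the last paragraph; once the recession cone of the bounded polytope $X$ is identified with $\{0\}$, the conclusion follows immediately from the standard primal/dual trichotomy for linear programs.
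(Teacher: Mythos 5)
Your proof is correct, and it follows the route the paper itself gestures at: the paper states this corollary without proof, remarking only that it follows from the boundedness of $X$, and your argument supplies exactly the missing details. The two substantive points you add are (i) that the feasible set of (\ref{crd}), (\ref{eq1710}) sits inside the compact set $X$, so ``has an optimal solution'' and ``is consistent'' coincide for that problem, which upgrades Corollary \ref{corr00} to the stated equivalence; and (ii) the Farkas/recession-cone argument showing that (\ref{cr1}) is always feasible (since $\{z\ge 0 : Az\le 0\}=\{0\}$ when $X$ is nonempty and bounded), which is what lets you conclude unboundedness rather than mere absence of an optimum in the inconsistent case. Point (ii) in particular is never made explicit in the paper, and without it the second sentence of the corollary would not follow from the LP trichotomy alone, so your write-up is a genuine completion of the intended argument rather than a departure from it.
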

\begin{remark}\label{remcor1} Let \ $U_h(y)$ \ be the set of solutions of system
(\ref{eq14}) with respect to $u$  for fixed \ $h\in R^1$ and $y\in
R^m$. Additionally, let $\overline{U}_h(y)$ be the set of solutions
of system (\ref{eq17})  with respect to $u$ for  fixed $h\in R^1$
and fixed $y\in R^m$ and denote
 $$Y_h=\{y\in R^m | U_h(y)\neq \emptyset\}; \ \ \  \overline{Y}_h=\{y\in R^m | \overline{U}_h(y)\neq \emptyset\}. $$
In terms of these sets we can formulate the results above as
follows:

\medskip
1. For a given $h$ \ system (\ref{eq13})  has solutions if and only
if \ $Y\not\subset Y_h$ \ and \ the

minimal value $h^*$  of  $h$ for which this  property holds is equal
to the optimal

value of the objective function of problem \
(\ref{eq1})-(\ref{eq3});

\medskip
2. For a given $h$  system \ (\ref{eq013}) \  has no solutions if
and \ only  if \ $Y\subseteq \overline {Y}_h$  and

the maximal value \ $h^*$ \ of \ $h$ \ for which this property holds
is equal to \ the

optimal value of the objective function of problem
(\ref{eq1})-(\ref{eq3});

\medskip
3. If   system (\ref{eq140})  is consistent, then \  $Y\subset
\overline{Y}_h$ \  for  $h< h^*$ \ and \ $Y\not\subset Y_h$ \ for

 $h\geq h^*$, \  i.e, \  $\overline{Y}_{h^*}\setminus
\ Y_{h^*}$ \ represents  the  set \ of \ optimal\ points \ $y^*\in
Y$ \ for

problem  \ (\ref{eq1})-(\ref{eq3});

\medskip
4. If system (\ref{eq140}) is inconsistent,   then
$Y_{h^*}=\emptyset$ and an arbitrary solution \  $x^*$

of \ system \ (\ref{eq171})\  together \ with \ an  arbitrary \
$y\in Y$ \ represent \ an \ optimal

solution of problem (\ref{eq1})-(\ref{eq3}), i.e   $Y_h=\emptyset$
for $h\geq h^*$ and $Y\subset \overline{Y}_h$  for  $h<h^*$.
\end{remark}
\begin{corollary} For a given $h\in [-2^L,2^L]$ system (\ref{eq13})  has
solutions if and only if $Y\not\subset Y_h$; if $Y\subset Y_h$, then
system (\ref{eq13}) has no solutions.
\end{corollary}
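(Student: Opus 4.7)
The plan is to derive this corollary as a direct notational reformulation of Lemma \ref{thm0} (equivalently Corollary \ref{corol01}) in terms of the set $Y_h$ introduced in Remark \ref{remcor1}. Recall that $Y_h = \{y \in R^m \mid U_h(y) \neq \emptyset\}$, where $U_h(y)$ is the solution set of system (\ref{eq14}) for fixed $h$ and $y$. Thus the statement ``$Y \subset Y_h$'' unfolds, by definition, into ``for every $y \in Y$, system (\ref{eq14}) is consistent with respect to $u$''.

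First I would invoke Lemma \ref{thm0}, which asserts that system (\ref{eq13}) has no solutions if and only if system (\ref{eq14}) is consistent with respect to $u$ for every $y$ satisfying (\ref{eq3}), i.e.\ for every $y \in Y$. Translating this into the $Y_h$ language gives precisely: system (\ref{eq13}) is inconsistent if and only if $Y \subset Y_h$. Taking the contrapositive yields the first claimed equivalence: system (\ref{eq13}) has solutions if and only if $Y \not\subset Y_h$.

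The second half of the corollary, ``if $Y \subset Y_h$, then system (\ref{eq13}) has no solutions,'' is then just one direction of the equivalence already established, stated separately for emphasis. The restriction $h \in [-2^L, 2^L]$ plays no logical role in the proof itself; it merely records, via Lemma \ref{lem1}, the range in which the relevant threshold $h^*$ for the optimal value of (\ref{eq1})--(\ref{eq3}) can lie, so that it suffices to consider $h$ in this interval when searching for feasibility transitions.

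I do not expect any genuine obstacle: the entire content is a definitional unpacking of $Y_h$ combined with the already-proved Lemma \ref{thm0}. The only point worth being careful about is ensuring that the hypotheses of Lemma \ref{thm0}, namely that $X$ and $Y$ are nonempty and bounded, are in force here; these are in fact the standing assumptions throughout the paper, so they apply automatically.
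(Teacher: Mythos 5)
Your proof is correct and follows exactly the route the paper intends: the corollary is an immediate restatement of Lemma \ref{thm0} (and Corollary \ref{corol01}) once the condition ``system (\ref{eq14}) is consistent for every $y\in Y$'' is rewritten as $Y\subset Y_h$ using the definition of $Y_h$ from Remark \ref{remcor1}. Your observation that the interval $[-2^L,2^L]$ plays no logical role beyond locating $h^*$ via Lemma \ref{lem1} is also consistent with the paper.
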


\medskip
  Thus, based on Theorem \ref{tm01}, we can replace  disjoint bilinear programming problem
(\ref{eq1})-(\ref{eq3}) with the problem of determining the optimal
value $h^*$ of $h$ and vector $y^*\in Y$ for which  system
(\ref{eq17}) is consistent with respect to $u$. System (\ref{eq17})
can be regarded as a parametric system with right-hand members that
depend on the vector of parameters $y\in Y$ and $h \in R^1$. If in
(\ref{eq13}) we regard $x$ as a vector of parameters, then we can
prove a variant of Theorem \ref{tm01} in which parametrical system
(\ref{eq17}) is replaced by the parametrical system
\begin{equation}
\left\{%
\begin{array}{ll}
    -B^T v \le C^T x +e^T, \\
 \quad \ \   b v \le g x - h, \\
 \quad \ \ \ v \ge 0,\\
\end{array}%
\right. \label{eq18}
\end{equation}
where \ $v$ \ is the vector of variables and \ $x$ \ is the vector
of parameters that satisfies (\ref{eq2}). This means that for the
considered parametric linear systems (\ref{eq17}) and (\ref{eq18})
we can formulate  the following duality principle (see \cite{l87}):
\begin{theorem}
The system of linear inequalities \ (\ref{eq17}) \ is consistent
with respect to $u$ for every $y$ satisfying (\ref{eq3}) if and only
if the system of linear inequalities (\ref{eq18}) is consistent with
respect to $v$ for every $x$ satisfying (\ref{eq2}).
\end{theorem}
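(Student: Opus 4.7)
The plan is to reduce the stated equivalence to a symmetry argument on top of Theorem \ref{tm01}. The key observation is that both statements characterize exactly the same quantity, namely whether the optimal value $z^*$ of the disjoint bilinear programming problem (\ref{eq1})-(\ref{eq3}) is at least $h$, so their mutual equivalence is automatic once each is shown to be equivalent to this single condition.

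First I would invoke Theorem \ref{tm01} as stated. It already establishes that system (\ref{eq17}) is consistent with respect to $u$ for every $y\in Y$ if and only if system (\ref{eq013}) is inconsistent, which in turn is equivalent to $z^*\ge h$, where $z^*$ is the minimal value of (\ref{eq1}) on $X\times Y$. The maximal $h$ for which this property holds is exactly $z^*$.

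Next I would apply exactly the same theorem, but with the roles of $x$ and $y$ interchanged. The bilinear objective can be rewritten as $x^TCy+gx+ey = y^T C^T x + ey + gx$, so the problem (\ref{eq1})-(\ref{eq3}) is symmetric under the substitution $(A,a,g,x)\leftrightarrow (B,b,e,y)$ and $C\leftrightarrow C^T$ (this is the exact content of the dual min-max formulation of Theorem \ref{extrthm2}). Under this substitution the parametric system (\ref{eq17}) becomes precisely system (\ref{eq18}): the roles of the parameter $y$ and the decision variable $u$ become those of the parameter $x$ and the decision variable $v$, and the constraint matrix $-A^T$ becomes $-B^T$ while $Cy+g^T$ becomes $C^Tx+e^T$. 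Consequently Theorem \ref{tm01} yields that system (\ref{eq18}) is consistent with respect to $v$ for every $x$ satisfying (\ref{eq2}) if and only if the very same condition $z^*\ge h$ holds, and the maximal such $h$ again equals $z^*$.

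Combining the two characterizations gives the claimed equivalence. The only subtlety, and the point that merits careful checking, is that the substitution is legitimate, i.e.\ that Theorem \ref{tm01}'s hypotheses (nonempty and bounded $X$, $Y$) and its derivation (reducing the bilinear problem to a parametric LP via duality) are genuinely symmetric in the two blocks of variables. This symmetry is exactly what is formalized by Theorem \ref{extrthm2}, so no new inconsistency-reduction argument is required; the proof is essentially an application of Theorem \ref{tm01} twice, once to each inner minimization order.
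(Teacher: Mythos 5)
Your proposal is correct and is essentially the paper's own argument: the paper states the theorem without a formal proof, but the preceding paragraph derives it exactly as you do, by noting that Theorem \ref{tm01} applies symmetrically when $x$ is regarded as the parameter vector (under the substitution $(A,a,g,x)\leftrightarrow(B,b,e,y)$, $C\leftrightarrow C^T$), so that both consistency conditions are equivalent to the inconsistency of system (\ref{eq013}), i.e.\ to $z^*\ge h$. Your additional care in checking that the hypotheses and the duality reduction are symmetric in the two variable blocks is exactly the right point to verify, and it holds.
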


All results formulated and proved in this section are valid also for
the case when $Y$ is determined by an arbitrary consistent system
\begin{equation}\label{arb1}
Dy\leq d,
\end{equation}
where $D=(d_{ij})$ is a $p\times m$-matrix and $d$ is a column
vector with $p$ components. In the case when system (\ref{eq3}) is
replaced by system (\ref{arb1}), the following duality principle
holds.
\begin{theorem} The system of linear inequalities \ (\ref{eq17}) \ is consistent
with respect to $u$ for every $y$ satisfying (\ref{arb1}) if and
only if the  system
\begin{equation}
\left\{%
\begin{array}{ll}
    -D^T v = C^T x +e^T, \\
 \quad \ \   d v \le g x - h, \\
 \quad \ \ \ v \ge 0,\\
\end{array}%
\right. \label{eq181}
\end{equation}
is consistent with respect to $v$ for every $x$ satisfying
(\ref{eq2}).
\end{theorem}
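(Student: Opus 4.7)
The strategy is to reduce both halves of the stated equivalence to the same bilinear inconsistency statement, namely the inconsistency in $(x,y)$ of the joint system
\begin{equation}\label{duajsys}
Ax\le a,\ \ x\ge 0,\ \ Dy\le d,\ \ x^TCy+gx+ey<h.
\end{equation}
Consistency of (\ref{eq17}) for every $y$ satisfying (\ref{arb1}) can be reduced to the inconsistency of (\ref{duajsys}) by the same argument as in Theorem \ref{tm01}, while consistency of (\ref{eq181}) for every $x$ satisfying (\ref{eq2}) arises from the symmetric reduction obtained by dualizing in the $y$-block. The new ingredient, compared with the already-proved duality between (\ref{eq17}) and (\ref{eq18}), is that $y$ is now unrestricted in sign, which is what forces the inequality dual constraint $-B^Tv\le C^Tx+e^T$ in (\ref{eq18}) to become the equality $-D^Tv=C^Tx+e^T$ in (\ref{eq181}).

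For the first reduction I would invoke the extension of Theorem \ref{tm01} already noted in the paper: only the $x$-subsystem enters the passage from the inconsistency of a parametric system of the form (\ref{eq013}) to the consistency of (\ref{eq17}) for every $y\in Y$, so exactly the same chain of steps applies verbatim when $Y$ is defined by the arbitrary system (\ref{arb1}). Consequently, consistency of (\ref{eq17}) with respect to $u$ for every $y$ with $Dy\le d$ is equivalent to the inconsistency of (\ref{duajsys}).

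For the second reduction, fix $x$ with $Ax\le a,\ x\ge 0$ and regard (\ref{duajsys}) as a system in $y$ alone, namely $Dy\le d$ together with $(C^Tx+e^T)^Ty<h-gx$, where $y$ is free. Since $Y$ is nonempty and bounded, the linear program $\min\{(C^Tx+e^T)^Ty\mid Dy\le d\}$ attains a finite minimum, and inconsistency of the $y$-system is equivalent to this minimum being $\ge h-gx$. Strong LP duality for a free-variable primal gives the dual
\[\max\{-d^Tv\mid D^Tv=-(C^Tx+e^T),\ v\ge 0\},\]
so the bound holds iff there exists $v\ge 0$ with $-D^Tv=C^Tx+e^T$ and $d^Tv\le gx-h$, which is precisely consistency of (\ref{eq181}). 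Letting $x$ range over all solutions of (\ref{eq2}), inconsistency of (\ref{duajsys}) is equivalent to consistency of (\ref{eq181}) for every such $x$, completing the chain. The main subtlety is the handling of the strict inequality jointly with the free variable $y$; boundedness of $Y$ is what dissolves it, because the primal minimum is then attained and the strict/non-strict distinction collapses into a non-strict lower bound on which standard LP duality applies directly, bypassing the homogenization device used in the proof of Theorem \ref{tm01}.
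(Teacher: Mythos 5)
Your proposal is correct and follows the route the paper itself implies (the paper states this theorem without proof, as the $x$-parametrized counterpart of Theorem \ref{tm01} obtained by passing both conditions through the inconsistency of the joint system $Ax\le a$, $x\ge 0$, $Dy\le d$, $x^TCy+gx+ey<h$). The only deviation is that for the $y$-block you replace the homogenization-plus-transposition device used in the proof of Theorem \ref{tm01} by direct strong LP duality, using that the primal minimum over the nonempty bounded set $Y$ is attained so the strict inequality collapses to a non-strict lower bound; this is legitimate under the paper's standing assumptions and correctly accounts for the equality constraint $-D^Tv=C^Tx+e^T$ forced by the sign-unrestricted $y$.
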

\section{The approach for solving the problem  based  on \\Theorem \ref{tm01}}\label{sybsect1}
 The general scheme of the approach we shall use  for solving the disjoint bilinear programming
problem  is based on Theorem \ref{tm01}\ and is as follows:

We replace problem (\ref{eq1})-(\ref{eq3}) \ by  problem of
determining the maximal value \ $h^*$ \ of parameter \ $h$ \ such
that system \ (\ref{eq17}) \ is consistent with respect to \ $u$ \
for every $y$ \ satisfying \ (\ref{eq3}). Then we show how to
determine   the corresponding point \ $x^*$, that satisfy the
conditions of Theorem \ref{thm0}. To apply this approach it is
necessary to develop algorithms for checking conditions 1) and 2) of
Theorem \ref{tm01}, i.e. it is necessary to develop algorithms for
checking if the condition \ $Y\not\subset Y_h$ holds  for a given
$h\in [-2^L,2^L]$.  In general case, the problem of checking such a
condition is a difficult problem from computational point of view,
however for some special cases suitable algorithms can be developed.
In this article we show that if  $Y$ has a structure of an
acute-angled polytope or perfect polytope, then the condition \
$Y\not\subset Y_h$ can be checked in polynomial time. Based on this
we show how to determine the optimal  solution of the disjoint
bilinear programming problem when one of the disjoint subsets has a
structure of an acute-angled polytope. To do this, in Section
\ref{trusc2} we prove Theorem \ref{thm101}, that represents a
refinement of Theorem \ref{tm01} for the case when $Y$ has a
structure of a perfect polytope, and based on this, we show how to
check if $Y\not\subset Y_h$.

\medskip
In general,  if  $Y$  is a bounded set then  condition $Y\not\subset
Y_{h}$  can be detected in finite time by checking the consistency
(or inconsistency) of system  (\ref{eq14}) for all  extreme points
of $ Y$. In analogous way condition   $Y\subseteq \overline{Y}_{h}$
can be verified by checking the consistency of system  (\ref{eq17})
with respect to  $u$  for each extreme point of $Y$.  It is evident
that such an approach for checking condition \ $Y\subseteq Y_{h}$  \
(or $Y\not\subset Y_{h})$) \ is complicate from computational point
of view. The algorithm we propose for the case when $Y$ is a perfect
polytope, avoid exhaustive search. Moreover, we show that in the
case of problems \ (\ref{equat3})-(\ref{equat5}) \ and \
(\ref{eq9})-(\ref{eq12}) \ our approach allows to elaborate
efficient algorithms for solving them.
\section{Some auxiliary results related to redundant inequalities for linear
systems}\label{rdund1} In what follows we shall use some properties
of redundant inequalities for linear systems.
 An inequality
\begin{equation}
\sum_{j=1}^m s_j y_j \le s_0 \label{eq19}
\end{equation}
is called \emph{redundant} for a consistent system of linear
inequalities
\begin{equation}
\sum_{j=1}^m d_{ij} y_j \le d_{i0}, \ \ \ \ \ \ i=1,2,\dots,p,
\label{eq20}
\end{equation}
if (\ref{eq19}) holds for an arbitrary solution of system
(\ref{eq20}). We call the redundant inequality (\ref{eq19})
\emph{non-degenerate} if $s_j\neq 0$ at least for an index
$j\in\{1,2,\dots,m\}$. If  $s_j=0,\ j=1,2,\dots,m $, and \ $s_0\geq
0$ we say that the redundant inequality (\ref{eq19}) is
\emph{degenerate}. We call the redundant inequality (\ref{eq19}) \
for consistent system (\ref{eq20})  \emph{strongly redundant } if
there exists $\epsilon >0$ such that the corresponding inequality
$$\sum_{j=1}^m s_j y_j \le s_0 -\epsilon$$
is redundant for (\ref{eq20}); if  such an  $\epsilon$  does not
exist then we call  inequality (\ref{eq19}) \emph{weakly redundant.}
If an inequality
 \begin{equation}\label{lbl19}
\sum_{j=1}^m d_{kj} y_j \le d_{k0}
 \end{equation} of system
(\ref{eq20}) can be omitted without changing the set of its feasible
solutions then we say that it is redundant in (\ref{eq20}), i.e.,
inequality (\ref{lbl19}) is redundant in (\ref{eq20}) if it is
redundant for the system of the rest of its inequalities.

 The redundancy property of linear inequality (\ref{eq19}) for
consistent system (\ref{eq20}) can be checked based on following
Farkas theorem \cite{che,far}:
\begin{theorem}\label{clasmft}
Inequality (\ref{eq19}) is redundant for consistent system
(\ref{eq20}) if and only if the \ system
\begin{equation}
\left\{%
\begin{array}{ll}
s_j=\sum\limits_{i=1}^p d_{ij} v_i, \ \ \ \ \ \ j=1,2,\dots,m; \\
s_0= \sum\limits_{i=1}^p d_{i0} v_i + v_0; \\
\quad\quad\quad\quad \ \ v_i \ge 0, \ i=0,1,2,\dots,p,
\end{array}%
\right. \label{eq22}
\end{equation}
has solutions with respect to $v_0,v_1,v_2,\dots,v_p.$ Moreover, if
inequality  (\ref{eq19}) is redundant for  system (\ref{eq20}), then
system (\ref{eq22}) has a basic feasible solution $v_0^*,
v_1^*,v_2^*,\dots,v_p^*$ that satisfies the following conditions:

1) the set of column vectors \vspace*{2mm}
$$
\bigg\{D_i=\left(\begin{array}{c}
 d_{i1}\\[1.5mm]
 d_{i2}\\[1.5mm]
  \vdots\\
 d_{im}
 \end{array}
 \right):\ v_i^*>0,\ i\in\{1,2,\dots,p\}\bigg\} $$

$\ \ $ is linearly independent;

2) inequality \ (\ref{eq19}) \ is redundant for the  subsystem of
system \ (\ref{eq19}) \ induced

$\ \ $ by the inequalities that correspond to indices
$i\in\{1,2,\dots,p\}$  with $v_i^*>0$;

3) if \ $v_0^*>0$,   then  inequality   (\ref{eq19})  is strongly
redundant for  system (\ref{eq20}) and

 $\ \ \ $ if  $v_0^*=0$,  then  inequality   (\ref{eq19})  is weakly redundant for  system (\ref{eq20}).
\end{theorem}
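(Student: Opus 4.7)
The plan is to recognize the redundancy of (\ref{eq19}) for (\ref{eq20}) as the linear programming duality statement that the linear program (P): $\max\{s^T y : Dy\le d\}$ has optimum value not exceeding $s_0$, and to derive the whole theorem from LP duality together with the standard existence of basic feasible optimal solutions for LPs in standard form.

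For the ``if'' direction I would take a nonnegative solution $(v_0^*,v_1^*,\dots,v_p^*)$ of (\ref{eq22}), and, for an arbitrary $y$ satisfying (\ref{eq20}), multiply the $i$-th inequality by $v_i^*\ge 0$ and sum. Using the first group of equations in (\ref{eq22}) the left-hand side becomes $\sum_j s_j y_j$, and the second equation shows the right-hand side equals $s_0-v_0^*\le s_0$, yielding (\ref{eq19}). For the ``only if'' direction, consistency of (\ref{eq20}) makes (P) feasible, and redundancy bounds its value by $s_0$, so (P) has a finite optimum $\sigma^*\le s_0$. By LP duality the dual (D): $\min\{d^T v : D^T v=s,\ v\ge 0\}$ has the same optimum $\sigma^*$, and, being in standard form, feasible, and bounded from below, admits an optimal basic feasible solution $v^*$. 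Setting $v_0^*:=s_0-d^T v^*\ge 0$ produces a solution of (\ref{eq22}). Property 1) then follows because the columns of $D^T$ active in a basic feasible solution of (D) are linearly independent, and property 2) is immediate because the linear combination used in the ``if'' direction involves only indices with $v_i^*>0$, so the redundancy of (\ref{eq19}) is already established from that subsystem alone.

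To finish, property 3) is obtained by distinguishing the two cases $\sigma^*<s_0$ and $\sigma^*=s_0$. If $v_0^*=s_0-\sigma^*>0$, the computation of the ``if'' direction actually gives $s^T y\le s_0-v_0^*$ for every $y$ satisfying (\ref{eq20}), so (\ref{eq19}) is strongly redundant with $\epsilon=v_0^*/2$. Conversely, if (\ref{eq19}) is strongly redundant with some $\epsilon>0$, applying the duality argument to the tighter bound $s_0-\epsilon$ forces $\sigma^*\le s_0-\epsilon<s_0$, whence $v_0^*>0$; so the case $v_0^*=0$ is precisely the weakly redundant case. The one delicate point I anticipate is ensuring that a \emph{single} basic feasible solution simultaneously witnesses all three properties; this is resolved by choosing $v^*$ to be an \emph{optimal} basic feasible solution of (D), which is always available under our hypotheses and makes $v_0^*=s_0-\sigma^*$ a function of the optimum of (D) rather than of an arbitrary feasible point.
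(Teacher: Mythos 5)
Your proof is correct. Note that the paper does not actually prove Theorem~\ref{clasmft}; it only refers the reader to \cite{karm,che,far}, so there is no in-paper argument to compare against, and your LP-duality route (redundancy $\Leftrightarrow$ $\max\{s^Ty: Dy\le d\}\le s_0$, then pass to the dual $\min\{d^Tv: D^Tv=s,\ v\ge 0\}$ and take an \emph{optimal} basic feasible solution) is the standard and natural one. All three properties come out correctly: 1) from the linear independence of the active columns of a basic feasible solution of the dual, 2) because the nonnegative-combination certificate uses only the indices with $v_i^*>0$, and 3) because with the optimal choice $v_0^*=s_0-\sigma^*$ is positive exactly when the primal optimum is strictly below $s_0$, which is equivalent to strong redundancy (attainment of the LP optimum gives the weakly redundant case when $v_0^*=0$). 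The only cosmetic point you might add is that $(v_0^*,v_1^*,\dots,v_p^*)$ is itself a basic feasible solution of the full system (\ref{eq22}): appending the unit column associated with $v_0$ (which vanishes in the first $m$ coordinates) to the independent set $\{D_i: v_i^*>0\}$ preserves linear independence, so no further argument is needed.
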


The proof of this theorem can be found in  \cite{karm,che,far}.

\begin{corollary}\label{mfcorl1}
Let  redundant inequality (\ref{eq19}) for consistent system
(\ref{eq20}) be given and consider the following linear programming problem:\\
Minimize
\begin{equation}\label{mfeq20}
 z=\sum\limits_{i=1}^p d_{i0} v_i
\end{equation}
subject to
\begin{equation}\label{mfeq21}
\left\{%
\begin{array}{ll}
s_j=\sum\limits_{i=1}^p d_{ij} v_i, \ \ \ \ \ \ j=1,2,\dots,m; \\
\quad \quad\quad\quad \ \ v_i\ge 0, \ \ i=1,2,\dots,p.
\end{array}%
\right.
\end{equation}
 Then  this problem has an optimal solution $v_1^*,v_2^*,\dots,v_p^*$  where $s_0\geq\sum\limits_{i=1}^p d_{i0} v_i^*$. If
$s_0>\sum\limits_{i=1}^p d_{i0} v_i^*$   then inequality
(\ref{eq19}) is strongly redundant for  system (\ref{eq20}) and if
$s_0=\sum\limits_{i=1}^p d_{i0} v_i^*$  then inequality (\ref{eq19})
is weakly redundant for system (\ref{eq20}).
\end{corollary}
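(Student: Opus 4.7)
The plan is to prove the corollary by linear programming duality applied to the LP (\ref{mfeq20})--(\ref{mfeq21}), using Theorem \ref{clasmft} only for feasibility.

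First, I would verify that the LP is feasible. Since inequality (\ref{eq19}) is redundant for consistent system (\ref{eq20}), Theorem \ref{clasmft} produces nonnegative values $v_0^*, v_1^*, \dots, v_p^*$ satisfying the equations of (\ref{eq22}). The tuple $(v_1^*,\dots,v_p^*)$ then automatically satisfies (\ref{mfeq21}), so the LP has a feasible point, with $\sum_i d_{i0}v_i^* = s_0 - v_0^* \le s_0$.

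Next, I would form the dual of (\ref{mfeq20})--(\ref{mfeq21}). Because the primal has equality constraints and nonnegative variables, the dual is the problem: maximize $\sum_{j=1}^m s_j y_j$ subject to $\sum_{j=1}^m d_{ij}y_j \le d_{i0}$ for $i=1,\dots,p$, which is precisely the problem of maximizing the left-hand side of (\ref{eq19}) over the solution set of (\ref{eq20}). This dual is feasible since (\ref{eq20}) is consistent by assumption, and it is bounded above by $s_0$ exactly because (\ref{eq19}) is redundant for (\ref{eq20}). Hence both primal and dual have optimal solutions of equal value; in particular the primal attains its optimum at some $v_1^*,\dots,v_p^*$, and by weak duality $\sum_i d_{i0}v_i^* \le s_0$, which gives the first assertion.

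Finally I would split on whether the common optimal value equals $s_0$. If $s_0 > \sum_i d_{i0} v_i^*$, then setting $\epsilon := s_0 - \sum_i d_{i0} v_i^* > 0$, strong duality gives $\sum_j s_j y_j \le \sum_i d_{i0}v_i^* = s_0 - \epsilon$ for every $y$ satisfying (\ref{eq20}), so the tightened inequality is redundant and (\ref{eq19}) is strongly redundant by definition. Conversely, if $s_0 = \sum_i d_{i0} v_i^*$, then strong duality supplies an optimal dual solution $y^*$ satisfying (\ref{eq20}) with $\sum_j s_j y_j^* = s_0$; for any candidate $\epsilon>0$ this $y^*$ violates $\sum_j s_j y_j \le s_0-\epsilon$, so no such $\epsilon$ exists and (\ref{eq19}) is weakly redundant. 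I do not anticipate a real obstacle here; the only care point is setting up the LP dual correctly (equality constraints yield unconstrained-in-sign dual variables, i.e.\ no sign restriction on $y$), so I must use that Theorem \ref{clasmft} encodes exactly this relationship and that the optimal dual value certifies the tightest valid upper bound on $\sum_j s_j y_j$ over (\ref{eq20}).
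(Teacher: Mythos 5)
Your proof is correct. Note that the paper itself states Corollary \ref{mfcorl1} without proof, treating it as an immediate consequence of Theorem \ref{clasmft}; the implicit route there is certificate-based: the LP (\ref{mfeq20})--(\ref{mfeq21}) simply maximizes the slack $v_0=s_0-\sum_i d_{i0}v_i$ over the solutions of (\ref{eq22}), and property 3) of Theorem \ref{clasmft} (together with the elementary observation that any certificate with $v_0>0$ yields $\sum_j s_j y_j\le s_0-v_0$ on all of $Y$) then gives the dichotomy. Your argument instead passes to the dual LP $\max\{\sum_j s_j y_j : y \text{ satisfies } (\ref{eq20})\}$ and invokes strong duality; this is a genuinely different and fully self-contained derivation. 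The setup is right (equality constraints with nonnegative $v$ dualize to sign-free $y$ with $\sum_j d_{ij}y_j\le d_{i0}$), the boundedness of each problem is correctly supplied by feasibility of the other together with the redundancy hypothesis, and attainment of the dual optimum is what delivers the witness $y^*$ showing that no $\epsilon>0$ works in the equality case. The one thing your route buys over the paper's is precisely that explicit witness for weak redundancy, rather than relying on the (somewhat delicately stated) biconditional in property 3) of Theorem \ref{clasmft}.
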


 Theorem \ref{clasmft}  can be extended for the case when system
(\ref{eq20}) is inconsistent.
\begin{definition}
 Assume that system (\ref{eq20}) is
inconsistent. Inequality (\ref{eq19}) is called redundant  for
inconsistent system (\ref{eq20}) if there exists a consistent
subsystem
\begin{equation}\label{eq21}
\sum_{j=1}^m d_{i_kj} y_j \le d_{i_k0}, \ k=1,2,\dots,p' (p'<p),
\end{equation}
of system (\ref{eq20}) such that inequality (\ref{eq19}) is
redundant for (\ref{eq21}).
\end{definition}
\begin{theorem}\label{minfac}
Inequality (\ref{eq19}) is redundant for inconsistent system
(\ref{eq20}) if and only if system (\ref{eq22}) has a basic feasible
solution $v_0, v_1, v_2, \dots, v_p$ for which the set of column
vectors
\begin{equation}\label{crll1}
D^+=\Bigg\{\left(\begin{array}{c}
 d_{i1}\\[0.5mm]
 d_{i2}\\[0.5mm]
  \vdots\\[0.5mm]
 d_{im}
 \end{array}
 \right):\ v_i>0,\ i\in\{1,2,\dots,p\} \Bigg\}
 \end{equation}
is linearly independent.  Moreover,  the  subsystem of inconsistent
system (\ref{eq20}) induced
 by inequalities that correspond to indices  with
$v_i^0>0$ is a consistent subsystem of  system (\ref{eq20}).
\end{theorem}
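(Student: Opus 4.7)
The plan is to prove both directions by reducing to the classical Farkas-style redundancy theorem (Theorem \ref{clasmft}) and exploiting the fact that linear independence of a subset of columns forbids the existence of the certificate of infeasibility provided by Farkas' lemma. Let $I^+ = \{ i \in \{1,\dots,p\} : v_i > 0 \}$ denote the support of the solution under consideration.

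For the necessity direction, I would start from the hypothesis that inequality (\ref{eq19}) is redundant for the inconsistent system (\ref{eq20}). By the definition immediately preceding the theorem, there is a consistent subsystem (\ref{eq21}) of (\ref{eq20}) for which (\ref{eq19}) is redundant. Applying Theorem \ref{clasmft} to this consistent subsystem yields a basic feasible solution of the ``reduced'' version of (\ref{eq22}) (restricted to the indices appearing in (\ref{eq21})) whose positive components correspond to a linearly independent family of column vectors. I would then extend this solution to a vector $(v_0, v_1, \dots, v_p)$ by setting $v_i = 0$ for every index $i$ not appearing in (\ref{eq21}). The extended vector clearly satisfies (\ref{eq22}), is basic feasible (the support has not grown), and the associated column family $D^+$ is exactly the one from the reduced system, hence linearly independent.

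For the sufficiency direction, suppose (\ref{eq22}) has a basic feasible solution $(v_0, v_1, \dots, v_p)$ with $D^+$ linearly independent. The key step (and the main obstacle) is to prove that the subsystem of (\ref{eq20}) induced by $I^+$ is consistent; this will simultaneously establish the ``moreover'' statement. I would apply Farkas' theorem to this subsystem: it is inconsistent iff there exists $w \geq 0$ supported on $I^+$ with $\sum_{i \in I^+} w_i D_i = 0$ and $\sum_{i \in I^+} d_{i0} w_i < 0$. But linear independence of $\{ D_i : i \in I^+ \}$ forces any such $w$ to be zero, precluding a strict inequality $\sum d_{i0} w_i < 0$. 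Hence no Farkas certificate of infeasibility exists, and the subsystem indexed by $I^+$ is consistent.

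Once consistency of this subsystem is in hand, I would observe that the restriction of $(v_0, v_1, \dots, v_p)$ to the indices in $I^+ \cup \{0\}$ is a solution of system (\ref{eq22}) written for that subsystem (the equations and inequalities for $i \notin I^+$ drop out because $v_i = 0$ there). By the \emph{if} part of Theorem \ref{clasmft}, inequality (\ref{eq19}) is redundant for the consistent subsystem indexed by $I^+$, and therefore, by the definition preceding the theorem, (\ref{eq19}) is redundant for the inconsistent system (\ref{eq20}). This completes both directions and the ``moreover'' clause, since the witnessing consistent subsystem is exactly the one induced by the indices $i$ with $v_i^0 > 0$.
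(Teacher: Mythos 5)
Your proof is correct and follows essentially the same route as the paper: both directions reduce to the consistent subsystem indexed by the positive components and invoke Theorem \ref{clasmft} there. The only difference is that you explicitly justify (via a Farkas certificate and linear independence) why that subsystem is consistent, a step the paper merely asserts.
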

\begin{proof}
$\Rightarrow$ Assume that inequality \ (\ref{eq19}) \ is redundant
for  inconsistent system \ (\ref{eq20}).  Then there exists a
consistent subsystem \ (\ref{eq21}) \ of system \ (\ref{eq20}) \
such that \ (\ref{eq19}) \ is redundant for \ (\ref{eq21}). Then
according to Theorem \ref{clasmft} there exists a basic feasible
solution $v_0, v_{i_1}, v_{i_2}, \dots, v_{i_{p'}}$ for the system
$$\left\{%
\begin{array}{ll}
s_j=\sum\limits_{k=1}^{p'} d_{i_kj} v_{i_k}, \ \ \ \ \ \ \ \ \ \quad \quad j=1,2,\dots,m; \\
s_0= \sum\limits_{k=1}^{p'} d_{i_k0} v_{i_k} + \ v_0; \\
\quad\quad\quad\quad \  v_0\ge 0, \ v_{i_k} \ge 0, \ \
k=1,2,\dots,p',
\end{array}%
\right.$$ such that the set of column vectors
$$
\bigg\{d_{i_k}=\left(\begin{array}{c}
 d_{i_k1}\\[1.5mm]
 d_{i_k2}\\[1.5mm]
  \vdots\\
 d_{i_km}
 \end{array}
 \right):\ v_{i_k}>0, \ k\in\{1,2,\dots,p'\}
 \bigg\} $$
is linearly independent.

$\Leftarrow$ Let \ (\ref{eq20}) \ \ be an arbitrary inconsistent \
system \ and \ \ $v_0, v_1, v_2, \dots,v_p$ \ be \ a \ solution of
system \ (\ref{eq22}) \ that contains \ $p'\geq 1$ \ nonzero
components \ $ v_{i_1}, \ v_{i_2}, \ \dots, \ v_{i_{p'} } \ $ \ such
that \ the corresponding system of column vectors $\{d_{i_k}:\
v_{i_k}>0, \ k=1,2,\dots,p' \}$ is linearly independent. Then
$p'\leq \min \{m,p\}$ and the corresponding system $$\sum_{j=1}^m
d_{i_kj} y_j \le d_{i_k0}, \ \ \ \ \ \ k=1,2,\dots,p',$$ has
solutions. Based on Theorem \ref{clasmft} we obtain that inequality
(\ref{eq19}) is redundant for system (\ref{eq21}). This means that
inequality (\ref{eq19}) is redundant for inconsistent system
(\ref{eq20}).
\end{proof}
\section{The optimality criterion for the  disjoint bilinear programming problem with a perfect disjoint subset}\label{trusc2}
 In this section we present a refinement of Theorem \ref{tm01} for the
disjoint bilinear programming
 problem  (\ref{eq08})-(\ref{eq38}) with conditions $a)-c)$ for system
 (\ref{eq38}). In fact, this refinement  is related to the case when $Y=\{y | By\leq b, \ y\geq 0\}$ is replaced by  $Y=\{y | Dy\leq
 d\}$ that satisfies conditions  $a)-c)$. We show that that in this case the
 optimality criterion for problem  (\ref{eq08})-(\ref{eq38}) with conditions
 $a)-c)$ can be formulated in  terms of the existence of a basic
 solution with the given basic component for a system of
 linear  equations with nonnegative conditions for the variables. We present the optimality criterion in new terms for  problem (\ref{eq08})-(\ref{eq38}) in the following  extended form:\\
 \noindent \emph{Minimize}
\begin{equation}\label{eq01}
z=\sum\limits_{i=1}^{n}\sum\limits_{j=1}^{m}c_{ij}x_iy_j+\sum\limits_{i=1}^ng_ix_i+\sum\limits_{j=1}^me_jy_j
\end{equation}
\emph{subject to}
\begin{equation}\label{eq02}
\left\{%
\begin{array}{ll}
 \displaystyle\sum\limits_{j=1}^n a_{ij}x_j \le a_{i0},\ \ \ i=1,2,\dots,q,\\[0.5mm]
\quad\quad \ \ x_j \ge 0, \ \ \ \ \ j=1,2,\dots,n;
\end{array}%
\right.
\end{equation}
\begin{equation}
\qquad\quad\quad \ \ \ \sum\limits_{j=1}^md_{ij}y_j\le d_{i0}, \ \ \
i=1,2,\dots,p \ \ (m<p), \label{eq03}
\end{equation}

\medskip So, we will assume that the set of solutions $Y$ of system
(\ref{eq03}) in this problem  satisfies the following conditions:

\medskip\noindent
 $\ \ \ \    $a) \emph{system (\ref{eq03}) does not
contain redundant inequalities and the set \ of \ its}

$\ \   $ \emph{solutions  $Y$ is  a  bounded  set with nonempty
interior;}

\medskip\noindent
$\ \ \ \ b)$ \emph{the set  of solutions of an arbitrary  subsystem}
$$\sum_{j=1}^m d_{i_kj} y_j \le d_{i_k0}, \ \ \ \ k=1,2,\dots,m,$$
$\ \ \ \ \ \ \ $ \emph{of \  rank \ $m$ \ represents a convex \ cone
\ $Y^-(y^r)$ \ with the origin \ at \ an}

$\ \  \ $\emph{extreme point $y^r$ from the set of extreme points
$\{y^1,y^2,\dots,y^N\}$ \ of \ the}

$\ \ \ $\emph{set of solutions $Y$ of system (\ref{eq03});}

\medskip\noindent
$\ \ \      $ c) \emph{at each extreme point \
$y^r\in\{y^1,y^2,\dots,y^N\}$ \ of the set \ of \ solutions \ $Y$}

$\ \  $ \emph{of \ system \ (\ref{eq03}) \ exactly \ $m$ \
hyperplanes \ of \  the  facets  of \ polytope \ $Y$}

$\ \ $ \emph{intersect.}

\medskip\noindent
It is easy to see that if \ \vspace*{2mm} $\arraycolsep 4pt D =\left
( \begin{array}{lll}
 \ B  \\[0.5mm]
 -I  \\
\end{array}\right)
\arraycolsep 5pt ,$ $\arraycolsep 4pt d =\left (
\begin{array}{lll}
  b  \\[0.5mm]
  0  \\
\end{array}\right)
\arraycolsep 5pt $,  $I$ is the identity matrix and \ $0$ \ is the
column vector with zero components, then problem
(\ref{eq01})-(\ref{eq03}) becomes problem (\ref{eq1})-(\ref{eq3}).
Additionally if matrix $B$ is an identity one or step-diagonal then
we obtain a disjoint bilinear programming problems for which
conditions $a)-c)$ hold.

The main results we describe in this section are concerned  with
elaboration of an algorithm that determines if the property $Y \not
\subset Y_h$ holds.
\subsection{The properties of the extreme points for the set of solutions of system (\ref{eq03})}
 Let $y^r=(y_1^r,y_2^r, \dots, y_m^r),  r=1,2,\dots N$  be the extreme points of the set of solutions $Y$ of
system (\ref{eq03}) that satisfies conditions  $a)-c)$. Then for
each $y^r, r\in\{1,2,\dots,N\}$, there exists a unique subsystem
\begin{equation}\label{dscheck3}
\sum_{j=1}^m d_{i_kj} y_j \le d_{i_k0}, \ \ \ \ k=1,2,\dots,m,
\end{equation}
of rank $m$ of system (\ref{eq03}) such that  $y_1^r,y_2^r, \dots,
y_m^r$ is the solution of the system of linear equations
\begin{equation}
\sum_{j=1}^m d_{i_kj} y_j = d_{i_k0}, \ \ \ \ k=1,2,\dots,m.
\end{equation}
Denote
$$I(y^r)= \{i\in\{1,2,\dots, p\} :
\sum\limits_{j=1}^md_{ij}y_j^r=d_{i0}\}$$ and consider the
\emph{convex cone  $Y^-(y^r)$ for system (\ref{eq03})} as the
solution set of the following system
\begin{equation}\label{ldeq01}
\sum\limits_{j=1}^md_{ij}y_j\le d_{i0} ,  \  \ i\in
I(y^r),\end{equation} where  $y^r=(y_1^r,y_2^r, \dots, y_m^r)$  is
the \emph{origin of the cone} $Y^-(y^r)$ and $|I(y^r)|=m$.  We call
the solution set of the system
$$
\sum_{j=1}^m d_{ij} y_j \ge d_{i0}, \ \ i\in I(y^r)
$$
\emph{the symmetrical cone for  $Y^-(y^r)$} and denote it by
$Y^+(y^r)$.

Obviously,  $Y^-(y^r), \ Y^+(y^r)$  represent  convex polyhedral
sets with interior points such that $Y= \cap_{r=1}^N Y^-(y^r)$ and
$Y^+(y^r)\cap Y^-(y^r)=y^r, \ r=1,2,\dots,N$.
\begin{lemma}\label{lemmap01} If \   $Y^+(y^1), \ Y^+(y^2), \  \dots,\  Y^+(y^N) $ \ represent the
symmetrical cones for the corresponding  cones \ $Y^-(y^1), \
Y^-(y^2),\   \dots, \ Y^-(y^N) $ \ of system \  (\ref{eq03}) with
properties $a)-c)$,  then  \ $Y^+(y^r)\bigcap Y^+(y^k)=\emptyset$ \
for $r\neq k$. Additionally if \ $z^1, \ z^2,\ \dots, \ z^N$  \
represent arbitrary points \ of \ the \ corresponding \ cones \
$Y^+(y^1), \ Y^+(y^2),  \dots, Y^+(y^N),$ \ then the convex hull \
$Conv\big(z^1,z^2,\dots,x^N\big) $ \ of points $z^1, \  z^2,  \
\dots, z^N $  contains $ Y$.
\end{lemma}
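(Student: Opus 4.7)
The plan is to handle the two assertions of the lemma separately. The containment $Y\subseteq\operatorname{Conv}(z^1,\dots,z^N)$ admits a clean normal-cone / separation argument that uses conditions a)--c) only through the existence of the $N$ vertices $y^1,\dots,y^N$, while the pairwise disjointness of the symmetric cones $Y^+(y^r)$ is the harder half and will rely on the full combinatorial rigidity enforced by conditions b) and c).

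For the containment I argue by contradiction. If some $y_0\in Y$ is not in $C:=\operatorname{Conv}(z^1,\dots,z^N)$, then since $C$ is a compact convex polytope the separation theorem supplies $c\in R^m$ and $c_0\in R^1$ with $c^{T}y_0>c_0\ge c^{T}z^k$ for every $k$. Let $y^{r^*}$ be a vertex of $Y$ where $c^{T}y$ attains its maximum on $Y$; then $c^{T}y^{r^*}\ge c^{T}y_0>c_0$. Because $y^{r^*}$ is a maximizer, LP duality places $c$ in the normal cone of $Y$ at $y^{r^*}$, yielding a representation $c=\sum_{i\in I(y^{r^*})}\nu_i D_i$ with $\nu_i\ge 0$. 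Since $z^{r^*}\in Y^+(y^{r^*})$ gives $D_i z^{r^*}\ge d_{i0}=D_i y^{r^*}$ for each $i\in I(y^{r^*})$, I obtain $c^{T}z^{r^*}=\sum_i\nu_i D_i z^{r^*}\ge\sum_i\nu_i d_{i0}=c^{T}y^{r^*}>c_0$, contradicting $c^{T}z^{r^*}\le c_0$. Because the $z^k\in Y^+(y^k)$ are arbitrary throughout, this yields the containment in the form stated.

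For the disjointness I first record the preliminary fact $Y^+(y^r)\cap Y=\{y^r\}$: any $y$ in the intersection satisfies $D_i y\le d_{i0}$ (from $y\in Y$) and $D_i y\ge d_{i0}$ (from $y\in Y^+(y^r)$) for every $i\in I(y^r)$, hence $D_i y=d_{i0}$ there, and the rank-$m$ uniqueness in condition b) forces $y=y^r$. Now assume for contradiction that $z\in Y^+(y^r)\cap Y^+(y^k)$ for $r\ne k$, so $D_i z\ge d_{i0}$ for every $i\in I(y^r)\cup I(y^k)$. By Farkas' lemma, inconsistency of this system is equivalent to exhibiting nonnegative multipliers $\lambda_i$ on $I(y^r)\cup I(y^k)$ with $\sum\lambda_i D_i=0$ and $\sum\lambda_i d_{i0}>0$. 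I would construct such a certificate from the combinatorial rigidity of a perfect polytope: condition c) gives exactly $m$ facet hyperplanes through each vertex, and condition b) forces every common zero of $m$ linearly independent facet equations to be a vertex of $Y$, ruling out any ``spurious'' intersection of facet hyperplanes off $Y$ and thereby forcing nonnegative linear dependencies among the $|I(y^r)\cup I(y^k)|>m$ normals. Evaluating any such dependence at $y^r$ and using $D_i y^r=d_{i0}$ for $i\in I(y^r)$ together with $D_j y^r<d_{j0}$ for $j\in I(y^k)\setminus I(y^r)$ gives $\sum\lambda_i d_{i0}=\sum_{j\in I(y^k)\setminus I(y^r)}\lambda_j(d_{j0}-D_j y^r)$, so the strict positivity $\sum\lambda_i d_{i0}>0$ is automatic whenever the support of the dependence meets $I(y^k)\setminus I(y^r)$.

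The main obstacle is this last existence claim: for every pair $r\ne k$ one must produce a nonnegative combination of the $D_i$ that vanishes and whose support meets $I(y^k)\setminus I(y^r)$. When $y^r$ and $y^k$ are adjacent the two ``odd'' facets sometimes supply parallel hyperplanes yielding a two-term dependence $D_{i_0}+\mu D_{i_1}=0$ with $\mu>0$ (giving $d_{i_00}+\mu d_{i_10}>0$ because $Y$ has nonempty interior between the two parallels), but this is not always the case---already the triangular prism requires a three-term dependence involving a facet common to both vertices---and organizing the construction uniformly from conditions a)--c) so that the support hits $I(y^k)\setminus I(y^r)$ is the technical heart of the argument.
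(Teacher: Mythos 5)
Your proof of the containment $Y\subseteq Conv(z^1,\dots,z^N)$ is correct and takes a genuinely different route from the paper: you separate a hypothetical $y_0\in Y\setminus C$ from the compact polytope $C$, maximize the separating functional over $Y$ at a vertex $y^{r^*}$, and use the normal-cone representation $c=\sum_{i\in I(y^{r^*})}\nu_i D_i$ with $\nu_i\ge 0$ (which is where condition c) enters) to force $c^Tz^{r^*}\ge c^Ty^{r^*}>c_0$, a contradiction. The paper instead replaces the vertices $y^r$ by the points $z^r$ one at a time, claiming $Y\subseteq Y^1\subseteq\dots\subseteq Y^N=Conv(z^1,\dots,z^N)$; your separation argument is tighter and more self-contained.

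The disjointness half, however, has a genuine gap, and you flag it yourself: you reduce $Y^+(y^r)\cap Y^+(y^k)=\emptyset$ to exhibiting a Farkas certificate ($\lambda\ge 0$ supported on $I(y^r)\cup I(y^k)$ with $\sum\lambda_iD_i=0$ and $\sum\lambda_i d_{i0}>0$) and then do not construct it; as you observe, already for a triangular prism the required dependence is not of the two-term parallel-facet form, and nothing in your write-up produces the certificate uniformly from conditions a)--c). The missing idea is to stay on the primal side and invoke condition b) directly rather than dualizing. Since $I(y^r)$ alone has rank $m$, the polyhedron $P=Y^+(y^r)\cap Y^+(y^k)$ contains no line, so if it were nonempty it would have an extreme point $y^0$; that extreme point satisfies as equalities a rank-$m$ subsystem $D_iy=d_{i0}$, $i\in J\subseteq I(y^r)\cup I(y^k)$, and condition b) states precisely that the solution of any rank-$m$ equality subsystem of (\ref{eq03}) is an extreme point of $Y$, hence $y^0\in Y$. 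Combining this with your own preliminary fact $Y^+(y^r)\cap Y=\{y^r\}$ and $Y^+(y^k)\cap Y=\{y^k\}$ gives $y^0=y^r=y^k$, contradicting $r\ne k$. This is the paper's argument; it makes the infeasibility certificate you were trying to build unnecessary.
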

\begin{proof}  \  The property  $Y^+(y^r)\cap Y^+(y^k)=\emptyset$ for $r\neq k$ can be proven by contradiction. Indeed, if we  assume  that $Y^+(y^r)\cap
Y^+(y^k)\neq\emptyset$,  then this polyhedral set contains an
extreme point $y^0$, where $y^0\not\in Y$, because $Y^+(y^r)\cap
Y^+(y^k)$ is determined by  the system of inequalities consisting of
inequalities that define $Y^+(y^r)$ and inequalities that define
$Y^+(y^k)$. This means that $y^0=(y^0_1,y^0_2,\dots,y^0_m)$ is the
solution for a system of equations
$$\sum_{j=1}^m d_{i_kj} y_j = d_{i_k0}, \ \ \ \ k\in\{1,2,\dots,m\},$$
of rank $m$. Then according to  properties $a)-c)$  we obtain that
$y^0$ is a solution of system (\ref{eq03}) which is in contradiction
with the fact that $y^0\not\in Y$. So, $Y^+(y^r)\cap
Y^+(y^k)=\emptyset$ for $r\neq k$.

 Now let us show that if  \ $z^1, \ z^2, \ \dots,
 \ z^N$  \ represent arbitrary points of the corresponding
sets \ $Y^+(y^1),\  Y^+(y^2), \ \dots, \ Y^+(y^N)$, then the convex
hull $Conv(z^1,z^1,z^2,\dots,z^N)$ of points $ z^1, z^1, z^2,\dots,
z^N$ contains $Y$. Indeed, if we construct the convex hull \
$Y^1=Conv(z^1,y^2,\dots,y^N)$ \ of points \ $z^1,y^2,\dots,y^N$,
then \ $y^1\in Y^1$ \ and \ $Y\subseteq Y^1$. \  If\ after that we
construct \ the   convex  hull \ $Y^2=Conv(z^1,z^2,y^3\dots,y^N)$ \
of points \ $(z^1,z^2,y^3,\dots,y^N)$, then \  $y^2\in Y^2$ \ and \
$Y^1\subseteq Y^2$ \ and so on. \ Finally at step \ $N$ \ we
construct the convex hull \ $Y^N=Conv(z^1,z^2,\dots,z^N)$ \ of
points \ $z^1,z^2,\dots,z^N$ \ where $y^N\in Y^N$ and
$Y^{N-1}\subseteq Y^N$, \ i.e.\ $Y\subseteq Y^1\subseteq
Y^2\subseteq \dots \subseteq Y^N=Conv\big(z^1,z^2,\dots,z^N\big)$.
\end{proof}
\begin{corollary} If system (\ref{eq03})  satisfies conditions  $a)-c)$,  then the
system $$\sum\limits_{j=1}^md_{ij}y_j\geq d_{i0},\ \ \ \
i=1,2,\dots,p,$$ is inconsistent and the inequalities of this system
can be divided into N disjoint consistent subsystems
$$\sum\limits_{j=1}^md_{ij}y_j\geq d_{i0},\ \ \ \ i\in I(y^r), \
r=1,2,\dots,N$$ such that $Y^+(y^l)\cap Y^+(y^k)=\emptyset$ for
$l\neq k$.
\end{corollary}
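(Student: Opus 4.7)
The plan is to derive all three assertions of the corollary from Lemma \ref{lemmap01} together with the definitions of $I(y^r)$ and $Y^+(y^r)$. The three things to verify are: (i) each subsystem $\sum_{j=1}^m d_{ij}y_j\geq d_{i0}$, $i\in I(y^r)$, is consistent; (ii) the cones $Y^+(y^r)$ are pairwise disjoint; (iii) the full reversed system $\sum_{j=1}^m d_{ij}y_j\geq d_{i0}$, $i=1,2,\dots,p$, is inconsistent. For (i) I would simply note that by the very definition of $I(y^r)$ the extreme point $y^r$ satisfies $\sum_j d_{ij}y_j^r = d_{i0}$ for every $i\in I(y^r)$, so $y^r\in Y^+(y^r)$ and each subsystem has at least one solution. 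Assertion (ii) is already established as the first conclusion of Lemma \ref{lemmap01}, so no additional work is required there.

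For (iii), which is the one substantive step, I would argue by contradiction. Suppose some $y^*\in R^m$ satisfies $\sum_j d_{ij}y^*_j \geq d_{i0}$ for all $i=1,2,\dots,p$. Then for every $r\in\{1,2,\dots,N\}$ the index subset $I(y^r)$ is contained in $\{1,\dots,p\}$, hence $y^*$ satisfies the defining inequalities of $Y^+(y^r)$, which gives $y^*\in Y^+(y^r)$. Condition a) guarantees that $Y$ is a bounded polytope with nonempty interior in $R^m$, so its extreme points must affinely span $R^m$ and in particular $N\geq m+1\geq 2$. Picking any two distinct extreme points $y^r$ and $y^k$ then yields $y^*\in Y^+(y^r)\cap Y^+(y^k)$, in direct contradiction with (ii).

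The step where I would take the most care is not any of the deductions above but the interpretation of the phrase ``divided into $N$ disjoint consistent subsystems''. A single index $i$ may lie in several sets $I(y^r)$ at once --- every facet of the unit cube $[0,1]^m$, for instance, contains $2^{m-1}$ extreme points --- so \emph{disjoint} cannot be read as set-disjointness of the index families. The only reading consistent with the statement and with Lemma \ref{lemmap01} is that the $N$ subsystems have pairwise disjoint feasibility regions $Y^+(y^r)$. To justify that every inequality of the original system is represented in at least one subsystem, I would appeal to condition a): non-redundancy forces every inequality to cut out a facet of $Y$, and every facet of a bounded polytope contains at least one of its extreme points, so each $i\in\{1,\dots,p\}$ belongs to some $I(y^r)$. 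With this interpretation nailed down, the corollary reduces entirely to Lemma \ref{lemmap01} and the contradiction argument above.
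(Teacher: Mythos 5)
Your proof is correct and follows the route the paper intends: the corollary is stated without proof as an immediate consequence of Lemma \ref{lemmap01}, and your derivation (consistency of each subsystem because $y^r\in Y^+(y^r)$, inconsistency of the full reversed system because a common solution would lie in two disjoint cones $Y^+(y^r)$, $Y^+(y^k)$ with $N\geq 2$) is exactly the intended filling-in of those details. Your reading of ``disjoint'' as disjointness of the solution sets $Y^+(y^r)$ rather than of the index sets $I(y^r)$ is the only one compatible with the statement, as your cube example shows.
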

\begin{lemma}\label{somnen}
Assume that system  (\ref{eq03}) satisfies conditions  $a)-c)$. If
an inequality \begin{equation}\label{eq002}-\sum_{j=1}^m s_j y_j \le
-s_0\end{equation} is redundant for the inconsistent system
\begin{equation}
-\sum\limits_{j=1}^md_{ij}y_j\le -d_{i0},\ \ \ \ i=1,2,\dots,p,
\label{eq003}
\end{equation} then such an inequality is redundant at least for
a  consistent subsystem
$$-\sum\limits_{j=1}^md_{ij}y_j\leq -d_{i0},\ \ \ \ i\in I(y^r), \ r\in\{1,2,\dots,N\},$$
of  inconsistent system (\ref{eq003}). \end{lemma}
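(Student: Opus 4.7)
The plan is to combine the Farkas-type characterization from Theorem \ref{minfac} with the geometric conditions a)--c) on the perfect polytope $Y$. Redundancy for the inconsistent system (\ref{eq003}) already produces a certificate with small support; conditions a)--c) then force this support to lie inside the active index set $I(y^r)$ of some extreme point $y^r$ of $Y$, at which point the very same certificate proves the redundancy of (\ref{eq002}) for the subsystem indexed by $I(y^r)$.

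First, I would apply Theorem \ref{minfac} to the redundant inequality (\ref{eq002}) and the inconsistent system (\ref{eq003}); this produces a basic feasible solution $v_0, v_1, \ldots, v_p \ge 0$ of the associated Farkas system whose support $I^+ := \{i : v_i > 0\}$ indexes linearly independent rows of $D$, so $|I^+| \le m$, and for which the subsystem $-\sum_j d_{ij} y_j \le -d_{i0}$, $i \in I^+$, is consistent. The crucial step is then to show that $I^+ \subseteq I(y^r)$ for some $r \in \{1,\ldots,N\}$. When $|I^+| = m$, this is immediate: by condition b), the rank-$m$ subsystem $\sum_j d_{ij} y_j \le d_{i0}$, $i \in I^+$, defines a cone $Y^-(y^r)$ whose apex $y^r$ is an extreme point of $Y$ with $\sum_j d_{ij} y^r_j = d_{i0}$ for $i \in I^+$, and condition c) forces $|I(y^r)| = m$, hence $I^+ = I(y^r)$. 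When $|I^+| < m$, the affine flat $A := \{y : \sum_j d_{ij} y_j = d_{i0},\ i \in I^+\}$ has positive dimension $m - |I^+|$, and I would argue that the face $F := A \cap Y$ of $Y$ is nonempty; then any extreme point of $F$ is an extreme point $y^r$ of $Y$ satisfying $I^+ \subseteq I(y^r)$.

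Once $I^+ \subseteq I(y^r)$ is in hand, setting $v_i := 0$ for $i \in I(y^r) \setminus I^+$ preserves the Farkas identities from Theorem \ref{minfac}, so Theorem \ref{clasmft} yields redundancy of (\ref{eq002}) for the consistent subsystem $-\sum_j d_{ij} y_j \le -d_{i0}$, $i \in I(y^r)$, completing the proof. The main obstacle is showing $F \ne \emptyset$ in the case $|I^+| < m$. The approach I would take is an iterative augmentation of $I^+$: at each step adjoin an index $j \notin I^+$ whose row $d_j$ is linearly independent from those already selected and for which the reversed subsystem $\sum_j d_{ij} y_j \ge d_{i0}$ remains consistent. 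The pairwise disjointness of the symmetric cones $Y^+(y^r)$ provided by Lemma \ref{lemmap01}, together with the underlying perfect-polyhedron property that hyperplanes of nonadjacent facets do not meet, should guarantee that such an augmentation can be carried out until a rank-$m$ subset of the form $I(y^r)$ is reached, thereby producing the required extreme point $y^r$.
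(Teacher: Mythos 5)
Your proposal follows essentially the same route as the paper's proof: extract a Farkas certificate with linearly independent support $I^+$ of size at most $m$ via Theorem~\ref{minfac}, so that the corresponding subsystem of (\ref{eq003}) is consistent; identify that subsystem with a symmetric cone $Y^+(y^r)$ via conditions b)--c) when $|I^+|=m$; and for $|I^+|<m$ argue that the solution set of the smaller consistent subsystem still captures some full cone $Y^+(y^r)$, i.e.\ that $I^+\subseteq I(y^r)$ for some extreme point $y^r$. The step you honestly flag as the main obstacle (nonemptiness of the face $A\cap Y$ when $|I^+|<m$) is precisely the step the paper itself dispatches with the bare one-sentence assertion that ``the set of solutions of this system contains at least a convex cone $Y^+(y^r)$,'' so your treatment is, if anything, the more explicit of the two.
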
\
\begin{proof} Assume that inequality (\ref{eq002}) is redundant
for inconsistent system (\ref{eq003}). Then there exits a consistent
subsystem of rank $p'$ $$-\sum\limits_{j=1}^md_{i_kj}y_j\leq
-d_{i0},\ \ \ \ k=1,2,\dots p',$$ for inconsistent system
(\ref{eq003}) such that $p'\leq m$ and inequality (\ref{eq002}) is
redundant for this system. If $p'=m$, then the set of solution of
this system will represents a convex cone $Y^+(y^r), r\in
\{1,2,\dots,N\}$, i.e in this case lemma holds. If $p'<m$, then the
set of solutions of this system contains at least a convex cone
$Y^+(y^r), r\in \{1,2,\dots,N\}$, so lemma holds.
\end{proof}
\subsection{A criterion for checking if  $Y \not \subset Y_h$ based on properties  of redundant inequalities for inconsistent systems  of linear inequalities}
In this subsection we present a criterion for checking the condition
\ $Y \not\subset Y_h$ that we shall use in the next subsection for
the proof of the optimality criterion for problem
(\ref{eq01})-(\ref{eq03}). Such a criterion we formulate and prove
by using the results from Section \ref{rdund1} related to redundant
inequalities for an inconsistent system of linear inequalities.

 If for problem (\ref{eq01})-(\ref{eq03}) we consider system
 (\ref{eq17}) (see Theorem  \ref{tm01})
\begin{equation}\label{eq25}
\left\{%
\begin{array}{ll}
\displaystyle -\sum_{i=1}^q a_{ij} u_i - \sum_{j=1}^m c_{ij} y_j \le g_i, \ \ \ \ \ j=1,2,\dots,n, \\
\displaystyle \quad \sum_{i=1}^q a_{i0} u_i - \sum_{j=1}^m e_j y_j \le -h, \\
\quad\quad\quad\quad u_i \ge 0, \ \ \ \ \ \ i=1,2,\dots,q,\\
\end{array}%
\right.
\end{equation}
then  either  this system has solutions with respect to
$u_1,\!u_2,\dots,\!u_q,y_1,y_2,\dots ,\!y_m$   for an arbitrary
$h\in R^1$ or there exists a minimal value $h^*$  of $h$ for which
this system has a solution.  According to Corollary \ref{corr00} if
for system (\ref{eq25}) there exists a minimal value $h^*$ for which
it is consistent, then the optimal solution of problem
(\ref{eq01})-(\ref{eq03}) can be found by solving linear programming
problem (\ref{crd}),(\ref{eq1710}).
 Therefore in that
follows we will analyze  the case when  system (\ref{eq25}) has
solutions for every $h\in R^1$, i. e. the case when system
(\ref{eq1710}) has no solutions.
\begin{lemma} \label{lemma01} Let  disjoint bilinear programming
problem  (\ref{eq01})-(\ref{eq03})\ \ be such that  system
(\ref{eq03})\ satisfies conditions \ $a)- c)$\  and the set of
solutions \ $X$ of system (\ref{eq02}) is nonempty and bounded. If
system (\ref{eq1710}) has no solutions, then for  a given \ $h$\ the
property \ $Y \not \subset Y_h$ \ holds if and only if for
consistent system (\ref{eq25}) there exists a non-degenerate
redundant inequality
\begin{equation}
\sum_{j=1}^m s_j y_j \le s_0 \label{eq001}
\end{equation}
such that the corresponding symmetrical \ inequality \ (\ref{eq002})
 is redundant for the \ inconsistent \ system (\ref{eq003}).
\end{lemma}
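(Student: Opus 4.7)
The plan is to reformulate $Y\not\subset Y_h$ via LP duality on $X$: since $X$ is nonempty and bounded, $y\in Y_h$ is equivalent to $\min_{x\in X}(x^TCy+gx+ey)>h$ and $y\in \overline{Y}_h$ to the same minimum being $\ge h$, so $Y\not\subset Y_h$ holds iff there exists $(x^0,y^0)\in X\times Y$ with $x^{0T}Cy^0+gx^0+ey^0\le h$. Moreover the $y$-projection of the solution set of~(\ref{eq25}) is exactly $\overline{Y}_h$. Both directions of the equivalence are then proved by combining the Farkas characterization of redundancy (Theorem~\ref{clasmft}) with Lemma~\ref{somnen}, which reduces the redundancy of $-\sum s_j y_j\le -s_0$ for the inconsistent system~(\ref{eq003}) to its redundancy on one of the symmetrical cones $Y^+(y^r)$.

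For the forward direction, I would take $(x^0,y^0)$ as above and solve the linear program $\min_{y\in Y}(x^{0T}Cy+ey)$; by the boundedness of $Y$ and condition~c), the minimum is attained at a nondegenerate extreme point $y^r$ at which exactly the $m$ facet hyperplanes indexed by $I(y^r)$ are active, and the bilinear value at $(x^0,y^r)$ still satisfies $x^{0T}Cy^r+gx^0+ey^r\le x^{0T}Cy^0+gx^0+ey^0\le h$. Setting $s=-(C^Tx^0+e^T)$ and $s_0=gx^0-h$, the redundancy of $\sum s_j y_j\le s_0$ for~(\ref{eq25}) amounts to $x^{0T}Cy+gx^0+ey\ge h$ on $\overline{Y}_h$, which follows from the LP-duality characterization of $\overline{Y}_h$ above. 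For the symmetric inequality, the LP optimality conditions at $y^r$ give $s=\sum_{i\in I(y^r)}\alpha_i d_i$ with $\alpha_i\ge 0$, and the identity $\sum \alpha_i d_{i0}=-(x^{0T}Cy^r+ey^r)\ge gx^0-h=s_0$ (using $d_i^T y^r=d_{i0}$ on $I(y^r)$) shows that $s\cdot y\ge s_0$ on $Y^+(y^r)$, hence $-\sum s_j y_j\le -s_0$ is redundant for~(\ref{eq003}) by Lemma~\ref{somnen}. Non-degeneracy ($s\ne 0$) is forced because $s=0$ would mean $C^Tx^0=-e^T$, making $x^0$ a solution of~(\ref{eq1710}), contrary to hypothesis.

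For the backward direction, I would apply Theorem~\ref{clasmft} to the redundancy of $\sum s_j y_j\le s_0$ for~(\ref{eq25}) and collect the multipliers attached to the three blocks of inequalities of~(\ref{eq25}) as nonnegative $\lambda,\mu,\nu,v_0$ satisfying $A^T\lambda+\nu=\mu a^T$, $-C^T\lambda-\mu e^T=s$ and $g\lambda-h\mu=s_0-v_0\le s_0$. The decisive step is to show $\mu>0$: if $\mu=0$, then non-degeneracy forces $\lambda\ne 0$, and the relations reduce to $A\lambda\le 0$, $\lambda\ge 0$, which is a nonzero recession direction of $X$ and contradicts its boundedness. Then $x:=\lambda/\mu\in X$, and Lemma~\ref{somnen} applied to the second redundancy yields an extreme point $y^r$ of $Y$ with $s\cdot y\ge s_0$ on $Y^+(y^r)$; substituting $s=-\mu(C^Tx+e^T)$ and $s_0\ge \mu(gx-h)$ into $s\cdot y^r\ge s_0$ gives $x^TCy^r+gx+ey^r\le h$, and the initial LP-duality reformulation then places $y^r$ in $Y\setminus Y_h$. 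The main obstacle is precisely this $\mu>0$ analysis, which is where the boundedness of $X$ is indispensable; a secondary technical point is verifying in the forward direction that the LP-optimality normal cone at $y^r$ is spanned exactly by the active facet normals $\{d_i:i\in I(y^r)\}$, a fact that relies on the perfect-polytope conditions a)--c).
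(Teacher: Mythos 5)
Your proof is correct, but it takes a genuinely different route from the paper's. The paper argues geometrically: in the forward direction it uses Lemma \ref{lemmap01} (the convex hull of one point from each symmetric cone contains $Y$) to find, by contradiction, an extreme point $y^{j_0}\in Y\setminus Y_h$ whose entire cone $Y^+(y^{j_0})$ is disjoint from $Y_h$, and then invokes an abstract separating hyperplane between these two disjoint convex sets to obtain $(s,s_0)$; its backward direction is stated very tersely. You instead never use Lemma \ref{lemmap01} or a separation theorem: you construct $(s,s_0)=\bigl(-(C^Tx^0+e^T),\,gx^0-h\bigr)$ explicitly from a witness $x^0\in X$, verify redundancy for (\ref{eq25}) by weak LP duality on the $u$-block, and verify redundancy of (\ref{eq002}) on $Y^+(y^r)$ via the vertex-optimality representation $s=\sum_{i\in I(y^r)}\alpha_i d_i$, $\alpha_i\ge 0$, at a minimizing extreme point; in the backward direction you extract the witness $x=\lambda/\mu$ from the Farkas multipliers of Theorem \ref{clasmft}, with boundedness of $X$ ruling out $\mu=0$. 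This buys several things: the argument is constructive (it exhibits both the separating inequality and the optimal $x$, essentially anticipating the data of Theorem \ref{thm101} and system (\ref{sure1})), it makes explicit exactly where the hypotheses enter (inconsistency of (\ref{eq1710}) forces $s\neq 0$; boundedness of $X$ forces $\mu>0$), and your backward direction actually supplies the quantitative step ($x^TCy^r+gx+ey^r\le h$, hence $y^r\notin Y_h$) that the paper's proof only asserts. The paper's route is shorter and exploits the geometry of perfect polytopes more directly. Two cosmetic points: the multiplier identity should read $A\lambda+\nu=\mu a$ rather than $A^T\lambda+\nu=\mu a^T$ (a transposition slip), and in the forward direction you only need the definition of redundancy for an inconsistent system (redundancy for one consistent subsystem), not Lemma \ref{somnen}, which is the converse reduction used correctly in your backward direction.
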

\begin{proof}
$\Rightarrow$Assume that  system  (\ref{eq1710})  has no solutions.
$\!$Then according to Corollary \ref{dimop}  of Theorem \ref{tm01}
we have \ $Y_h\neq \emptyset$. Therefore if \ $Y\not\subset Y_h$,
then among the extreme points $y^1,y^2,\dots,y^N$ of $Y$ there
exists at least one that does not belong to $Y_h$. Denote by
$y^1,y^2,\dots,y^{N'}$ the extreme points of $Y$ that do not belong
to $Y_h$ and by $y^{N'+1},y^{N'+2},\dots,y^N$ the extreme points of
$Y$ that belong to $Y_h$. Each extreme point
$y^l=(y_1^l,y_2^l,\dots,y_m^l),$ $ l\in\{1,2,\dots,N\}$,   of $Y$
represents the vertex of the cone $Y^-(y^l)$ that is determined by
the solution set of system (\ref{ldeq01}) for $r=l$. At the same
time each extreme point $y^l$ of $Y$ is the vertex  of the
symmetrical cone $Y^+(y^l)$ that is determined by the solution set
of the subsystem of linear inequalities
\begin{equation}\label{004}
-\sum_{j=1}^m d_{ij} y_j \leq -d_{i0}, \ \ \ \ i\in I(y^l),
\end{equation}
 of  inconsistent system
(\ref{eq003}).  According to Lemma \ref{lemmap01} we have \
$Y^+(y^l)\cap  Y^+(y^k)=\emptyset$ for $l\neq k$.

Let us show that among the extreme points $y^1,y^2,\dots,y^{N'}$
there exists a point $y^{j_0}$ for which the corresponding cone
$Y^+(y^{j_0})$ has no common elements with   $Y_h$,  i.e. \
$Y^+(y^{j_0})\bigcap Y_h=\emptyset $. This fact can be proved  using
the rule of contraries. If we assume that \ $Y^+(y^{r})\bigcap
Y_h\neq\emptyset,$ \ $l=1,2,\dots,N'$, then we can select from each
set $Y^+(y^{r})\bigcap Y_h$ an element $z^r$  and construct the
convex hull \ $Conv(z^1,\ z^2,\ \dots, \ z^{N'}, y^{N'+1},\
y^{N'+2}, \ \dots, \ y^N)$ \ for the set of the points \ $\{z^1,\
z^2, \ \dots,z^{N'},\  y^{N'+1},\ y^{N'+2}, \ \dots, \ y^N\}$.
Taking into account that \ $z^r\in Y_h,$ \ $r=1, \ 2, \ \dots, \ N'$
\ and \ $y^{N'+l}\in Y_h, \ r=1,2,\dots, N-N'$ \ we have \
$Conv(z^1,z^2,\dots,z^{N'}, y^{N'+1},y^{N'+2},\dots,y^N)\subseteq
Y_h$. However according to Lemma \ref{lemmap01}  we have $Y\subseteq
Conv(z^1,z^2,\dots,z^{N'}, y^{N'+1},y^{N'+2},\dots,y^N)$, i.e. we
obtain $Y\subseteq Y_h$. This is in contradiction with the condition
\ $y^r\not\in Y_h$ for $ r=1,2,\dots, N'$.

Thus, among $Y^+(y^{1}),Y^+(y^{2}),\dots,Y^+(y^{N'} )$ there exists
a cone $Y^+(y^{j_0})$ with vertex $y^{j_0}$ for which \
$Y^+(y^{j_0})\bigcap Y_h=\emptyset $. Therefore for convex sets
$Y^+(y^{j_0})$ and $Y_h$ there exists a separating hyperplane
\cite{karm,far,luy,sh1}
$$\sum\limits_{j=1}^m s_j y_j = s_0$$ such that $$\displaystyle
\sum_{j=1}^n s_j y_j < s_0, \quad \forall (y_1,y_2,\dots,y_n) \in
Y_h,$$ and
$$-\sum_{j=1}^m s_j y_j \le -s_0, \quad \forall (y_1,y_2,\dots,y_n) \in \ Y^+(y^{j_0}).$$
So, the inequality \ $\sum\limits_{j=1}^n s_j y_j \le s_0$ \ is
redundant for system (\ref{eq25}) and the inequality \
$-\sum\limits_{j=1}^n s_j y_j \le -s_0$  \ is redundant for
consistent subsystem (\ref{004}) of inconsistent system
(\ref{eq003}).

$\Leftarrow$ Assume that  for system (\ref{eq25}) there exists a
non-degenerate redundant inequality (\ref{eq001}) such that the
corresponding inequality (\ref{eq002}) is redundant for inconsistent
system (\ref{eq003}). Then there exists a consistent subsystem
(\ref{004}) of system (\ref{eq003}) for which the conical subset
$Y^-(y^{i_0})$ has no common points with $Y_h$ where $Y_h\neq
\emptyset$, i.e. $y^{j_0}\not\in Y_h$. Taking into account that
$y^{j_0}\in Y$ we obtain $Y \not \subset Y_h$.
\end{proof}
\begin{corollary} \label{cor01} \ Assume that the conditions of Lemma \ref{lemma01}\ are satisfied. Then  the minimal value $z^*$ of the objective function of
problem (\ref{eq01})-(\ref{eq03}) \ is equal to the minimal value
$h^*$ of parameter $h$ in system (\ref{eq25}) for which there exists
a non-degenerate redundant inequality
\begin{equation}\label{heq1}\sum_{j=1}^m s_j^* y_j \le s_0^* \end{equation}
such that the corresponding symmetrical inequality
\begin{equation}\label{heq2}-\sum_{j=1}^m s_j^* y_j \le -s_0^*\end{equation}
is redundant for inconsistent system (\ref{eq003}). An optimal point
$y^*$ for  problem  (\ref{eq01})-(\ref{eq03}) can be found by
solving the following system
\begin{equation}
\left\{%
\begin{array}{ll}
\displaystyle \quad \sum_{j=1}^m d_{ij} y_j \leq d_{i0}, \ \ \ \ \ \ i=1,2,\dots,p\ , \\
\displaystyle \quad \sum_{j=1}^m s_j^* y_j = s_0^*.
\end{array}%
\right. \label{eq026}
\end{equation}
\end{corollary}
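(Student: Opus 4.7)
The plan is to derive the corollary by combining Lemma \ref{lemma01} with the characterization of the optimum in Remark \ref{remcor1}, and then to extract an optimal $y^*$ from the separating hyperplane produced in the proof of Lemma \ref{lemma01}.

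First I would invoke Remark \ref{remcor1} (item 1), which identifies $z^*$ as the minimal value of $h$ for which $Y \not\subset Y_h$. Under the hypotheses of Lemma \ref{lemma01} this condition is in turn equivalent to the existence, for the consistent system (\ref{eq25}), of a non-degenerate redundant inequality $\sum_{j=1}^{m} s_j y_j \le s_0$ whose symmetrical counterpart $-\sum_{j=1}^{m} s_j y_j \le -s_0$ is redundant for the inconsistent system (\ref{eq003}). Composing these two equivalences gives $z^* = h^*$ with $h^*$ as defined in the corollary.

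To recover an optimal $y^*$, I would specialize the forward direction of the proof of Lemma \ref{lemma01} at $h = h^*$. That argument produces an extreme point $y^{j_0} \in Y \setminus Y_{h^*}$ together with vectors $s^*$ and $s_0^*$ giving a separating hyperplane: $\sum s_j^* y_j < s_0^*$ on $Y_{h^*}$ and $\sum s_j^* y_j \ge s_0^*$ on the polyhedral cone $Y^+(y^{j_0})$. Since $Y^+(y^{j_0})$ is a pointed polyhedral cone with vertex $y^{j_0}$ and the linear form $\sum s_j^* y_j$ is bounded below on it, its minimum over the cone is attained at $y^{j_0}$; replacing $s_0^*$ by this minimum only weakens the inequality on $Y_{h^*}$, so the two-sided redundancy property is preserved. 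After this renormalization we may assume $\sum s_j^* y_j^{j_0} = s_0^*$, so that $y^{j_0}$ solves system (\ref{eq026}) and the system is consistent. Conversely, any $y^* \in Y$ satisfying $\sum s_j^* y_j = s_0^*$ lies outside $Y_{h^*}$ by the strict inequality there, so by Remark \ref{remcor1} (item 3) it belongs to the set $Y \setminus Y_{h^*}$ of optimal $y$-points for problem (\ref{eq01})--(\ref{eq03}).

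The main step I expect to require care is the renormalization that places the separating hyperplane through the vertex $y^{j_0}$, making it simultaneously a supporting hyperplane of $Y^+(y^{j_0})$ at that vertex and a separator from $Y_{h^*}$. This rests on the elementary fact that a linear form bounded below on a pointed polyhedral cone attains its minimum at the apex, since no extreme ray can have negative value, forcing the infimum to be finite and realized at $y^{j_0}$. Once this normalization is in place, the equivalence between optimality of $y^*$ and its being a solution of (\ref{eq026}) follows immediately from the two separation inequalities.
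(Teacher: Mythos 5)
Your argument is correct and follows essentially the same route as the paper: the identity $z^*=h^*$ is obtained by composing Corollary \ref{corol01} (restated as item 1 of Remark \ref{remcor1}) with the equivalence of Lemma \ref{lemma01}, exactly as in the text. Your renormalization of $s_0^*$ to the minimum of the linear form over the cone $Y^+(y^{j_0})$ --- which places the separating hyperplane through the vertex $y^{j_0}$ and thereby guarantees that system (\ref{eq026}) is consistent and that each of its solutions in $Y$ lies in $Y\setminus Y_{h^*}$, hence is optimal --- is a step the paper's proof asserts without justification, and you carry it out correctly.
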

\begin{proof} Assume that for
system (\ref{eq25}) with given $h$ there exists  a non-degenerate
redundant inequality (\ref{eq001}) such that  symmetrical inequality
(\ref{eq002}) is redundant for system (\ref{eq003}). Then according
to  Corollary \ref{corol01} of Lemma \ref{thm0} and Theorem
\ref{tm01} for   problem (\ref{eq01})-(\ref{eq03}) there exists a
feasible solution such that the corresponding value of the objective
function is not greater than $h$. Therefore  the minimal value $z^*$
of the objective function of problem (\ref{eq01})-(\ref{eq03}) \ is
equal to minimal value $h^*$ of parameter $h$ in system (\ref{eq25})
for which there exists a non-degenerate redundant inequality
(\ref{heq1}) such that the corresponding symmetrical inequality
(\ref{heq2}) is redundant for inconsistent system (\ref{eq003}). In
this case $y^{j_0}=y^*\in Y\cap bd(\overline{Y}_{h^*})$ and the
optimal point $y^*\in Y$ can be found by solving system
(\ref{eq026}).
\end{proof}

\subsection{The optimality criterion for the disjoint bilinear programming problem (\ref{eq01})-(\ref{eq03})}\label{ddl08}
Based on results from previous subsection we can prove the following optimality criterion for problem (\ref{eq01})-(\ref{eq03}).
\begin{theorem}\label{thm101}
Let  disjoint bilinear programming problem (\ref{eq01})-(\ref{eq03})
be such that  system (\ref{eq03}) satisfies conditions $a)-c)$ and
the set of solutions $X$ of system (\ref{eq02}) is nonempty and
bounded. If system (\ref{eq1710}) has no solutions, then for a given
$h$ the property $Y \not \subset Y_h$ holds if and only if the
following system
\begin{equation}\label{sure1}
\left\{%
\begin{array}{ll}
\displaystyle \quad \sum_{j=1}^n a_{ij} x_j +x_{n+i}= a_{i0}, \quad\quad\quad\quad\quad\quad \ \ \ \ \ \ i=1,2,\dots,q, \\
\displaystyle \quad \sum_{i=1}^n c_{ij} x_i+\sum_{k=1}^p d_{kj} v_k = -e_j,\quad\quad\quad\quad\  \ \ \ \ \ j=1,2,\dots,m, \\
\displaystyle \quad \sum_{i=1}^n g_i x_i\ -\sum_{k=1}^p d_{k0} v_k + v_{p+1}= h, \\
\quad\quad\ \ \ x_i\ge 0,\ i=0,1,2,\dots,n+q; \ v_k \ge 0,\
k=1,2,\dots,p+1,
\end{array}%
\right.
\end{equation} has a basic feasible solution
$x_1^0,x_2^0,\dots,x_{n+q}^0,v_1^0,v_2^0,\dots,v_{p+1}^0$ \ for
which the set of vectors
\begin{equation}\label{alt1}
\bigg\{D_k=\left(\begin{array}{c}
 d_{k 1}\\[1.5mm]
 d_{k 2}\\[1.5mm]
  \vdots\\
 d_{k m}
\end{array}
 \right):\ v_k^0>0,  \quad k\in\{1,2,\dots,p\}\bigg\}
 \end{equation}
is linearly independent. The minimal value \ $h^*$ \ of parameter \
$h$ \ with the  property  $Y \not \subset Y_h$  is equal to the of
the optimal value of the objective function of problem
(\ref{eq01})-(\ref{eq03}). Moreover, for $h=h^*$ system
(\ref{sure1}) has a basic solution
$x_1^*,x_2^*,\dots,x_{n+q}^*,v_1^*,v_2^*,\dots,v_{p+1}^*$ with the
 basic component $v_{p+1}^*=0$.
\end{theorem}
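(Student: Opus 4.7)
The plan is to identify system (\ref{sure1}) with the combined Farkas certificate that simultaneously realizes the two redundancy conditions supplied by Lemma \ref{lemma01}. Recall that lemma asserts $Y \not\subset Y_h$ iff the consistent system (\ref{eq25}) admits a non-degenerate redundant inequality $\sum_j s_j y_j \le s_0$ whose symmetrical is redundant for the inconsistent system (\ref{eq003}). My approach is to apply Theorem \ref{clasmft} to the first redundancy and Theorem \ref{minfac} to the second, and then verify that the two certificates, after a scaling that normalizes the multiplier on the $h$-inequality, assemble into a solution of (\ref{sure1}) carrying the linear independence property (\ref{alt1}), and conversely.

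For the forward direction, Theorem \ref{clasmft} applied to $\sum_j s_j y_j \le s_0$ yields multipliers $\lambda_j, \mu, \nu_i, \nu_0 \ge 0$ with $\sum_j a_{ij}\lambda_j + \nu_i = \mu a_{i0}$, $s_l = -\sum_j c_{jl}\lambda_j - \mu e_l$, and $s_0 = \sum_j g_j \lambda_j - h\mu + \nu_0$; Theorem \ref{minfac} applied to $-\sum_j s_j y_j \le -s_0$ yields $v_k, v_0' \ge 0$ with $s_l = \sum_k d_{kl}v_k$, $s_0 = \sum_k d_{k0}v_k - v_0'$, and the columns $\{D_k : v_k > 0\}$ linearly independent. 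A crucial preliminary is $\mu > 0$: if $\mu = 0$, then $\sum_j a_{ij}\lambda_j \le 0$ together with $\lambda \ge 0$ forces $\lambda = 0$, since boundedness of $X$ makes its recession cone trivial; this collapses $s$ to zero and violates the non-degeneracy assumption. Once $\mu > 0$, I set $x_j := \lambda_j/\mu$, $x_{n+i} := \nu_i/\mu$, rescale $v_k := v_k/\mu$, and $v_{p+1} := (\nu_0 + v_0')/\mu$; a direct substitution confirms the three blocks of equations of (\ref{sure1}), while the linear independence condition on (\ref{alt1}) is inherited from Theorem \ref{minfac}.

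For the converse, given a basic feasible solution $(x^0, v^0, v_{p+1}^0)$ of (\ref{sure1}) with $\{D_k : v_k^0 > 0\}$ linearly independent, I take $\lambda_j = x_j^0$, $\mu = 1$, $\nu_i = x_{n+i}^0$, $\nu_0 = v_{p+1}^0$, and define $s_l := \sum_k d_{kl} v_k^0$, $s_0 := \sum_k d_{k0} v_k^0$. The three equations of (\ref{sure1}) then directly witness that $\sum_j s_j y_j \le s_0$ is a redundant inequality of (\ref{eq25}), and the linear independence of the active $D_k$ combined with Theorem \ref{minfac} certifies that the symmetrical inequality is redundant for (\ref{eq003}). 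Non-degeneracy of $s$ follows by contradiction: if all $v_k^0 = 0$, the second equation of (\ref{sure1}) reduces to $C^T x^0 = -e^T$ with $x^0 \in X$, producing a solution of (\ref{eq1710}) contrary to the standing hypothesis; hence some $v_k^0 > 0$ and linear independence then forces $s = D^T v^0 \ne 0$.

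The identification $h^* = z^*$ follows from Corollary \ref{cor01}, since both characterizations pick out the minimal $h$ with $Y \not\subset Y_h$. The claim $v_{p+1}^* = 0$ at $h = h^*$ follows by minimality: if a basic feasible solution of (\ref{sure1}) at $h^*$ had $v_{p+1}^* > 0$, then the same $(x^*, v^*)$ with $v_{p+1}$ reset to $0$ would be feasible at $h = h^* - v_{p+1}^* < h^*$, and the linear independence condition on (\ref{alt1}) would remain intact since it does not involve $v_{p+1}$, contradicting the minimality of $h^*$. The main technical obstacle I anticipate is establishing $\mu > 0$ rigorously in the forward direction and the bookkeeping that the combined Farkas certificate assembles into an actual basic feasible solution of (\ref{sure1}) with the required structure, rather than merely a feasible one.
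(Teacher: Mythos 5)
Your proposal is correct and follows essentially the same route as the paper: it combines Lemma \ref{lemma01} with the two Farkas-type certificates of Theorems \ref{clasmft} and \ref{minfac}, eliminates the $s_j$ by substitution, and normalizes the homogenizing multiplier (your $\mu$, the paper's $t$) using the boundedness of $X$, which is exactly the paper's derivation of system (\ref{eq290}) and hence of (\ref{sure1}). Your explicit handling of the non-degeneracy of $s$ via the inconsistency of (\ref{eq1710}) and your minimality argument for $v_{p+1}^*=0$ supply details the paper leaves implicit, but the underlying argument is the same.
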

\begin{proof}
According to Lemma \ref{lemma01}  the condition  $Y \not \subset
Y_h$  holds if and only if there exist $s_0,s_1,s_2,\!\dots,\!s_n$
such that (\ref{eq001}) is  non-degenerate redundant inequality for
system (\ref{eq25}) and (\ref{eq002}) is a redundant inequality for
inconsistent system (\ref{eq003}). If for (\ref{eq001}) and
\ref{eq25}) we apply the Minkowski-Farkas theorem (Theorem
\ref{clasmft}), then we obtain that (\ref{eq001}) is redundant for
(\ref{eq25}) if and only if there exist $x_0, x_1,
x_2,\dots,x_{n+q}, t$ such that
\begin{equation}
\left\{%
\begin{array}{ll}
0=\displaystyle -\sum_{j=1}^n a_{ij} x_j- x_{n+i} + a_{i0}t, \ \ \ \ i=1,2,\dots,q, \\
s_j=\displaystyle -\sum_{i=1}^n c_{ij} x_i - e_jt, \quad\quad\quad \ \ \ \ \ j=1,2,\dots,m, \\
s_0=\quad \displaystyle \sum_{i=1}^n g_i x_i -ht + x_0, \\
\quad\quad\quad\quad\quad\ \    x_j \ge 0, \ i=0,1,2,\dots,n+q; \ t\geq 0.\\
\end{array}%
\right. \label{equ31}
\end{equation}

 Now let us apply  Theorem \ref{minfac} for inequality
(\ref{eq002}) and inconsistent system (\ref{eq003}). According to
this theorem, inequality (\ref{eq002}) is redundant for system
(\ref{eq003}) if and only if there exist  $v_0,v_1,v_2,\dots, v_m$
such that
\bigskip
\begin{equation}
\left\{%
\begin{array}{ll}
\displaystyle -s_j=-\sum_{k=1}^p d_{kj} v_k, \quad\quad\ \ j=1,2,\dots,m, \\
\displaystyle -s_0=-\sum_{k=1}^n d_{k0} v_k + v_0, \\
\quad\quad\quad\quad\quad\quad\ \ v_k \ge 0, \ \ k=0,1,2,\dots,p\ ,
\end{array}%
\right. \label{eq32}
\end{equation}
where  \ $\sum_{k=1}^p d_{kj} v_i\neq 0$ \  at  least  for an index
\ $j\in \{1,2,\dots,m\}$ \ and the nonempty set of column vectors
(\ref{alt1}) \ is linearly independent.

So,  Lemma \ref{lemma01} can  be  formulated  in  terms  of
solutions of systems (\ref{equ31}), (\ref{eq32}) as follows: the
condition \ $Y \not \subset Y_h$\ holds if and only if there exist \
$s_0$, $s_1$, $s_2, \dots,s_m$, \ $ x_0, x_1, x_2,\ \dots,\
x_{n+q}$, \ $v_0, v_1,v_2,\ \dots, \ v_{p+1}, t$ \ that satisfy \
(\ref{equ31}),\ (\ref{eq32}), where \ $\sum_{k=1}^p d_{kj} v_k \neq
0$ \ at least for an index $j \in \{1,2,\dots,m\}$ and the set of
column vectors (\ref{alt1}) is linearly independent. If we eliminate
$s_1, s_2, \dots, s_m$ from (\ref{equ31}) by introducing
(\ref{eq32}) in (\ref{equ31}) and after that denote $v_{p+1}=v_0+x_0
$, then we obtain the following system
 \begin{equation}
\left\{%
\begin{array}{ll}
\displaystyle \sum_{j=1}^n a_{ij}x_j+x_{n+i} =a_{i0}t, \ \quad\quad \quad i=1,2,\dots,q, \\
\displaystyle \sum_{i=1}^n c_{ij} x_i+\sum_{k=1}^p d_{kj} v_k = -e_jt, \ \ \ \ j=1,2,\dots,m, \\
\displaystyle \sum_{i=1}^n g_i x_i\ -\sum_{k=1}^p d_{k0} v_k +v_{p+1}= ht; \\
\quad\quad \  x_j \ge 0, \ j=1,2,\dots,n+q;\ v_k \ge 0,  k=1,2,\dots,p+1, \ t\geq 0.\\
\end{array}%
\right. \label{eq290}
\end{equation}
This means that Lemma \ref{lemma01}\ \  in terms of solutions of
system \ (\ref{eq290}) \ can be formulated \ as \ follows: the
property \ $Y \not \subset Y_h$ \ holds if and only if \ system \
(\ref{eq290}) \ has \ a \ solution  \ $ x_1, \ x_2, \
\dots,x_{n+q},$ \ $v_1, \ v_2 \ ,\dots,\ v_p, \ v_{p+1},\ t$ \ where
\ $\sum_{k=1}^p d_{kj} v_i \neq 0$ \ at least for an index \ $j \in
\{1,2,\dots,m\}$ \ and the nonempty set of column vectors
(\ref{alt1}) is linearly independent. In (\ref{eq290}) the subsystem
\begin{equation}\label{lo1}
\left\{%
\begin{array}{ll}
\displaystyle \quad \sum_{j=1}^n a_{ij} x_j+x_{n+i} =a_{i0}t, \ \quad\quad \quad i=1,2,\dots,q, \\
\quad\quad\quad \  x_j \ge 0, \ j=1,2,\dots,n+q, t\geq 0,\\
\end{array}%
\right. \end{equation} has  a nonzero solution  \ $ x_1, x_2,\
\dots,\ x_{n+q}, t $ \  if and only if \ $t>0$,\  because the set of
solutions of system \ (\ref{eq02}) is nonempty and bounded. Based in
this and taking into account that we are seeking for a solution of
system (\ref{eq290}) such that  $\sum_{k=1}^p d_{kj} v_k^0 \neq 0$ \
at least for an index \ $j \in \{1,2,\dots,m\}$,   we can set \
$t=1$ \ in (\ref{eq290} )\ and therefore finally obtain that $Y \not
\subset Y_h$ if and only if system \ (\ref{sure1})\ has a solution \
$x_1^0,\ x_2^0, \ \dots, \ x_{n+q}^0, \ v_1^0, \ v_2^0, \ \dots, \
v_{p+1}^0$ \ \ for which the set of column vectors (\ref{alt1}) is
linearly independent and \ $\sum_{k=1}^p d_{kj} v_k^0 \neq 0$ \ \ at
least for an index \ $j \in \{1,2,\dots,m\}$.

 If $h^*$ is the minimal value of parameter $h$ for
which system \ (\ref{sure1}) \ has a  basic feasible solution  where
the system of vectors (\ref{alt1}) is linearly independent, then
according to Corollary \ref{cor01} of Lemma \ref{lemma01},  $h^*$ is
equal to the optimal value of the objective function of problem
(\ref{eq01})-(\ref{eq03}). Obviously, if $h=h^*$, then system
(\ref{sure1}) has a basic solution
$x_1^*,x_2^*,\dots,x_{n+q}^*,v_1^*,v_2^*,\dots,v_{p+1}^*$ with the
degenerate basic component $v_{p+1}^*=0$.
 \end{proof}
\begin{corollary}\label{blin0}
Let   disjoint bilinear programming problem
(\ref{eq01})-(\ref{eq03}) be such that system (\ref{eq03}) satisfies
conditions $a)-c)$ and the set of solutions $X$ of system
(\ref{eq02}) is nonempty and bounded. If system (\ref{eq1710}) has
no solutions, then for a given  $h$ the property $Y \not \subset
Y_h$ holds if and only if  system (\ref{sure1}) has a basic feasible
solution \
$x_1^0,x_2^0,\dots,x_{n+q}^0,v_1^0,v_2^0,\dots,v_{p+1}^0$, where
$v_{p+1}^0$ is a basic component.  The minimal value \ $h^*$ \ of
parameter \ $h$ \ with such a property is equal to the  optimal
value of objective function of problem (\ref{eq01})-(\ref{eq03}).
\end{corollary}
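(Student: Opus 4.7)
The plan is to reduce the corollary to Theorem \ref{thm101} by showing that, for a basic feasible solution of system (\ref{sure1}), the linear-independence condition on the vectors in (\ref{alt1}) is equivalent to the requirement that $v_{p+1}$ appears as a basic component. Once this equivalence is established, both the ``iff'' statement and the claim about $h^*$ transfer immediately from the theorem.

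The structural observation that drives everything is the shape of the columns of system (\ref{sure1}): the column for $v_{p+1}$ is the unit vector $e_{q+m+1}$ (a $1$ in the last equation, zeros elsewhere), whereas the column for each $v_k$ with $k\le p$ equals $(0,\dots,0,\,D_k^T,\,-d_{k0})^T$, i.e.\ zeros in the first $q$ equations, the middle-block entries $D_k$, and $-d_{k0}$ in the last equation. First I would treat the direction ``$v_{p+1}$ basic $\Rightarrow$ (\ref{alt1}) linearly independent'': if $e_{q+m+1}$ lies in a basis of size $q+m+1$, the remaining $q+m$ basic columns must be linearly independent after projection onto the first $q+m$ rows. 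Restricting this projected basis further to the middle $m$ rows, the basic $v_k$-columns (which are zero in the first $q$ rows) project to $D_k$, and linear independence of the projected basis forces these $D_k$ to be linearly independent in $\mathbb{R}^m$; in particular the subset with $v_k^0>0$ remains linearly independent, so Theorem \ref{thm101} gives $Y\not\subset Y_h$.

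For the converse direction, I start from the basic feasible solution supplied by Theorem \ref{thm101} and argue that if $v_{p+1}$ is not already basic then it can be pivoted into the basis by a (possibly degenerate) simplex step. Since $v_{p+1}^0=0$ whenever $v_{p+1}$ is nonbasic, any such pivot is degenerate and preserves feasibility; the only thing that must be verified is that $e_{q+m+1}$ is not spanned by the basic columns in a way that prevents the pivot. The linear independence of (\ref{alt1}) together with the block structure described above guarantees that the basis, restricted to the first $q+m$ rows, spans $\mathbb{R}^{q+m}$ using the $x_{n+i}$-slacks, the $x_i$-columns and the basic $v_k$-columns, leaving ``room'' in the last row for $e_{q+m+1}$ to replace some column whose last-row coefficient is nonzero. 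The resulting basis still consists of $q+m+1$ linearly independent columns and $v_{p+1}$ is now a basic component.

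The main obstacle will be the pivoting step in the converse direction: I need to argue cleanly that an appropriate outgoing column always exists and that the new basis is still a basis. A clean route is to apply Theorem \ref{minfac} (or directly Theorem \ref{clasmft}) to re-derive, starting from the redundant inequality constructed in the proof of Theorem \ref{thm101}, a basic representation in which the auxiliary slack $v_0$ used there (related to $v_{p+1}$ via $v_{p+1}=v_0+x_0$) is forced into the basis; this avoids the explicit simplex manipulation. Once the equivalence of the two characterizations of the basic feasible solution is established, the claim that the minimal $h^*$ for which such a BFS exists coincides with the optimal value of (\ref{eq01})--(\ref{eq03}) is a direct consequence of Theorem \ref{thm101}.
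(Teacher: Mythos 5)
Your overall strategy coincides with the paper's: the paper's entire proof of this corollary is the one-sentence assertion that, for system (\ref{sure1}), the existence of a basic feasible solution with the set (\ref{alt1}) linearly independent is equivalent to the existence of a basic feasible solution with $v_{p+1}$ as a basic component, after which everything follows from Theorem \ref{thm101}. Your forward direction ($v_{p+1}$ basic $\Rightarrow$ (\ref{alt1}) linearly independent) is correct: since the column of each $v_k$ in (\ref{reneq2}) equals $(0,\dots,0,D_k^T,-d_{k0})^T$ and the column of $v_{p+1}$ is the unit vector $e_{q+m+1}$, any dependence $\sum_k\lambda_kD_k=0$ among the basic $v_k$-columns yields $\sum_k\lambda_kW_{v_k}+\bigl(\sum_k\lambda_kd_{k0}\bigr)e_{q+m+1}=0$, contradicting the independence of a basis containing $e_{q+m+1}$.

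The converse is where your argument has a genuine gap. You assert that a nonbasic $v_{p+1}$ ``can be pivoted into the basis by a (possibly degenerate) simplex step'' and that ``any such pivot is degenerate and preserves feasibility.'' Neither claim is right as stated. Writing $e_{q+m+1}=\sum_{j\in S}\mu_jW_j$ over the support $S$ of the given BFS, a feasibility-preserving pivot exists only if either $e_{q+m+1}\notin\mathrm{span}\{W_j:j\in S\}$ (then a degenerate extension of the basis works) or some $\mu_j>0$ (then the ratio-test pivot works, and it is in fact non-degenerate because $\alpha_j^0>0$ on $S$). If all $\mu_j\le 0$ with at least one $\mu_j<0$, no single pivot produces a feasible basis containing $e_{q+m+1}$ --- this is precisely the obstruction that Lemma \ref{izar1}, Corollary \ref{izar2} and Section \ref{ldmd2} are devoted to. What rescues the corollary, and what you must add, is that this bad case cannot occur for the particular BFS delivered by Theorem \ref{thm101}: the first $q$ rows of $e_{q+m+1}=\sum_j\mu_jW_j$ together with $-\mu_j\ge 0$ place the vector $(-\mu_j)$ restricted to the $x$-columns in the recession cone of $X$, which is trivial because $X$ is bounded, so all $x$- and slack-coefficients vanish; the middle $m$ rows then give $\sum_k\mu_kD_k=0$ over $\{k:v_k^0>0\}$, which by the linear independence of (\ref{alt1}) forces the remaining coefficients to vanish, contradicting the last row $1=\sum_j\mu_jg_j-\sum_k\mu_kd_{k0}$. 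Your appeal to ``room in the last row'' does not substitute for this argument, and the fallback via Theorem \ref{minfac} is too vague to force the slack $v_0$ into a basis when its value is zero. With the boundedness-plus-independence argument inserted, your pivoting route does close the proof.
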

\begin{proof} In  Theorem \ref{thm101} the  condition of the existence
of a basic feasible solution $x_1^0,x_2^0,\dots,x_{n+q}^0,$ \
$v_1^0,v_2^0,\dots,v_{p+1}^0$  for system (\ref{sure1}) such that
the set of vectors (\ref{alt1}) is linearly independent is
equivalent to the condition of the existence for this system a basic
feasible solution $x_1^0,x_2^0,\dots,x_{n+q}^0,$ \
$v_1^0,v_2^0,\dots,v_{p+1}^0$ with the basic component $v_{p+1}^0$.
Therefore the corollary holds.
\end{proof}
\begin{remark} Theorem \ref{thm101}  holds also when for a given $h$ system (\ref{sure1})
has a basic feasible solution
$x_1^0,x_2^0,\dots,x_{n+q}^0,v_1^0,v_2^0,\dots,v_{p+1}^0$ for which
the set of vectors (\ref{alt1}) is an empty set because, based on
Corollary \ref{corr00}, we obtain that
$x^*=(x_1^0,x_2^0,\dots,x_n^0)^T$ is an optimal point for the the
disjoint bilinear programming problem (\ref{eq01})-(\ref{eq03}). In
this case $x^*$ together with an arbitrary $y\in Y$ ia an optimal
solution of problem (\ref{eq01})-(\ref{eq03}).
\end{remark}

\begin{remark} Theorem \ref{thm101} is valid for problem  (\ref{eq01})-(\ref{eq03})  and it is  not
valid for problem (\ref{eq1})-(\ref{eq3}) in general case.
Additionally, if for a given \ $h\in [2^{-L},2^L]$ \ system
(\ref{sure1}) has a basic solution
$x_1^0,x_2^0,\dots,x_{n+q}^0,v_1^0,v_2^0,\dots,v_{p+1}^0$ with the
properties  mentioned in Theorem \ref{thm101} and Corollary
\ref{blin0}, then  such a basic solution may be not unique.
\end{remark}
Takin into account the presented above results we can prove the
following theorem.
\begin{theorem}\label{cor102}
 Let disjoint bilinear programming problem (\ref{eq01})-(\ref{eq03}) be such that  system (\ref{eq03}) satisfies conditions
$a)-c)$ and the set of solutions $X$ of system (\ref{eq2}) is
nonempty and bounded.  Then  the minimal value $z^*$ of the
objective function of problem (\ref{eq01})-(\ref{eq03}) is equal to
the minimal value \ $h^*$ \ of parameter \ $h$ \ in system
(\ref{sure1}) for which this system has a basic feasible solution
$x_1^*,\ x_2^*,\ \dots,\ x_{n+q}^*,v_1^*,\ v_2^*,\ \dots,v_{p+1}^*$
 where either the system of column vectors
\begin{equation}\label{eq261}
\bigg\{D_k=\left(\begin{array}{c}
 d_{k 1}\\[1.5mm]
 d_{k 2}\\[1.5mm]
  \vdots\\
 d_{k m}
 \end{array}
 \right):\ v_k^*>0,  \quad k\in \{=1,2,\dots,p\}\bigg\}
\end{equation} is linearly independent or this system of vectors is an empty set.  If such a solution for system (\ref{sure1}) with $h=h^*$ is
known, then $x_1^*,x_2^*,\dots, x_n^*$ together with an arbitrary
solution  $y_1^*,y_2^*,\dots,y_m^*$ of the  system
\begin{equation}\label{ghdm1}
\left\{%
\begin{array}{ll}
\displaystyle \quad \sum_{j=1}^m d_{kj} y_j \leq d_{k0}, \ \ \ \ \ \ k=1,2,\dots,p,\  \\
\displaystyle \quad \sum_{j=1}^m (\sum_{k=1}^p d_{kj} v_k^*) y_j =
\sum_{k=1}^p d_{k0} v_k^*
\end{array}%
\right.
\end{equation}
represents  an optimal solution  $x_1^*,x_2^*,\dots,x_{n+q}^*,
y_1^*,y_2^*,\dots,y_m^*$ of disjoint programming problem
(\ref{eq01})-(\ref{eq03}).  If  the system of column vectors
(\ref{eq261}) is an empty set, then  $v_1^*=0, v_2^*=0,\dots,
v_{p+1}^*=0$ and  $\sum_{k=1}^p d_{kj} v_k^* =0,$ \ $j =1,2,\dots,
m$; in this case $x_1^*,x_2^*,\dots,x_q^*$ \ together with an
arbitrary solution $y_1,y_2,\dots, y_m$ of system (\ref{eq03})
represents a solution of problem (\ref{eq01})-(\ref{eq03}).
\end{theorem}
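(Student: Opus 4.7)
The plan is to unify Theorem \ref{thm101} with Corollary \ref{corr00} by splitting on whether the linear system (\ref{eq1710}) has solutions. The two alternatives in the statement of the theorem (the column-vector set (\ref{eq261}) being linearly independent, versus being empty) correspond exactly to these two cases, so the proof reduces to showing that one or the other holds in each scenario and that the minimal $h^*$ matches the optimal value $z^*$ in both.

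First I would dispose of the case in which system (\ref{eq1710}) is inconsistent. Here Theorem \ref{thm101} applies directly: the minimal $h^*$ for which (\ref{sure1}) has a basic feasible solution $x_1^*,\dots,x_{n+q}^*,v_1^*,\dots,v_{p+1}^*$ with (\ref{eq261}) linearly independent equals $z^*$, with $v_{p+1}^*=0$. To exhibit an optimal $y^*$, I invoke Corollary \ref{cor01}, reading off from the elimination carried out in the proof of Theorem \ref{thm101} the identifications $s_j^*=\sum_{k=1}^p d_{kj}v_k^*$ and $s_0^*=\sum_{k=1}^p d_{k0}v_k^*$. Substituting these into (\ref{eq026}) produces system (\ref{ghdm1}) verbatim, and by Corollary \ref{cor01} any $y^*\in Y$ satisfying it, paired with $(x_1^*,\dots,x_n^*)$, is optimal.

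Next, in the case where system (\ref{eq1710}) is consistent, Corollary \ref{corr00} already guarantees that any optimal $x^*$ of the LP ``minimize $gx$ subject to (\ref{eq1710})'' combined with an arbitrary $y\in Y$ is optimal for the bilinear problem, with $z^*=gx^*$. I would show this situation is captured by the empty-vector-set alternative of the theorem: setting $v_1^*=\cdots=v_p^*=0$ in (\ref{sure1}), the first two blocks of equations reduce exactly to $Ax\le a$ and $C^Tx=-e^T$, while the last becomes $gx+v_{p+1}=h$; choosing $h=gx^*$ and $v_{p+1}^*=0$ gives a (possibly degenerate) basic feasible solution of the required type. In this situation $\sum_k d_{kj}v_k^*=0$ for every $j$ and $\sum_k d_{k0}v_k^*=0$, so the second line of (\ref{ghdm1}) is trivially $0=0$ and (\ref{ghdm1}) collapses to $Dy\le d$, matching the ``arbitrary $y\in Y$'' conclusion. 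Minimality of this $h^*=gx^*$ over both alternatives follows since any smaller $h$ admitting a basic feasible solution of (\ref{sure1}) of either type would, via Theorem \ref{thm101} or via the linear-programming interpretation, produce a feasible bilinear pair with value below $z^*$, a contradiction.

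The main obstacle I anticipate is the second case: verifying that the all-zero-$v_k$ solution is genuinely a \emph{basic} feasible solution of (\ref{sure1}) (this requires a careful choice of basis that incorporates $v_{p+1}$ as a possibly degenerate basic variable, and may demand a rank computation on the reduced constraint matrix) and, more delicately, ruling out any competing basic feasible solution of (\ref{sure1}) for $h<gx^*$ with (\ref{eq261}) linearly independent and nonempty. The latter is not an immediate consequence of Theorem \ref{thm101} because that theorem was proved under the standing assumption that (\ref{eq1710}) is inconsistent; handling it will require either revisiting the proof of Lemma \ref{lemma01} without that assumption or, more economically, arguing directly from Theorem \ref{tm01} that any such basic solution would violate the optimality of $z^*=gx^*$.
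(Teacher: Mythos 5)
Your proposal is correct and follows essentially the same route as the paper: both parts rest on Theorem \ref{thm101} together with Corollary \ref{corr00}, and both recover $y^*$ by identifying $s_j^*=\sum_{k=1}^p d_{kj}v_k^*$ and $s_0^*=\sum_{k=1}^p d_{k0}v_k^*$ so that system (\ref{ghdm1}) coincides with system (\ref{eq026}) of Corollary \ref{cor01}. The only cosmetic difference is that the paper justifies the optimality of $x^*$ by passing through the min-max dual formulation of Corollary \ref{minmax11}, whereas you read the identifications directly off the elimination in the proof of Theorem \ref{thm101}; the loose ends you flag (basicness of the all-zero-$v$ solution, and minimality of $h^*$ across both alternatives when (\ref{eq1710}) is consistent) are real but are glossed over in the paper's own proof as well.
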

\begin{proof}   Let $h^*$ be the minimal value of parameter $h$ for which system (\ref{sure1}) has a basic feasible solution $x_1^*,\ x_2^*,\ \dots,\
x_{n+q}^*,v_1^*,\ v_2^*,\ \dots,\ v_{p+1}^*$ for which either  the
system of vectors (\ref{eq261}) is linearly independent or this
system of vectors is an empty set. Then  according to   Theorem
\ref{thm101} and Corollary \ref{corr00} of Theorem \ref {tm01}  the
optimal value of the objective function of problem
(\ref{eq01})-(\ref{eq03}) with properties $a)-c)$ is equal to $h^*$
and $v_{p+1}^*=0$.

Now let us prove the second part of the theorem. According to
Corollary \ref{minmax11} of Theorem \ref{extrthm2},  for disjoint
bilinear programming problem (\ref{eq01})-(\ref{eq03}) we can
consider the following min-max problem:\\
\emph{Find}
$$h^*=\min_{(x_1,x_2,\dots,x_n)\in
X}\ \ \max_{(v_1,v_2,\dots,v_p)\ \in V(x_1,x_2,\dots,x_n)}\ \ \ \
\bigg(\sum_{j=1}^n g_j x_j\ -\sum_{k=1}^p d_{k0}v_k\bigg)$$
\emph{and} \ \ $(x_1^*,\ x_2^*,\ \dots,\ x_{n}^*)\in X,$
$$X=\!\bigg\{\!\!(x_1,x_2,\dots,x_n)\big|\sum_{i=1}^na_{ij}x_j\leq\! a_{i0}, \  i=1,2,\dots,q; \ \ x_i\geq \!0, \
i=1,2,\dots,q\bigg\},$$ \emph{such that} \ \
$$h^*=\max_{(v_1,v_2\dots,v_p)\ \in
V(x_1^*,x_2^*,\dots,x_n^*)}\ \ \ \bigg(\sum_{i=1}^n g_j x_j^*\
-\sum_{k=1}^p d_{k0}v_k\bigg),$$ \emph{where}
$$V(x_1,x_2,\dots,x_n)\!=\!\bigg\{\!\!(v_1,v_2,\dots,v_p)\big| -\sum_{k=1}^pd_{kj}\!v_k=\!\sum_{i=1}^nc_{ij}x_i+e_j,   j=\!1,2,\dots m\bigg\}.$$
We can observe that  $x_1^*,\ x_2^*,\ \dots,\ x_{n+q}^*,v_1^*,\
v_2^*,\ \dots,\ v_{p+1}^*$ \ is   a solution of this min-max problem
and \ $ x^*=(x_1^*,\ x_2^*,\ \dots,\ x_{n+q}^*) $ is an optimal
point for   problem (\ref{eq01})-(\ref{eq03}) with properties
$a)-c)$.
 Taking into account that $\sum\limits_{k=1}^pd_{kj}
v_k^* =-\sum\limits_{i=1}^nc_{ij}x_i^*-e_j, \ j=1,2,\dots,m$, and
$\sum\limits_{k=1}^p d_{k0} v_k^*=- h^*+\sum\limits_{i=1}^pg_ix_i^*$
we obtain that  system (\ref{ghdm1}) coincides with system
(\ref{eq026}), because $s_j^*=\sum\limits_{k=1}^pd_{kj} v_k^*$ and
$s_0^*=\sum\limits_{k=1}^pd_{k0}v_k^*$. So, for the optimal  point $
x^*=(x_1^*,\ x_2^*,\ \dots,\ x_{n+q}^*) $  the corresponding optimal
point $y^*=(y_1^*, y_2^*,\dots, y_m^*)$ for  problem
(\ref{eq01})-(\ref{eq03}) can be found by solving system
(\ref{ghdm1}). If  the system of column vectors (\ref{eq261}) is an
empty set, then it is evident that  $v_1^*=0, v_2^*=0,\dots,
v_{p+1}^*=0$ and $\sum_{k=1}^p d_{kj} v_k^* =0,$ \ $j =1,2,\dots,
m$; in this case $x_1^*,x_2^*,\dots,x_q^*$ \ together with an
arbitrary solution $y_1,y_2,\dots, y_m$ of system (\ref{eq03})
represents a solution of problem (\ref{eq01})-(\ref{eq03}), i.e.
have the case of Corollary \ref{corr00}.
\end{proof}
\section{\!The main results concerned with  checking the conditions of Theorem \ref{thm101}}\label{polalg} The main
results of this section are concerned with studying and solving the
following problem: to determine  if for a given $h$ system
(\ref{sure1})  has a basic feasible solution
$x_1^0,x_2^0,\!\dots,x_{n+q}^0,\! v_1^0,v_2^0,\dots,v^0_{p+1},$
where $v_{p+1}^0$ is a basic component. We show that for the
considered problem there exists a polynomial algorithm that
determines if such a basic solution for system (\ref{sure1}) exists
or doesn't exist. According to Corollary \ref{blin0} of Theorem
\ref{thm101}, a basic solution for system (\ref{sure1}) that has
$v_{p+1}^0$ as a basic component exists if and only if $Y \not
\subset Y_h$. We present the results that allow to check if the
condition $Y \not \subset Y_h$ holds and show how to determine a
basic solution $x_1^0,x_2^0,\!\dots,x_{n+q}^0,\!
v_1^0,v_2^0,\dots,v^0_{p+1}$ for
 system (\ref{sure1}) where $v_{p+1}^0$ is a basic component
when the condition $Y \not \subset Y_h$  holds. Note that all
results presented in this section are related to the case when
system (\ref{eq1710}) has no solutions, because in the case when
this system is consistent the solution of the problem can be easy
found.
\subsection{Some preliminary results for the problem of checking  if $Y \not \subset
Y_h$}\label{ldmd1} Let us consider system (\ref{sure1}) in the
following matrix form
\begin{equation}\label{reneq1}
\left\{%
\begin{array}{ll}
W\alpha=W_0, \\
\ \  \ \ \alpha\geq 0,
\end{array}%
\right.
\end{equation}
where $\alpha$ is the column vector of variables of system
(\ref{sure1}), i.e.
$$\alpha^T=(x_1,x_2,\dots x_n, x_{n+1}\dots, x_{n+q},
v_1,v_2,\dots,v_{p+1}),$$  $W=(w_{i,j})$ \ is  the \
$(q+m+1)\times(n+q+p+1)$-matrix of coefficients of system
(\ref{sure1}), and \ $W_0$ \ is the column vector of right hand
members of this system, \ i.e.
\begin{equation}\label{reneq2}
\arraycolsep 5pt W =\left ( \begin{array}{llll}
A           &      \      I     &    \  \ \  0       &   \    0\\[1.2mm]
C^T           &      \      0     &    \  \    D^T       &   \    0 \\[1.2mm]
g          &      \      0     &           -d^T   &   \    1
\end{array}\right), \arraycolsep 2pt \end{equation}
and
$$W_0^T=(w_{1,0},w_{2,0},\!\dots,w_{q+m+1,0})=(a_{1,0},a_{2,0},\!
\dots,a_{q,0},-e_1,-e_2,\dots,-e_m,h).$$ In the following, we will
denote the components of column vector \ $\alpha$ \  by \
$\alpha_j$, i.e.
$$\alpha^T=(\alpha_1,\alpha_2,\dots,\alpha_{n+q},\alpha_{n+q+1},\dots,
\alpha_{n+q+p+1}),$$ and will take into account that \ \ $\alpha_1,
\ \alpha_2, \ \dots, \ \alpha_{n+q+p+1}$ \ \ represent the
corresponding variables \ $x_1,\ x_2,\dots, \ x_{n+q}, \ v_1,\
v_2,\dots, v_{p+1}$  \ for system  \ (\ref{sure1}). We denote the
column vectors of matrix \ $W$ \ by  $W_1,\ W_2,\ \dots,\
W_{n+q+p+1}$, where
$$W_j^T=(w_{1,j}, \ w_{2,j}, \ \dots,\ w_{q+m+1,j}),\ \ \ j=1, \
2,\ \dots,\ n+q+p+1 \ \ (m\leq p)$$ and each $W_j$ corresponds to
the variable $\alpha_j$.

 Recall that
$$C=(c_{ij})_{n\times m},\ \  A=(a_{ij})_{q\times
n},\ \ D=(d_{ij})_{p\times m}, \  a^T=(a_{10},  a_{20}, \dots
a_{q0}),$$
$$ d^T=(d_{10},d_{20},\dots, d_{m0}), \ g= (g_1, \ g_2, \dots, \ g_n), \
e=(e_1,\ e_2, \ \dots, \ e_m),$$ $ \ \ \ \ \ rank(A)=q,\ \ \ \ \ \
rank(D)=m \ \ \  $ \ \ \ \ and \ \ \ \ \ \ $ rank(W)= q+m+1.$

\medskip Using the vector notations \  $x^T=(x_1, \ x_2, \
\dots, \ x_n),  \ \ \overline{x}^T=(x_{n+1}, \ x_{n+2}, \ \dots, \
x_{n+q}),$ \ $ v^T=(v_1, \ v_2, \ \dots, \ v_p)$, system
(\ref{reneq1}) can be written as follows
\begin{equation}\label{reneq3}
\left\{%
\begin{array}{ll}
Ax \ +\  I \overline{x} \quad\quad\quad\quad\quad\quad \ \ \ \ \ \!= \ \ a^T,\\
\!C^T\!x \  \quad\quad\quad-  D^Tv   \quad\quad\quad \ \  =-e^T,\\
\  gx  \ \quad\quad\quad- \ \!d^T\!v \ \ + \ v_{p+1}=\ \ h,\\
\  \ x\geq\!0, \  \overline{x}\geq0, \ v\geq 0, \  \!v_{p+1}\geq \ \
0.
\end{array}%
\right.
\end{equation}

Let us assume that for a given $h\in [2^L, 2^L]$, system
(\ref{reneq1}) has  a basic feasible solution ${\alpha^0}^T$.
Without loss of generality, we may assume that the basic components
of ${\alpha^0}^T\!\!$ are
$\alpha_1^0,\alpha_2^0,\dots,\alpha_{q+\!m\!+\!1}^0$ which
correspond to the linearly independent column vectors
$W_1,W_2,\dots,W_{q+m+1}$ that form a basis for the set of column
vectors of matrix $W$  (if it is not so, then we can relabel
variables such that the first $q+m+1$ components of  ${\alpha^0}^T$
are the basic ones). We denote by $W_B$ the matrix consisting of
column vectors $W_1,W_2,\dots,W_{q+m+1}$, and by $W_N$ the matrix
consisting of column vectors $W_{q+m+2},\ W_{q+m+3},\ \dots,\
W_{n+q+p+1}$. Then system (\ref{reneq1}) can be represented as
$$\left\{%
\begin{array}{ll}
W_B \cdot\alpha_B\ +\ W_N \cdot\alpha_N=W_0, \\[1.5mm]
 \ \  \quad \  \ \alpha_B\geq 0, \quad\quad \alpha_N\geq 0,
\end{array}%
\right.$$ where
$\alpha_B^T=(\alpha_1,\alpha_2,\dots,\alpha_{q+m+1}), \ \
\alpha_N^T=(\alpha_{q+m+2},\alpha_{q+m+3},\dots,\alpha_{n+q+p+1}).$
For $W_B$ there exists $W_B^{-1}$, therefore this system is
equivalent to the system
$$\left\{%
\begin{array}{ll}
I\cdot\alpha_B+W_B^{-1}\cdot W_N\cdot\alpha_N=W_B^{-1}\cdot
W_0,\\[1.5mm]
 \ \  \ \   \alpha_B\geq 0, \quad\quad\quad\ \ \ \alpha_N\geq 0.
\end{array}\\
 \right.$$
By introducing the notations ${\bf{W}}=W_B^{-1}\cdot W_N$ and \ $
{\bf{W}}_0=W_B^{-1}\cdot W_0$, we obtain the system
\begin{equation}\label{qeq01}\left\{%
\begin{array}{ll}
\alpha_B+{\bf{W}}\cdot\alpha_N={\bf{W}}_0,\\[1.5mm]
 \alpha_B\geq 0, \quad \alpha_N\geq 0,
\end{array}\\
 \right.
 \end{equation}
where \ \  ${\bf{W}}=({\bf{w}}_{i,j})$ \ is \ a \ $(q+m+1)\times
(n+p-m)$-matrix \ \  and ${\bf{W}}_0$ is a column vector with
nonnegative components
${\bf{w}}_{1,0},{\bf{w}}_{2,0},\dots,{\bf{w}}_{q+m+1,0}$, \ i.e.\
$\alpha_B^0={\bf{W}}_0.$

Now let us consider system (\ref{qeq01}) in the following unfolded
form:
\begin{equation}\label{blin2}
\left\{%
\begin{array}{l}
\alpha_1  \ \  \quad\quad\quad\ \
+\sum\limits_{j=r+1}^{r+s-1}{\bf{w}}_{1,j}\alpha_j+{\bf{w}}_{1,r+s}\alpha_{r+s}={\bf{w}}_{1,0},\\[0.3mm]
\ \ . \ \ . \ \ \ . \ \ .\ \ . \ \ . \ \ . \ \ . \ \ . \ \ . \ \ . \
\ . \ \ . \ \ . \ \ . \ \ . \ \ . \ \ . \ \ . \ \ . \ \ . \ \ . \ \
.
\ \ \\[0.3mm]
\quad\quad\quad\alpha_{k}\quad  \ +\sum\limits_{j=r+1}^{r+s-1}{\bf{w}}_{k,j}\alpha_j\!+{\bf{w}}_{k,r+s}\alpha_{r+s}\!={\bf{w}}_{k,0},\\[0.3mm]
\ \ . \ \ . \ \ \ . \ \ .\ \ . \ \ . \ \ . \ \ . \ \ . \ \ . \ \ . \
\ . \ \ . \ \ . \ \ . \ \ . \ \ . \ \ . \ \ . \ \ . \ \ . \ \ . \ \
.
\ \ \\[1mm]
\quad \quad\quad\quad\ \  \alpha_r+\sum\limits_{j=r+1}^{r+s-1}{\bf{w}}_{r,j}\alpha_j+{\bf{w}}_{r,r+s}\alpha_{r+s}={\bf{w}}_{r,0},\\[0.5mm]
\qquad\quad\quad\quad\quad\quad\quad \alpha_j\geq 0, \quad j=1,\ 2,\
\dots,\ r+s,
\end{array}%
\right.
\end{equation}
where \  $r=q+m+1,\  \  s=n+p-m$, \ and \   ${\bf{w}}_{i,0}\geq 0, \
 i=1,\ 2, \ \dots,\ r$. \
So, ${\alpha^0}^T=(\alpha_1^0, \alpha_2^0,  \dots, \alpha_r^0,  0,
0,\dots, 0)$ is a basic feasible solution of system (\ref{blin2}),
where $\alpha_i^0={\bf{w}}_{i,0}, \ i=1,2,\dots,r$. Obviously, if in
(\ref{blin2}) one of the basic component $\alpha_i, \
i\in\{1,2,\dots,r\}$ corresponds to $v_{p+1}$ then
$v_{p+1}^0=\alpha_i^0$ is the basic component for ${\alpha^0}^T$ and
according to Theorem \ref{blin0} we have  $Y \not \subset Y_h$.
 If in (\ref{blin2}) among $\alpha_1,\alpha_2,\dots,\alpha_r$  there is
no  component $\alpha_i, \ i\in\{1,2,\dots,r\}$ that corresponds to
$v_{p+1}$, then without loss of generality we may assume that
$v_{p+1}$ in this system is represented by the nonbasic variable
$\alpha_{r+s}$ with the corresponding column vector
${\bf{W}}_{r+s}$,
${\bf{W}}_{r+s}^T=({\bf{w}}_{1,r+s},{\bf{w}}_{2,r+s},\dots,{\bf{w}}_{r,r+s}),$
where $\alpha_{r+s}=v_{p+1}$ and at least a component
${\bf{w}}_{i,r+s}$ of ${\bf{W}}_{r+s}$ is different from zero. If it
is not so, we always can relabel variable such that $\alpha_{r+s}$
will correspond to $v_{p+1}$. Then in this case the following lemma
holds.
\begin{lemma}\label{izar1}
Let  ${\alpha^0}^T=(\alpha_1^0, \alpha_2^0, \ \dots,\ \alpha_r^0, \
0, \ 0,\dots,\ 0)$  be  a basic feasible solution of system
(\ref{blin2}) where the nonbasic variable $\alpha_{r+s}$ in
(\ref{blin2}) corresponds to $v_{p+1}$. If the column vector
${\bf{W}}_{r+s}$ \ contains positive components, then $Y \not
\subset Y_h$. If the column vector ${\bf{W}}_{r+s}$ contains a
nonzero component ${\bf{w}}_{i_0,r+s}$ (positive or negative)  and \
${\bf{w}}_{i_0,0}=0$,  then $Y \not \subset Y_h$.
\end{lemma}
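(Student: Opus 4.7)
The plan is to reduce the lemma to Corollary \ref{blin0}, which says that $Y\not\subset Y_h$ is equivalent to system (\ref{sure1}) possessing a basic feasible solution in which $v_{p+1}$ is a basic variable. Since (\ref{blin2}) is merely the row-reduction of (\ref{sure1}) with respect to the current basis $W_B$, a basic feasible solution of the one system is the same object as a basic feasible solution of the other. We are given the basic feasible solution $\alpha^0$ in which $\alpha_{r+s}=v_{p+1}$ is \emph{non}basic. The whole task is therefore to exhibit a single pivot step that swaps $\alpha_{r+s}$ into the basis, while preserving both nonsingularity of the basis and nonnegativity of the right-hand side. Once such a pivot is produced in each of the two cases, Corollary \ref{blin0} delivers $Y\not\subset Y_h$.

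For the first hypothesis, that some component of ${\bf W}_{r+s}$ is strictly positive, I will apply the classical minimum-ratio test. Set $\theta^{*}=\min\{{\bf w}_{i,0}/{\bf w}_{i,r+s}:{\bf w}_{i,r+s}>0\}$, choose any index $i^{*}$ attaining this minimum, and pivot on the entry $(i^{*},r+s)$. Nonsingularity of the resulting basis is immediate because the pivot element ${\bf w}_{i^{*},r+s}>0$, and the ratio test ensures that every updated right-hand side entry remains nonnegative; in particular, $\alpha_{r+s}=v_{p+1}$ enters the basis at the nonnegative value $\theta^{*}$.

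For the second hypothesis, that some ${\bf w}_{i_0,r+s}\neq 0$ satisfies ${\bf w}_{i_0,0}=0$, I will pivot directly on $(i_0,r+s)$, irrespective of the sign of the pivot element. Nonsingularity of the new basis follows from ${\bf w}_{i_0,r+s}\neq 0$. The updated right-hand side of row $i_0$ is ${\bf w}_{i_0,0}/{\bf w}_{i_0,r+s}=0$, and for every other row $i\neq i_0$ it is
$$
{\bf w}_{i,0}-\frac{{\bf w}_{i,r+s}}{{\bf w}_{i_0,r+s}}\,{\bf w}_{i_0,0}={\bf w}_{i,0}\geq 0.
$$
Thus the pivot produces a basic feasible solution in which $\alpha_{r+s}=v_{p+1}$ is basic at value $0$.

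I expect no substantive obstacle; in both cases the argument is routine tableau algebra. The one subtlety worth emphasising is why, in the second case, one may pivot on a \emph{negative} entry: the standard prohibition stems from the danger of driving some other basic component negative, but that danger is absent here precisely because the quantity being multiplied and subtracted is zero. Having constructed in both cases a basic feasible solution of (\ref{sure1}) in which $v_{p+1}$ is a basic component, Corollary \ref{blin0} finishes the proof.
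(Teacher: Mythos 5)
Your proposal is correct and follows essentially the same route as the paper: in the first case a minimum-ratio pivot on a positive entry of ${\bf W}_{r+s}$, and in the second case a pivot on the nonzero entry ${\bf w}_{i_0,r+s}$ of a zero-right-hand-side row (where the sign is irrelevant because the subtracted multiple of ${\bf w}_{i_0,0}=0$ vanishes), in both cases producing a basic feasible solution with $\alpha_{r+s}=v_{p+1}$ basic and then invoking Corollary \ref{blin0}. The paper's proof is the same tableau argument, including the same justification for the negative-pivot subtlety you flag.
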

\begin{proof}  Assume that column vector ${\bf{W}}_{r+s}$ contains positive components. Then
we can find $i_0\in\{1,2,\dots,r\}$ such that
$$\frac{{\bf{w}}_{i_0,0}}{{\bf{w}}_{i_0,r+s}}=\min_{{\bf{w}}_{i,r+s}>0}\frac{{\bf{w}}_{i,0}}{{\bf{w}}_{i,r+s}}$$
and  determine a new basic feasible solution $\alpha'$ by applying
for system (\ref{blin2}) the standard pivoting procedure with pivot
element ${\bf{w}}_{i_0,r+s}$ in the same way as in the simplex
algorithm for linear programming. After such a pivoting procedure we
determine a new basic feasible solution $\alpha'$, the components of
which  are calculated as follows:
$$\alpha'_i=\left\{%
\begin{array}{ll}
{\bf{w}}_{i,0}-\displaystyle\frac{{\bf{w}}_{i,r+s}
{\bf{w}}_{i_0,0}}{{\bf{w}}_{i_0,r+s}}\ \ \ \
\hbox{for} \ \ \ i=1,2,\dots,r(i\neq i_0, r+s),\\[3.0mm]
\ \ \ 0\ \qquad \qquad\qquad\quad \ \ \hbox{for}\quad i= i_0, r+1, r+2,\dots,r+s-1,\\[2.0mm]
\displaystyle\frac{{\bf{w}}_{i_0,0}}{{\bf{w}}_{i_0,r+s}}\
\quad\quad\quad\quad \quad \ \  \hbox{for}\ \ \ i=r+s.
\end{array}%
\right.$$  In such a way we  determine the basic feasible solution
$${\alpha'}^T=(\alpha'_1,\ \alpha'_2,\ \dots,\ \alpha'_{i_0-1}, \ \alpha'_{r+s},\ \alpha'_{i_0+1},\ \dots,\ \alpha'_r,\ 0, \ 0,\ \dots,\ 0, \ 0 )$$
that in initial system (\ref{reneq1}) corresponds to the basis
$$\{W_1,W_2,\dots,W_{i_0-1},W_{r+s},W_{i_0+1},\dots,W_r\}$$ obtained
from $$\{W_1,W_2,\dots,W_{i_0-1},W_{i_0},W_{i_0+1},\dots,W_r\}$$ by
replacing the column vector $W_{i_0}$ with the column vector
$W_{r+s}$. We can observe that the basic component $\alpha'_{r+s}$
of this basic feasible solution corresponds to $v_{p+1}$ and
therefore according to Theorem \ref{blin0} we have $Y \not \subset
Y_h$.

In the case ${\bf{w}}_{i_0,0}=0$,  ${\bf{w}}_{i_0,r+s}\neq 0$ where
${\bf{w}}_{i_0,r+s}$ may be positive or negative, we also can apply
for system (\ref{blin2}) the same pivoting procedure with the pivot
element ${\bf{w}}_{i_0,r+s}$ in spite of the fact that the sign of
${\bf{w}}_{i_0,r+s} $  may be negative. After such a procedure we
determine the basic feasible solution
$$
{\alpha'}^T=(\alpha'_1,\alpha'_2,\dots, \alpha'_{i_0-1},
\alpha'_{r+s}, \alpha'_{i_0+1},\dots,\alpha_r, 0, 0,\dots, 0 ),$$
for which $\alpha'_{r+s}=\alpha^0_{i_0}=0$ and \ $\alpha^0=\alpha'$,
i.e. $\alpha'$ is the basic solution that corresponds to the basis
$W_1,W_2,\dots,W_{i_0-1},W_{r+s},W_{i_0+1},\dots,W_r$, obtained from
 $W_1,W_2,\dots,W_{i_0-1},W_{i_0},W_{i_0+1},\dots,W_r$, by
 replacing
$W_{i_0}$ with $W_{r+s}$. So, $\alpha'$ is a basic feasible solution
for which the component $\alpha'_{r+s}$ corresponds to $v_{p+1}$,
and therefore according to Theorem \ref{blin0} we have $Y \not
\subset Y_h$.
\end{proof}
\begin{corollary}\label{izar2}
If  $Y \subset Y_h$, then  for an arbitrary  basic feasible solution
${\alpha^0}$ of system (\ref{blin2}) the  column vectors
${\bf{W}}_{r+s}$ possess the following properties:

1) all components of  ${\bf{W}}_{r+s}$ are non-positive;

2) if ${\bf{w}}_{i,r+s}<0 $ for  some $i\in \{1,2,\dots,r\}$, then
${\bf{w}}_{i,0}>0.$
\end{corollary}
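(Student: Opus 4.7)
The plan is to derive this corollary directly as the contrapositive of Lemma \ref{izar1}. The lemma asserts two sufficient conditions on the nonbasic column vector $\mathbf{W}_{r+s}$ (associated with the variable $v_{p+1}$) which force $Y \not\subset Y_h$; the corollary simply reads these conditions backwards. So I would fix an arbitrary basic feasible solution $\alpha^0$ of (\ref{blin2}), assume $Y \subset Y_h$, and derive the two claimed properties of $\mathbf{W}_{r+s}$ in turn.

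First I would argue property (1). Suppose for contradiction that some component $\mathbf{w}_{i,r+s}$ of $\mathbf{W}_{r+s}$ is strictly positive. Then the first part of Lemma \ref{izar1}, applied verbatim to the basic feasible solution $\alpha^0$, yields $Y \not\subset Y_h$, contradicting the standing assumption. Hence every component of $\mathbf{W}_{r+s}$ must be non-positive. This uses nothing beyond the pivoting argument already packaged inside Lemma \ref{izar1}: the existence of a positive entry allows the minimum-ratio pivot to bring $\alpha_{r+s}$ (which represents $v_{p+1}$) into the basis, giving a basic feasible solution of (\ref{sure1}) with $v_{p+1}$ basic, whence $Y \not\subset Y_h$ by Corollary \ref{blin0}.

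Next I would handle property (2). Assume $\mathbf{w}_{i,r+s} < 0$ for some $i \in \{1,2,\dots,r\}$. Since we already know $\mathbf{w}_{i,0} \ge 0$ (the components of $\mathbf{W}_0$ are nonnegative because $\alpha^0$ is feasible), it suffices to exclude the possibility $\mathbf{w}_{i,0} = 0$. But if $\mathbf{w}_{i,0} = 0$ held together with $\mathbf{w}_{i,r+s} \neq 0$, the second part of Lemma \ref{izar1} would apply (the lemma explicitly allows the pivot entry to be of either sign when $\mathbf{w}_{i_0,0}=0$, because the degenerate pivot preserves feasibility while exchanging $W_{i_0}$ with $W_{r+s}$ in the basis) and we would again conclude $Y \not\subset Y_h$, contradiction. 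Therefore $\mathbf{w}_{i,0} > 0$.

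There is essentially no obstacle here: both bullets are immediate contrapositives of the two cases of Lemma \ref{izar1}, and the hypothesis $\alpha^0$ arbitrary is handled because Lemma \ref{izar1} is already stated for an arbitrary basic feasible solution. The only mild care needed is to verify that when the nonbasic variable $\alpha_{r+s}$ of the labeling happens not to correspond to $v_{p+1}$, one may (as noted before Lemma \ref{izar1}) relabel columns so that $\alpha_{r+s}$ does correspond to $v_{p+1}$, which is legitimate because the statement of the corollary concerns the column $\mathbf{W}_{r+s}$ of the matrix $\mathbf{W}=W_B^{-1}W_N$ associated precisely with $v_{p+1}$.
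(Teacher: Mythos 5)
Your proof is correct and matches the paper's intent exactly: the paper states this corollary without proof, treating it precisely as the contrapositive of the two cases of Lemma \ref{izar1}, which is what you carry out (including the observation that $\mathbf{w}_{i,0}\ge 0$ by feasibility, so excluding $\mathbf{w}_{i,0}=0$ suffices for property 2).
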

\subsection{The main results for checking if $Y\not\subset Y_h$}\label{ldmd2} In the previous
subsection we showed that the problem of the existence of a basic
solution  for system (\ref{sure1}) that has $v_{p+1}$ as a basic
component can be reduced to the problem of the existence of a basic
solution   for system (\ref{blin2}) that has $\alpha_{r+s}$ as the
basic component, where $\alpha_{r+s}$ corresponds to $v_{p+1}$. In
this subsection, we show
how to determine if such a solution for system (\ref{blin2}) exists
in the case when the column vectors ${\bf {W}} _ {r + s} $ and $
{\bf {W}} _ 0 $ satisfy the following condition
\begin{equation}\label{compl1}
 \left\{%
\begin{array}{l}
{\bf{w}}_{i,r+s}\leq 0, \ i=1,2,\dots,r;\qquad\qquad\qquad\qquad\qquad\qquad\qquad\qquad\\[2mm]
 if \ {\bf{w}}_{i,r+s}<0 \ for \ some \ i\in \{1,2,\dots,r\}, \  then \ {\bf{w}}_{i,0}>0.\qquad\qquad
\end{array}%
\right.
\end{equation}
If this condition is not satisfied, then according to Lemma
\ref{izar1} we have $Y \not \subset Y_h$. If condition
(\ref{compl1}) for system (\ref{blin2}) is satisfied, then by
relabeling its equations and variable we can represent this system
in the following form
\begin{equation}\label{blinn21}
\left\{%
\begin{array}{l}
\alpha_1 \ \ \  \quad\quad\quad \ \
+\sum\limits_{j=r+1}^{r+s-1}{\bf{w}}_{1,j}\alpha_j \quad\quad\quad\quad\quad \ \ \ \ \ \!={\bf{w}}_{1,0},\\[0.3mm]
\ \ . \ \ . \ \ \ . \ \ .\ \ . \ \ . \ \ . \ \ . \ \ . \ \ . \ \ . \
\ . \ \ . \ \ . \ \ . \ \ . \ \ . \ \ . \ \ . \ \ . \ \ . \ \ . \ \ . \ \    \\[0.3mm]
\quad\ \ \alpha_{k-1}\ \ \ \  \ +\sum\limits_{j=r+1}^{r+s-1}\!{\bf{w}}_{k-\!1,j}\alpha_j \ \quad\quad\quad\quad\quad \ \ ={\bf{w}}_{k-\!1,0},\\[1mm]
\quad\quad\quad\  \alpha_{k}\  \ \ +\sum\limits_{j=r+1}^{r+s-1}{\bf{w}}_{k,j}\alpha_j\ + \ {\bf{w}}_{k,r+s}\alpha_{r+s}\!={\bf{w}}_{k,0},\\[0.3mm]
\ \ . \ \ . \ \ . \ \ . \ \ . \ \ . \ \  .\ \ . \ \ . \ \ . \ \ . \
\ . \ \ . \ \ . \ \ . \ \ \ . \ \ .\ \ . \ \ . \ \ . \ \ . \ \ . \ \
.
\ \   \\[0.3mm]
\quad \quad\quad\quad\ \ \ \alpha_r\!+\sum\limits_{j=r+1}^{r+s-1}{\bf{w}}_{r,j}\alpha_j \ + \ {\bf{w}}_{r,r+s}\alpha_{r+s}\!={\bf{w}}_{r,0},\\[0.5mm]
\qquad\quad\quad\quad\quad\quad\quad \alpha_j\geq 0, \quad j=1,\ 2,\
\dots,\ r+s,
\end{array}%
\right.
\end{equation}
where  \ ${\bf{w}}_{i,0}\geq 0, \ i=1, 2, \dots, k-1;$ \
${\bf{w}}_{i,0}>0, \ i=k,\ k+1,\ \dots,\ r$,\ and \
${\bf{w}}_{i,r+s}\!<0,$ \ $i=k,k\!+\!1,\dots,r$. In this system, we
will assume  that the variable $\alpha_{r+s}$ corresponds to the
variable $v_{p+1}$ of system (\ref{sure1}). Note that  in
(\ref{blinn21}) the variables \
$\alpha_k,\alpha_{k+1},\dots,\alpha_r$ correspond to some of the
variables \ $v_1,v_2,\dots,v_p$ of system (\ref{sure1}), and if a
variable $\alpha_i$ corresponds to one of the bounded variables
$x_1, x_2,\dots, x_{n+q}$ of system (\ref{sure1}), then $1\leq i\leq
k$, where $q+n+1\leq k\leq r$.

\medskip
 It can be easily observed that if for an
arbitrary feasible solution of system (\ref{blinn21}) the following
condition holds
\begin{equation}\label{landd1}\sum\limits_{j=r+1}^{r+s-1}{\bf{w}}_{r,j}\alpha_j
<{\bf{w}}_{r,0},
\end{equation}
then  the problem of the existence of a basic feasible solution  for
system (\ref{blinn21}) that has $\alpha_{r+s}$ as a basic component
can be reduced to a similar problem for a new system with fewer
equations and fewer variables. Such a reduction can be made  on the
bases of the following lemma.
\begin{lemma}\label{check1} Let the  consistent system (\ref{blinn21}) with  $k<r$ be given. If for an arbitrary feasible solution of this system
condition (\ref{landd1}) holds, then system (\ref{blinn21}) has a
basic feasible solution ${\alpha^0}^T=(\alpha^0_1,\alpha^0_2,\dots,
\alpha^0_{r+s})$ that  $\alpha^0_{r+s}$ is a basic component if and
only if the following system
\begin{equation}\label{check01}
\left\{%
\begin{array}{l}
\alpha_1 \ \ \  \quad\quad\quad\ \ \ \ \
+\sum\limits_{j=r+1}^{r+s-1}{\bf{w}}_{1,j}\alpha_j \quad\quad\quad\quad\quad\ \ \ \ \ \ \ \ ={\bf{w}}_{1,0},\\[0.3mm]
\ \ . \ \ . \ \ \ . \ \ .\ \ . \ \ . \ \ . \ \ . \ \ . \ \ . \ \ . \
\ . \ \ . \ \ . \ \ . \ \ . \ \ . \ \ . \ \ . \ \ . \ \ . \ \ . \ \
.
\ \    \\[2mm]
\quad\quad\ \alpha_{k-1}\  \  \ \ \ \ \  +\sum\limits_{j=r+1}^{r+s-1}\!{\bf{w}}_{k-\!1,j}\alpha_j\ \ \ \ \quad\quad\quad\quad\quad \ \ ={\bf{w}}_{k-\!1,0},\\[1mm]
\quad\quad\quad\ \ \alpha_{k} \ \ \ \ \ \ +\sum\limits_{j=r+1}^{r+s-1}{\bf{w}}_{k,j}\alpha_j\ \ \ + \ \ \ {\bf{w}}_{k,r+s}\alpha_{r+s}\!={\bf{w}}_{k,0},\\[0.3mm]
\ \ . \ \ . \ \ \ . \ \ .\ \ . \ \ . \ \ . \ \ . \ \ . \ \ . \ \ . \
\ . \ \ . \ \ . \ \ . \ \ . \ \ . \ \ . \ \ . \ \ . \ \ . \ \ . \ \
.
\ \    \\[0.3mm]
\quad \quad\quad\quad\ \ \ \alpha_{r-1}  +\!\sum\limits_{j=r+1}^{r+s-1}{\bf{w}}_{r-1,j}\alpha_j \ +  {\bf{w}}_{r-1,r+s}\alpha_{r+s}\!={\bf{w}}_{r-1,0},\\[0.3mm]
\qquad\quad\quad\quad\quad\quad\quad \quad\quad \alpha_j\geq 0,\
\forall j\in\{1,2,\dots,r+s\}\setminus\{r\},
\end{array}%
\right.
\end{equation}
has a basic feasible solution \
${\overline{\alpha}^0}^T=(\alpha^0_1,\alpha^0_2,\dots,\alpha^0_{r-1},\alpha^0_{r+1}\dots\alpha^0_{r+s})$
\ that  \ $\alpha^0_{r+s}$ is a basic component. If \ $k=r$ \ and
for an arbitrary feasible solution of system \ (\ref{blinn21}) \
condition  \ (\ref{landd1}) \ holds, then system \ (\ref{blinn21}) \
has no  basic feasible solution
${\alpha^0}^T=(\alpha^0_1,\alpha^0_2,\dots, \alpha^0_{r+s})$ that
$\alpha^0_{r+s}$ is a basic component.
\end{lemma}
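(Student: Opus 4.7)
The plan is to use condition (\ref{landd1}) to force the variable $\alpha_r$ to be strictly positive on the feasible set of (\ref{blinn21}), and then to reduce the existence question to (\ref{check01}) by removing $\alpha_r$ from every basis. First, rewriting the $r$-th equation of (\ref{blinn21}) as $\alpha_r={\bf{w}}_{r,0}-\sum_{j=r+1}^{r+s-1}{\bf{w}}_{r,j}\alpha_j-{\bf{w}}_{r,r+s}\alpha_{r+s}$ and using ${\bf{w}}_{r,r+s}<0$ together with $\alpha_{r+s}\geq 0$, condition (\ref{landd1}) gives $\alpha_r>-{\bf{w}}_{r,r+s}\alpha_{r+s}\geq 0$ at every feasible point of (\ref{blinn21}). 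Hence $\alpha_r$ is strictly positive, and therefore must belong to the basis of every basic feasible solution of (\ref{blinn21}).

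Second, I would show that the affine lift $\bar\alpha\mapsto(\bar\alpha,\alpha_r(\bar\alpha))$ defined by the $r$-th equation is a bijection between the feasible set $P'$ of (\ref{check01}) and the feasible set $P_{\mathrm{blin}}$ of (\ref{blinn21}). Injectivity is immediate. For surjectivity one needs $\alpha_r(\bar\alpha)\geq 0$ for every $\bar\alpha\in P'$, which I would establish by convexity: assuming the contrary, take some $\bar\alpha^*\in P'$ with $\alpha_r(\bar\alpha^*)\leq 0$ and some $\bar\alpha^0$ obtained by dropping the $\alpha_r$-coordinate of a feasible solution of (\ref{blinn21}) (which exists since that system is consistent), so that $\alpha_r(\bar\alpha^0)>0$; along the segment joining them the affine map $\alpha_r(\cdot)$ vanishes at some intermediate $\bar\alpha(t^*)\in P'$, and the lifted vector $(\bar\alpha(t^*),0)$ is then a feasible solution of (\ref{blinn21}) with $\alpha_r=0$. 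Substituting back into the $r$-th equation and applying condition (\ref{landd1}) yields $-{\bf{w}}_{r,r+s}\alpha_{r+s}(t^*)<0$, contradicting ${\bf{w}}_{r,r+s}<0$ and $\alpha_{r+s}(t^*)\geq 0$.

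The bijection reduces the comparison of basic feasible solutions to a block-triangular calculation. The column of $\alpha_r$ in (\ref{blinn21}) is the standard basis vector $e_r$, so for any candidate basis $B'$ of (\ref{check01}) the augmented set $B=B'\cup\{\alpha_r\}$ has basis matrix of block form
$$M=\begin{pmatrix} B_{\mathrm{check}} & 0\\ \ast & 1\end{pmatrix},$$
with $\det M=\det B_{\mathrm{check}}$. Thus $B$ is a basis of (\ref{blinn21}) iff $B'$ is a basis of (\ref{check01}). Combined with the nonnegativity of the lifted $\alpha_r$ and the forced basicness of $\alpha_r$ in every basic feasible solution of (\ref{blinn21}), this yields the two-way correspondence between basic feasible solutions of (\ref{blinn21}) and (\ref{check01}) that contain $\alpha_{r+s}$ as a basic component.

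For the case $k=r$, only the $r$-th equation contains $\alpha_{r+s}$, so the column of $\alpha_{r+s}$ equals ${\bf{w}}_{r,r+s}\cdot e_r$ and is parallel to the column $e_r$ of $\alpha_r$. Since these two columns are linearly dependent they cannot both appear in a basis; combined with the mandatory basicness of $\alpha_r$ established in the first paragraph, this rules out every basic feasible solution of (\ref{blinn21}) containing $\alpha_{r+s}$ as a basic component. The principal obstacle I anticipate is the clean justification of surjectivity of the lift, which is the only place where the strict form of the inequality in condition (\ref{landd1}) enters in an essential way.
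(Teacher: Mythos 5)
Your proof is correct and follows essentially the same route as the paper: condition (\ref{landd1}) together with $\mathbf{w}_{r,r+s}<0$ and $\alpha_{r+s}\geq 0$ forces $\alpha_r>0$ at every feasible point, so $\alpha_r$ lies in every basis and the $r$-th equation together with the variable $\alpha_r$ can be eliminated, while for $k=r$ the column of $\alpha_{r+s}$ is parallel to that of $\alpha_r$ and so cannot join it in a basis. You additionally supply two details the paper's one-line argument leaves implicit --- the convexity argument showing that every feasible point of (\ref{check01}) lifts to a feasible point of (\ref{blinn21}) (so the two feasible sets genuinely correspond), and the block-triangular determinant identifying bases of the two systems --- and both are sound.
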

\begin{proof} Indeed, if $k<r$ and  for an arbitrary
solution of system (\ref{blinn21}) condition (\ref{landd1}) holds,
then $\alpha_r>0$ for an arbitrary solution of system
(\ref{blinn21}), i.e. $\alpha_r$ is the basic component for an
arbitrary basic solution of this system. Therefore we can eliminate
the last equation from (\ref{blinn21}) and conclude that system
(\ref{blinn21}) has a basic solution
${\alpha^0}^T=(\alpha^0_1,\alpha^0_2,\dots,\alpha^0_{r+s})$ such
that $\alpha^0_{r+s}$ is a basic  component if and only if system
(\ref{check01}) has a basic solution
${\overline{\alpha}^0}^T=(\alpha^0_1,\alpha^0_2,\dots,\alpha^0_{r-1},\alpha^0_{r+1}\dots\alpha^0_{r+s})$
such that $\alpha^0_{r+s}$ is  a basic  component. In the case $k=r$
it is evident that system (\ref{blinn21}) couldn't contain  a basic
feasible solution ${\alpha^0}^T=(\alpha^0_1,\alpha^0_2,\dots,
\alpha^0_{r+s})$ that has $\alpha^0_{r+s}$ as a basic component.
\end{proof}
\begin{corollary}\label{clly}
If for an arbitrary feasible solution of system (\ref{blinn21}) the
following  condition holds
$$\sum\limits_{j=r+1}^{r+s-1}{\bf{w}}_{i,j}\alpha_j <
{\bf{w}}_{i,0}, \ i=k,k+1,\dots,r,$$ then system (\ref{blinn21}) has
no  basic feasible solution
${\alpha^0}^T=(\alpha^0_1,\alpha^0_2,\dots,\alpha^0_{r+s})$ that has
$\alpha^0_{r+s}$ as a basic component.
\end{corollary}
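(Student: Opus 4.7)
The plan is to argue by contradiction, along the same lines as the $k=r$ branch of Lemma \ref{check1}, but applied simultaneously to all indices $i\in\{k,k+1,\dots,r\}$. Suppose that system (\ref{blinn21}) possesses a basic feasible solution $\alpha^*$ with $\alpha^*_{r+s}$ as a basic component; the goal is to produce a linear dependence among the basis columns.

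The first step I would carry out is to show that $\alpha^*_i$ must be strictly positive (hence basic) for every $i\in\{k,\dots,r\}$. Applying the hypothesis to the feasible point $\alpha^*$ gives $\sum_{j=r+1}^{r+s-1}{\bf{w}}_{i,j}\alpha^*_j<{\bf{w}}_{i,0}$; solving the $i$-th equation of (\ref{blinn21}) for $\alpha^*_i$ and using ${\bf{w}}_{i,r+s}<0$ together with $\alpha^*_{r+s}\ge 0$ yields
\[
\alpha^*_i=\Bigl({\bf{w}}_{i,0}-\sum_{j=r+1}^{r+s-1}{\bf{w}}_{i,j}\alpha^*_j\Bigr)+|{\bf{w}}_{i,r+s}|\,\alpha^*_{r+s}>0.
\]
Because nonbasic components vanish in a basic feasible solution, this forces each of $\alpha^*_k,\dots,\alpha^*_r$ to be basic, so together with $\alpha^*_{r+s}$ the chosen basis contains the $r-k+2$ columns $W_k,W_{k+1},\dots,W_r,W_{r+s}$.

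The crux of the argument is then to observe that these columns are linearly dependent. System (\ref{blinn21}) is displayed in tableau form relative to the initial basis $W_1,\dots,W_r$, so each $W_i$ with $i\le r$ is the standard basis vector $e_i$. The precise form of (\ref{blinn21}) also tells us that the first $k-1$ equations contain no term in $\alpha_{r+s}$, i.e.\ ${\bf{w}}_{i,r+s}=0$ for every $i<k$. Consequently
\[
W_{r+s}=\sum_{i=1}^{r}{\bf{w}}_{i,r+s}\,e_i=\sum_{i=k}^{r}{\bf{w}}_{i,r+s}\,W_i,
\]
which is a nontrivial vanishing linear combination of $W_k,W_{k+1},\dots,W_r,W_{r+s}$ (the coefficient of $W_{r+s}$ being $1$). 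Since any subset of the basis of a basic feasible solution must be linearly independent, we reach the desired contradiction.

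The only delicate point I foresee is the bookkeeping: correctly extracting the vanishing pattern ${\bf{w}}_{i,r+s}=0$ for $i<k$ from the exact display of (\ref{blinn21}), and being precise about the distinction between basic/nonbasic and zero/nonzero components (in particular that $\alpha^*_i>0$ rules out the nonbasic case even in the presence of degeneracy). Once these are settled, the argument is a one-shot generalisation of the $k=r$ case treated in Lemma \ref{check1}, where the same identity degenerates to $W_{r+s}={\bf{w}}_{r,r+s}W_r$.
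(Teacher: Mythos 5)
Your proof is correct, but it takes a more direct route than the one the paper intends. The paper obtains this corollary by iterating Lemma \ref{check1}: the strict inequality for $i=r$ forces $\alpha_r>0$ in every feasible solution, so the last equation can be deleted and the question reduced to the smaller system (\ref{check01}); repeating this for $i=r-1,\dots,k$ one eventually lands in the $k=r$ situation of Lemma \ref{check1}, where the nonexistence of a basic solution with $\alpha_{r+s}$ basic is declared ``evident.'' You instead observe in one shot that the hypothesis, combined with ${\bf{w}}_{i,r+s}<0$ and $\alpha_{r+s}\ge 0$, forces $\alpha^*_i>0$ simultaneously for all $i=k,\dots,r$, so any basis supporting such a solution would have to contain the columns of $\alpha_k,\dots,\alpha_r$ and of $\alpha_{r+s}$; since in the tableau the column of $\alpha_{r+s}$ is $\sum_{i=k}^{r}{\bf{w}}_{i,r+s}e_i$ (its first $k-1$ entries vanish), these $r-k+2$ columns are linearly dependent, a contradiction. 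Your argument is cleaner, avoids the induction on the number of equations, and in passing supplies the justification that the paper omits for the $k=r$ case of Lemma \ref{check1} (there the identity degenerates to $W_{r+s}={\bf{w}}_{r,r+s}W_r$). The only cosmetic point worth tightening is the identification of $W_i$ with $e_i$: it is the transformed columns ${\bf{W}}_i=W_B^{-1}W_i$ that equal $e_i$, but since $W_B^{-1}$ is invertible, linear dependence of the tableau columns is equivalent to linear dependence of the original ones, so the contradiction with the independence of the basis stands.
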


  In the case when for an arbitrary feasible solution  of  system  (\ref{blinn21}) the following condition holds
\begin{equation}\label{land1}\sum\limits_{j=r+1}^{r+s-1}{\bf{w}}_{r,j}\alpha_j
\leq {\bf{w}}_{r,0}
\end{equation}
 Lemma \ref{check1} can be specified  as
 follows.
\begin{lemma}\label{check3} Assume that for an arbitrary feasible
solution of consistent system (\ref{blinn21}) with \ $k\leq r$ \
condition (\ref{land1}) holds, and at least for a  feasible
solution ${\alpha}^T=(\alpha_1,\alpha_2,\dots,\alpha_{r+s})$  of
system (\ref{blinn21}) condition (\ref{land1})  is satisfied as
equality, i.e.
$\sum\limits_{j=r+1}^{r+s-1}{\bf{w}}_{r,j}\alpha_j={\bf{w}}_{r,0}.$
Then  system (\ref{blinn21})  has a basic feasible solution
 ${\alpha^0}^T=(\alpha^0_1,\alpha^0_2,\dots,\alpha^0_{r+s})$
that has  $\alpha^0_{r+s}=0$ as a degenerate basic component. Such a
basic solution for system (\ref{blinn21})   can be found by using
 an  optimal  basic  solution
${\overline{\alpha}^0}^T=(\overline{\alpha}^0_1,\overline{\alpha}^0_2,\dots,\overline{\alpha}^0_{k-1},\overline{\alpha}^0_{r+1},
\overline{\alpha}^0_{r+2},\dots,\overline{\alpha}^0_{r+s-1})$ \ of
the following linear programming problem:\\
Maximize
\begin{equation}\label{doga1}z_r=\sum\limits_{j=r+1}^{r+s-1}{\bf{w}}_{r,j}\alpha_j
-{\bf{w}}_{r,0} \ \ \end{equation} subject to
\begin{equation}\label{check011}
\left\{%
\begin{array}{l}
\alpha_1  \ \  \quad\ \ \ \ \ \ \
+ \ \sum\limits_{j=r+1}^{r+s-1}{\bf{w}}_{1,j}\alpha_j \quad\quad\quad\ \      ={\bf{w}}_{1,0},\\[0.5mm]
\quad\ \alpha_2   \  \ \ \ \ \ \ \ + \ \sum\limits_{j=r+1}^{r+s-1}\!{\bf{w}}_{2,j}\alpha_j \quad\quad\quad\quad    ={\bf{w}}_{2,0},\\[0.3mm]
\ \ . \ \ . \ \ \ . \ \ .\ \ . \ \ . \ \ . \ \ . \ \ . \ \ . \ \ . \
\ . \ \ . \ \ . \ \ . \ \ . \ \ . \ \ . \ \ .     \\[0.3mm]
\quad\quad\  \alpha_{k-1} \  \ \ + \ \sum\limits_{j=r+1}^{r+s-1}\!{\bf{w}}_{k-\!1,j}\alpha_j \!\quad\quad\quad    ={\bf{w}}_{k-\!1,0},\\[0.3mm]
 \  \alpha_j\geq 0,\
 j=1,2,\dots,k\!-\!1,r\!+\!1,r\!+\!2,\dots,r\!+\!s-\!1.
\end{array}%
\right.
\end{equation}
If  \ ${\overline{\alpha}^0}^T$ \  is known, \ then a basic feasible
solution
 ${\alpha^0}^T=(\alpha^0_1,\alpha^0_2,\dots,\alpha^0_{r+s})$ of
system (\ref{blinn21}) that has  $\alpha^0_{r+s}=0$ as a degenerate
basic component can be found as follows
\begin{equation}\label{jora1}
\!\!\!\alpha^0_i\!=\!\left\{%
\begin{array}{lll}
\quad\quad\quad\quad    \overline{\alpha}^0_i, & \  \hbox{ if
\ $i=1,2,\dots,k\!-\!1,r\!+\!1,\dots,r\!+\!s\!\!-\!\!1$,}\\[1mm]
-\sum\limits_{j=r+1}^{r+s-1}{\bf{w}}_{i,j}\overline{\alpha}^0_j+{\bf{w}}_{i,0}, &  \  \hbox{ if \ $i=k,k+1,\dots,r-1$,}\\[1mm]
\quad\quad\quad\quad       0, & \ \
\hbox{if \  $i=r, \ r+s$,}\\
\end{array}
\right.
\end{equation}
where  $\alpha_r^0=0$ is  a nonbasic component of ${\alpha^0}^T$. In
this case inequality (\ref{land1}) is weekly redundant for the
following system
\begin{equation}\label{bingo28}
\left\{%
\begin{array}{l}
\sum\limits_{j=r+1}^{r+s-1}{\bf{w}}_{i,j}\alpha_j\leq{\bf{w}}_{i,0}, \quad\quad \ \!i=1,2,\dots,k\!-\!1,\\[0.5mm]
\quad\quad\quad\quad \  \alpha_j\geq 0,\quad\quad\quad\
j=r\!+\!1,r\!+\!2,\dots,r\!+\!s\!-\!1,
\end{array}%
\right.
\end{equation}
and an arbitrary  inequality
\begin{equation}\label{jora3}\sum_{j=r+1}^{r+s-1}{\bf{w}}_{i,j}\alpha_j\leq{\bf{w}}_{i,0},
\ i\in\{k,k+1,\dots,r\}
\end{equation}
 is redundant for system (\ref{bingo28}).
 \end{lemma}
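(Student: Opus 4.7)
The strategy is to reduce the existence of a basic feasible solution of (\ref{blinn21}) with $\alpha^{0}_{r+s}$ as a degenerate basic component to the computation of an optimal vertex of the auxiliary linear program (\ref{doga1})-(\ref{check011}), in which the variable $\alpha_{r+s}$ has been effectively frozen at $0$; the lift is provided by formula (\ref{jora1}). The plan decomposes into three tasks: show that the LP maximum equals $0$; verify that $\alpha^{0}$ satisfies the equations of (\ref{blinn21}) together with the required nonnegativity; and confirm that swapping the column $W_{r}$ out for $W_{r+s}$ in the current basis produces a valid basic feasible solution with $\alpha^{0}_{r+s}=0$ as the degenerate basic component.

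First I would establish that LP (\ref{doga1})-(\ref{check011}) is consistent with maximum $z_{r}^{*}=0$. Both the consistency and the bound $z_{r}^{*}\geq 0$ follow by restricting the hypothesized equality-attaining $\alpha^{*}$ to the LP's variables: the first $k-1$ equations of (\ref{blinn21}) do not involve $\alpha_{r+s}$, so they project verbatim onto (\ref{check011}), and on this projection $z_{r}=0$. For the upper bound $z_{r}^{*}\leq 0$, given any LP-feasible $\tilde{\alpha}$ I would extend it to a feasible solution $\hat{\alpha}$ of (\ref{blinn21}) by choosing $\hat{\alpha}_{r+s}\geq 0$ large enough and setting $\hat{\alpha}_{i}={\bf{w}}_{i,0}-\sum_{j=r+1}^{r+s-1}{\bf{w}}_{i,j}\tilde{\alpha}_{j}-{\bf{w}}_{i,r+s}\hat{\alpha}_{r+s}$ for $i=k,\dots,r$; since ${\bf{w}}_{i,r+s}<0$, these values become nonnegative for $\hat{\alpha}_{r+s}$ sufficiently large. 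Applying the hypothesis (\ref{land1}) to $\hat{\alpha}$ and noting $\hat{\alpha}_{j}=\tilde{\alpha}_{j}$ for $j=r+1,\dots,r+s-1$ then yields $z_{r}(\tilde{\alpha})\leq 0$. Consequently the LP optimum is exactly $0$, which is the weak redundancy of (\ref{land1}) for (\ref{bingo28}).

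Next, taking an optimal basic feasible solution $\overline{\alpha}^{0}$ of the LP and defining $\alpha^{0}$ via (\ref{jora1}), I would verify the equations of (\ref{blinn21}) directly: the first $k-1$ are the constraints of (\ref{check011}); those for $i=k,\dots,r-1$ hold by the very definition of $\alpha^{0}_{i}$ together with $\alpha^{0}_{r+s}=0$; and the $r$-th reduces to $\sum_{j=r+1}^{r+s-1}{\bf{w}}_{r,j}\overline{\alpha}^{0}_{j}={\bf{w}}_{r,0}$, which is the LP-optimality condition $z_{r}^{*}=0$. Nonnegativity of $\alpha^{0}_{i}$ for $i=1,\dots,k-1$ and of $\alpha^{0}_{j}$ for $j=r+1,\dots,r+s-1$ is inherited from $\overline{\alpha}^{0}$, while $\alpha^{0}_{r}=\alpha^{0}_{r+s}=0$ by construction. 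Since ${\bf{w}}_{r,r+s}\neq 0$, the column $W_{r+s}$ can be substituted for $W_{r}$ in the initial basis $\{W_{1},\dots,W_{r}\}$ by an elementary pivot, yielding a valid basis under which $\alpha^{0}_{r+s}$ is precisely a degenerate basic component.

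The main obstacle is establishing the remaining nonnegativity $\alpha^{0}_{i}\geq 0$ for $i=k,\dots,r-1$, i.e., that $\sum_{j=r+1}^{r+s-1}{\bf{w}}_{i,j}\overline{\alpha}^{0}_{j}\leq{\bf{w}}_{i,0}$ at every optimum $\overline{\alpha}^{0}$; this is the redundancy assertion for an arbitrary (\ref{jora3}) with respect to (\ref{bingo28}). For $i=r$ this has just been shown. For the remaining indices I would exploit the algorithmic context in which Lemma \ref{check3} is invoked: Lemma \ref{check1} is applied iteratively to every index for which the strict inequality $\sum_{j=r+1}^{r+s-1}{\bf{w}}_{i,j}\alpha_{j}<{\bf{w}}_{i,0}$ holds throughout the feasible set, so each surviving $i\in\{k,\dots,r-1\}$ satisfies the same bilateral property as $i=r$, namely that (\ref{jora3}) is tight on (\ref{blinn21}) at some feasible point while holding as an inequality everywhere. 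Repeating the extension argument of the first paragraph separately for each such $i$ then transfers (\ref{jora3}) from the feasible set of (\ref{blinn21}) to that of (\ref{bingo28}), yielding the desired redundancy. Once this is in place, $\alpha^{0}$ is confirmed to be a basic feasible solution of (\ref{blinn21}) with $\alpha^{0}_{r+s}=0$ as its degenerate basic component, completing the proof.
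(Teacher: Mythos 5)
Your first two steps are sound, and the first is in fact more elementary than the paper's: where the paper dualizes the LP \emph{Maximize (\ref{doga1}) subject to (\ref{blinn21})}, observes that the dual constraint $\sum_{i=k}^{r}{\bf{w}}_{i,r+s}\beta_i\ge 0$ together with ${\bf{w}}_{i,r+s}<0$ forces $\beta_i=0$ for $i\ge k$, and so identifies (\ref{bostan1})--(\ref{bostan2}) as the common dual of the LPs over (\ref{blinn21}) and over (\ref{check011}), you obtain the same conclusion $z_r^*=0$ by a direct primal argument (project the equality-attaining point down; lift an arbitrary LP-feasible point up by taking $\alpha_{r+s}$ large). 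Either route gives the weak redundancy of (\ref{land1}) for (\ref{bingo28}).

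The gap is in your resolution of what you correctly single out as the main obstacle: the nonnegativity $\alpha^0_i\ge 0$ for $i=k,\dots,r-1$, equivalently the redundancy of every inequality (\ref{jora3}) for (\ref{bingo28}). You appeal to the algorithmic context, claiming that after Lemma \ref{check1} has been applied every surviving index $i\in\{k,\dots,r-1\}$ satisfies the same bilateral property as $i=r$. That dichotomy is not exhaustive: Lemma \ref{check1} eliminates only indices for which the strict inequality (\ref{landd1}) holds on all of (\ref{blinn21}), but since ${\bf{w}}_{i,r+s}<0$ an index can survive while $\sum_{j}{\bf{w}}_{i,j}\alpha_j$ \emph{exceeds} ${\bf{w}}_{i,0}$ on part of the feasible set, the excess being absorbed by the term ${\bf{w}}_{i,r+s}\alpha_{r+s}$. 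Concretely, take $k=2$, $r=3$, a single nonbasic variable $t=\alpha_{r+1}$ and rows $\alpha_1+t=1$, $\alpha_2+2t-\alpha_{r+s}=1$, $\alpha_3+t-\alpha_{r+s}=1$: the hypotheses of the lemma hold for row $r=3$ (always $t\le 1$, with equality at the feasible point $(0,0,1,1,1)$), yet the unique LP optimum is $t=1$, where the row-$2$ inequality $2t\le 1$ fails and the lift (\ref{jora1}) gives $\alpha^0_2=-1$; one checks there is in fact no basic feasible solution of this system with $\alpha_{r+s}=0$ basic. So the missing redundancy of (\ref{jora3}) cannot be derived from the stated hypotheses at all --- it is an independent condition, which is precisely why it reappears as an explicit extra hypothesis in Theorem \ref{capmt1}. (The paper's own proof is silent at the same point, asserting the nonnegativity of the lifted components without argument, so you have located the weak joint correctly; but your proposed patch via the algorithmic context does not close it.)
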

\begin{proof} Assume that the conditions of the lemma are satisfied and consider the following linear programming problem:
 \emph{Maximize  (\ref{doga1}) subject to
(\ref{blinn21})}. Then this problem has solutions and the optimal
value of the objective function (\ref{doga1}) is equal to zero. If
we dualize the linear programming problem
(\ref{blinn21}), (\ref{doga1}), then we obtain the following problem:\\
 \emph{Minimize}
\begin{equation}\label{nata1}z'=\sum_{i=1}^{k-1}{\bf{w}}_{i,0}\beta_i \ + \
\sum_{i=k}^{r}{\bf{w}}_{i,0}\beta_i-{\bf{w}}_{r,0}\end{equation}
\emph{subject to}
\begin{equation}\label{nata2}\left\{%
\begin{array}{l}
\ \ \sum\limits_{i=1}^{k-1}{\bf{w}}_{i,j}\beta_i\ \ + \ \sum\limits_{i=k}^{r}{\bf{w}}_{i,j}\beta_i\  \geq {\bf{w}}_{r,j}, \ \  j=\overline{r+1,r+s-1},\\[0.5mm]
\quad\quad \quad\quad\quad\ \ \ \ \ \sum\limits_{i=k}^{r}{\bf{w}}_{i,r+s}\beta_i\geq 0,\\[0.3mm]
\quad \quad \ \ \ \ \ \beta_i\geq 0, \ \ i=1,2,\dots,r.
\end{array}%
\right.\end{equation}
This problem has solutions and the optimal
value of the objective function is  equal to zero. Taking into
account that \ ${\bf{w}}_{i,r+s}<0, \ i=k,\  k+1, \ \dots, \ r$ \ we
obtain  \ $\beta_i=0, \ i=k, k+1,\dots, r$. Therefore this dual
problem can
be written as follows: \\
\emph{Minimize}
\begin{equation}\label{bostan1}
z'=\sum_{i=1}^{k-1}{\bf{w}}_{i,0}\beta_i -{\bf{w}}_{r,0}
\end{equation}
\emph{subject to}
\begin{equation}\label{bostan2}
\left\{%
\begin{array}{l}
 \ \ \sum\limits_{i=1}^{k-1}{\bf{w}}_{i,j}\beta_i\geq {\bf{w}}_{r,j}, \ \  j=\overline{r+1,r+s-1},\\[0.3mm]
\quad \quad \quad \ \ \ \beta_i\geq 0, \ \ j=1,2,\dots,r+s-1.
\end{array}%
\right. \end{equation} Problem (\ref{bostan1}), (\ref{bostan2}) can
be regarded as the dual problem for linear programming problem
(\ref{doga1}), (\ref{check011}) and for linear programming problem
(\ref{doga1}), (\ref{bingo28}), i.e. for these problems there exist
optimal solutions and the corresponding optimal values of the
objective functions are equal to zero. So, the linear programming
problem (\ref{doga1}),(\ref{bingo28}) has  an optimal solution that
is attained in an extreme point \
$(\overline{\alpha}^0_{r+1},\overline{\alpha}^0_{r+2},\dots,\overline{\alpha}^0_{r+s-1})$
\ of the set of solutions of system \ (\ref{bingo28}) \ and the
optimal value of the objective function \ (\ref{doga1}) \ is equal
to zero. This means  that  \
$\sum\limits_{j=r+1}^{r+s-1}{\bf{w}}_{r,j}\overline{\alpha}_j^0
-{\bf{w}}_{r,0}=0$, i.e. inequality  (\ref{land1}) is weekly
redundant for system (\ref{bingo28}). Moreover, inequality
(\ref{land1}) is  weakly redundant for the following system
\begin{equation}\label{capjor1}
\left\{%
\begin{array}{ll}\displaystyle\sum_{j=r+1}^{r+s-1}{\bf{w}}_{i,j}\alpha_j\qquad\qquad\qquad\leq{\bf{w}}_{i,0},
\ \ i=1,2,\dots,k-1,\\[1mm]
\displaystyle\sum_{j=r+1}^{r+s-1}{\bf{w}}_{i,j}\alpha_j+{\bf{w}}_{i,r+s}\alpha_{r+s}\leq{\bf{w}}_{i,0},\
\ i=k,k+1,\dots,r,\\[1mm]
\quad\quad \ \alpha_j\geq 0, j=r+1,r+2,\dots,r+s,
\end{array}%
\right.
\end{equation}
where \
$(\overline{\alpha}_{r+1}^0,\overline{\alpha}_{r+2}^0,\dots,\overline{\alpha}^0_{r+s-1},\overline{\alpha}^0_{r+s})$
\  with \ $\overline{\alpha}_{r+s}^0=0$  represents an extreme point
of the set of solutions of system (\ref{capjor1}) in which the
maximal value of the objective function   (\ref{doga1}) on the set
of solutions of this system  is attained, i.e.
$z_r^0=\sum\limits_{j=r+1}^{r+s-1}{\bf{w}}_{r,j}\overline{\alpha}_j^0
-{\bf{w}}_{r,0}=0$. Therefore $(\overline{\alpha}^0_1,
\overline{\alpha}^0_2,  \dots,\overline{\alpha}^0_{r+s-1},
\overline{\alpha}^0_{r+s})$  with
$\overline{\alpha}^0_i=-\sum\limits_{j=r+1}^{r+s-1}{\bf{w}}_{i,j}\overline{\alpha}_j^0
+{\bf{w}}_{r,0}\geq 0,$ \ $i=1,2,\dots,r;$ \
$\overline{\alpha}_{r+s}^0=0,$ represents an optimal basic solution
of linear programming problem: \emph{Maximize the objective function
(\ref{doga1})} subject to (\ref{blinn21}). Thus, we may conclude
that ${\alpha^0}^T=(\alpha^0_1,\alpha^0_2,\dots,\alpha^0_{r+s})$
with the components determined according to (\ref{jora1}) represents
a basic
 feasible solution for  system \ (\ref{blinn21}) that has
\ $\alpha^0_{r+s}=0$ \ as the degenerate basic component.
 Indeed, from (\ref{jora1}) we can see that
${\alpha^0}^T$ contains no more than $k-1$  basic components of
${\overline{\alpha}^0}^T,$ no more than $r-k$ components
$\alpha_i^0=-\sum\limits_{j=r+1}^{r+s-1}{\bf{w}}_{i,j}\overline{\alpha}_j^0
+{\bf{w}}_{r,0}\geq 0,$ \ $i=k,k+1,\dots,r-1$, and the degenerate
basic component $\alpha_{r+s}^0=0$, i.e. in the whole we obtain that
${\alpha^0}^T$ contains no more than $r$ basic components, including
the degenerate basic component $\alpha_{r+s}^0=0$. In general, if
the conditions of Lemma \ref{check3} are satisfied, then formula
(\ref{jora1}) is valid  for an arbitrary basic feasible solution
${\alpha^0}^T=(\alpha^0_1,\alpha^0_2,\dots,\alpha^0_{r+s})$ of
system (\ref{blinn21}) with $\alpha_{r+s}^0=0$, because
$\alpha_i^0=-\sum\limits_{j=r+1}^{r+s-1}{\bf{w}}_{i,j}{\alpha}_j^0
+{\bf{w}}_{r,0}\geq 0$,\ $i=k,k+1,\dots,r-1$. Therefore an arbitrary
inequality (\ref{jora3}) is redundant for system (\ref{bingo28}).
 \end{proof}
\begin{corollary} The consistent system (\ref{blinn21}) with $k=r$ has
a basic feasible solution \
${\alpha^0}^T=(\alpha^0_1,\alpha^0_2,\dots,\alpha^0_{r+s})$ \ that
has  $\alpha^0_{r+s}=0$ as a degenerate  basic component if and only
if inequality (\ref{land1}) is weakly redundant for the following
system of inequalities
\begin{equation}\label{bingo39}
\left\{%
\begin{array}{l}
\sum\limits_{j=r+1}^{r+s-1}{\bf{w}}_{i,j}\alpha_j\leq{\bf{w}}_{i,0}, \quad\quad \ \!i=1,2,\dots,r\!-\!1,\\[0.5mm]
\quad\quad\quad\quad \  \alpha_j\geq 0,\quad\quad\quad\
j=r\!+\!1,r\!+\!2,\dots,r\!+\!s\!-\!1.
\end{array}%
\right.
\end{equation}
\end{corollary}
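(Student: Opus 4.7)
The corollary is essentially Lemma~\ref{check3} specialized to $k=r$, supplemented by the converse. When $k=r$, system~(\ref{bingo28}) coincides with~(\ref{bingo39}) and the only inequality of the form~(\ref{jora3}) is~(\ref{land1}) itself, so the proof reduces to establishing the two implications in a focused way.

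For the $\Leftarrow$ direction I will first observe that the feasible set of~(\ref{bingo39}) is exactly the projection of the feasible set of~(\ref{blinn21}) onto the $(\alpha_{r+1},\dots,\alpha_{r+s-1})$-coordinates: any point of~(\ref{bingo39}) lifts to~(\ref{blinn21}) by setting $\alpha_i=\mathbf{w}_{i,0}-\sum_j\mathbf{w}_{i,j}\alpha_j\ge 0$ for $i\le r-1$ and, using $\mathbf{w}_{r,r+s}<0$, picking non-negative $\alpha_r,\alpha_{r+s}$ that satisfy the $r$-th equation. Consequently, weak redundancy of~(\ref{land1}) for~(\ref{bingo39}) is equivalent to the conjunction of ``(\ref{land1}) holds for every feasible point of~(\ref{blinn21})'' and ``(\ref{land1}) is attained as an equality somewhere'', which is precisely the hypothesis of Lemma~\ref{check3} at $k=r$; the lemma then yields the required basic feasible solution with $\alpha_{r+s}^0=0$ as a degenerate basic component.

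For the $\Rightarrow$ direction the structural input is that with $k=r$ the columns $W_r=e_r$ and $W_{r+s}=\mathbf{w}_{r,r+s}\,e_r$ of~(\ref{blinn21}) are collinear, so they cannot both belong to a basis; hence $\alpha_{r+s}^0$ being a basic component forces $\alpha_r^0=0$, and substitution into the $r$-th equation gives $\sum_j\mathbf{w}_{r,j}\alpha_j^0=\mathbf{w}_{r,0}$, which is attainment of equality in~(\ref{land1}) at the feasible point $(\alpha_{r+1}^0,\dots,\alpha_{r+s-1}^0)$ of~(\ref{bingo39}). For the redundancy half I will set up the linear program $\max\sum_{j=r+1}^{r+s-1}\mathbf{w}_{r,j}\alpha_j$ subject to~(\ref{blinn21}) and analyse its dual: the constraint from the $\alpha_r$-column yields $\pi_r\ge 0$, while the one from the $\alpha_{r+s}$-column combined with $\mathbf{w}_{r,r+s}<0$ yields $\pi_r\le 0$, pinning down $\pi_r=0$, so this dual collapses to the dual of the LP that maximises $\sum\mathbf{w}_{r,j}\alpha_j$ over~(\ref{bingo39}). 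Since $W_{r+s}$ is in the basis of $\alpha^0$, the basis inverse formula $\pi^{T}=c_B^{T}B^{-1}$ supplies a dual feasible vector of value $\mathbf{w}_{r,0}$, and weak LP duality gives $\sum\mathbf{w}_{r,j}\alpha_j\le\mathbf{w}_{r,0}$ throughout~(\ref{bingo39}).

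The main obstacle is the $\Rightarrow$ direction. Attainment of equality at one specific point is transparent from the basis structure, but upgrading that single piece of data into redundancy on all of~(\ref{bingo39}) is not combinatorially evident; it relies on extracting from the degenerate basis containing $W_{r+s}$ a dual certificate for the LP that maximises $\sum\mathbf{w}_{r,j}\alpha_j$ over~(\ref{bingo39}). The pivotal technical point is the forced vanishing of $\pi_r$ and the consequent collapse of the dual LP associated with~(\ref{blinn21}) to the dual LP associated with~(\ref{bingo39}), which is exactly what lets the basis of $\alpha^0$ be read as a dual optimum of the desired value.
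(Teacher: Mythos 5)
Your $\Leftarrow$ direction is correct and is essentially the route the paper leaves implicit: for $k=r$ system (\ref{bingo28}) coincides with (\ref{bingo39}), your lifting of a point of (\ref{bingo39}) to (\ref{blinn21}) (set $\alpha_i={\bf{w}}_{i,0}-\sum_j{\bf{w}}_{i,j}\alpha_j$ for $i\le r-1$ and use ${\bf{w}}_{r,r+s}<0$ to satisfy the $r$-th equation with nonnegative $\alpha_r,\alpha_{r+s}$) identifies weak redundancy of (\ref{land1}) for (\ref{bingo39}) with the hypothesis of Lemma \ref{check3}, and that lemma produces the required degenerate basic feasible solution. The attainment half of your $\Rightarrow$ direction is also sound: with $k=r$ the columns $W_r$ and $W_{r+s}$ are collinear, so $\alpha_{r+s}$ basic forces $\alpha_r^0=0$ and hence equality in (\ref{land1}) at a feasible point of (\ref{bingo39}). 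Note that the paper offers no proof of the $\Rightarrow$ direction at all, so here you are attempting more than the source does.

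The redundancy half of your $\Rightarrow$ direction contains a genuine error. The vector $\pi^{T}=c_B^{T}B^{-1}$ attached to a primal feasible basis is dual feasible only when that basis is \emph{optimal} for the LP maximizing $\sum_j{\bf{w}}_{r,j}\alpha_j$; the mere presence of $W_{r+s}$ in the basis certifies nothing of the sort, so weak duality cannot be invoked. Concretely, take $r=k=2$, $s=3$, with equations $\alpha_1+\alpha_3=1$ and $\alpha_2+\alpha_3+2\alpha_4-\alpha_5=1$, all $\alpha_j\ge0$; every sign condition of (\ref{blinn21}) holds, and the basis $\{W_3,W_5\}$ yields the basic feasible solution $\alpha_3=1$, $\alpha_5=0$ with $\alpha_5=\alpha_{r+s}$ a degenerate basic component. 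Yet (\ref{land1}) reads $\alpha_3+2\alpha_4\le1$ and is violated at the point $\alpha_3=0$, $\alpha_4=1$ of (\ref{bingo39}); correspondingly your multiplier vector is $\pi=(1,0)$ and fails the dual constraint $\pi^{T}W_4\ge {\bf{w}}_{2,4}$, i.e.\ $0\ge 2$. So the step is not merely unjustified but false, and in fact the forward implication of the corollary as literally stated fails on this instance: it becomes true only under the standing hypothesis of Lemma \ref{check3} that (\ref{land1}) holds for every feasible solution of (\ref{blinn21}), in which case the redundancy half is an assumption rather than something derivable from the basis structure. Any repair of your argument must either import that hypothesis explicitly or exhibit a separate certificate that the LP over (\ref{bingo39}) is bounded by ${\bf{w}}_{r,0}$.
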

\begin{remark}\label{dima83}Lemma \ref{check3} has been proven assuming that for an
arbitrary feasible solution of system (\ref{blinn21}) inequality
(\ref{land1}) holds, and at least for a feasible solution of this
system inequality (\ref{land1}) is satisfied as equality. In
general, such a condition in this lemma can be considered with
respect to an arbitrary inequality
\begin{equation}\label{land88}\sum\limits_{j=r+1}^{r+s-1}{\bf{w}}_{i_0,j}\alpha_j
\leq {\bf{w}}_{i_0,0}, \ \ \ i_0\in\{k,k+1,\dots,r\},
\end{equation}
and based on this similarly we can prove  the  existence of a basic
solution   ${\alpha^0}^T=(\alpha^0_1,\alpha^0_2,\dots,
\alpha^0_{r+s})$ \ for system (\ref{blinn21}) that has
$\alpha^0_{r+s}=0$ as a degenerate basic component.
\end{remark}

Taking into account  Remark \ref{dima83} to Lemma \ref{check3} we
obtain the following result.
\begin{theorem}\label{capmt1}
 The consistent system (\ref{blinn21}) with  $k\leq r$  has  a basic feasible
 solution \ ${\alpha^0}^T=(\alpha^0_1, \ \alpha^0_2, \ \dots,\
\alpha^0_{r+s})$ \ that has  $\alpha^0_{r+s}=0$ \ as the basic
component if and only if there exists $i_0\in \{k,k+1,\dots,r\}$ for
which the corresponding inequality (\ref{land88}) is weakly
redundant for system (\ref{bingo28}) and an arbitrary inequality
(\ref{jora3}) is redundant for system (\ref{bingo28}).
\end{theorem}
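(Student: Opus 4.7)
The plan is to derive Theorem \ref{capmt1} as a direct consequence of Lemma \ref{check3} combined with Remark \ref{dima83}, which lets us replace the specific row $r$ of Lemma \ref{check3} by any row index $i_0\in\{k,k+1,\ldots,r\}$.

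For the sufficiency direction, I would assume the existence of an $i_0$ for which inequality (\ref{land88}) is weakly redundant for (\ref{bingo28}) together with the global redundancy of every inequality (\ref{jora3}), and verify the hypotheses of Lemma \ref{check3} relative to row $i_0$. First, any feasible solution of (\ref{blinn21}) projects onto $(\alpha_{r+1},\ldots,\alpha_{r+s-1})$ that satisfies (\ref{bingo28}), because rows $i=1,\ldots,k-1$ of (\ref{blinn21}) have the form $\alpha_i+\sum {\bf{w}}_{i,j}\alpha_j={\bf{w}}_{i,0}$ with $\alpha_i\ge 0$ and no $\alpha_{r+s}$ term. Combined with the redundancy of (\ref{land88}) this gives (\ref{land88}) at every feasible solution of (\ref{blinn21}). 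For an equality witness, pick a point of (\ref{bingo28}) where equality in (\ref{land88}) holds; the redundancy of every (\ref{jora3}) guarantees nonnegativity of each $\alpha_i={\bf{w}}_{i,0}-\sum {\bf{w}}_{i,j}\alpha_j$ for $i=k,\ldots,r$, so setting $\alpha_{r+s}=0$ extends the witness to a feasible solution of (\ref{blinn21}) with equality in (\ref{land88}). Remark \ref{dima83} then yields the basic feasible solution with $\alpha^0_{r+s}=0$ as a (degenerate) basic component.

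For the necessity direction, let $\alpha^0$ be such a basic feasible solution with basis $B^*\ni W_{r+s}$. By the structure of (\ref{blinn21}) in the form prepared by (\ref{compl1}), $W_{r+s}$ has zero entries in rows $1,\ldots,k-1$ and strictly negative entries in rows $k,\ldots,r$, while $W_i=e_i$ for $i=1,\ldots,r$. A projection/rank count on the first $k-1$ coordinates then forces at least $k-1$ of the $r-1$ remaining columns of $B^*$ to come from $\{W_1,\ldots,W_{k-1},W_{r+1},\ldots,W_{r+s-1}\}$, leaving room for at most $r-k$ columns from $\{W_k,\ldots,W_r\}$. Consequently some $W_{i_0}$ with $i_0\in\{k,\ldots,r\}$ is missing from $B^*$, so $\alpha^0_{i_0}=0$, and the $i_0$-th equation together with $\alpha^0_{r+s}=0$ gives $\sum {\bf{w}}_{i_0,j}\alpha^0_j={\bf{w}}_{i_0,0}$, i.e.\ the projection of $\alpha^0$ is a point of (\ref{bingo28}) at which (\ref{land88}) is tight.

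To upgrade this local tightness into the two claimed global properties, I would invoke Lemma \ref{check3} (in the form of Remark \ref{dima83}) in the converse direction applied to the LP of maximising $\sum {\bf{w}}_{i_0,j}\alpha_j-{\bf{w}}_{i_0,0}$ on (\ref{bingo28}). This LP has a feasible point, namely the projection of $\alpha^0$, at which the objective vanishes; its dual, after elimination of the variables $\beta_k,\ldots,\beta_r$ (forced to zero by ${\bf{w}}_{i,r+s}<0$, as in the derivation of (\ref{bostan1})--(\ref{bostan2})), pairs against the block of $B^*$ lying in rows $1,\ldots,k-1$ to certify that the LP's optimum equals zero, giving weak redundancy of (\ref{land88}) for (\ref{bingo28}). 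The redundancy of every (\ref{jora3}) then follows by the very formula (\ref{jora1}) applied to arbitrary basic feasible solutions with $\alpha^0_{r+s}=0$, exactly as in the final step of the proof of Lemma \ref{check3}. The main obstacle is this last upgrade: turning pointwise tightness at $\alpha^0$ into a polytope-wide redundancy statement. All the requisite LP duality is already assembled inside Lemma \ref{check3}; the work consists in carefully identifying which subbasis of $B^*$ supplies the dual optimum for each $i_0$ and recycling that identification to derive (\ref{jora3}) for every $i\in\{k,\ldots,r\}$.
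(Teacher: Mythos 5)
Your sufficiency direction is correct and is exactly the route the paper intends (it offers no other proof than ``apply Lemma~\ref{check3} via Remark~\ref{dima83}''): redundancy of (\ref{land88}) for (\ref{bingo28}) transfers to every feasible point of (\ref{blinn21}) because rows $1,\dots,k-1$ force the projection into (\ref{bingo28}), and a tight point of (\ref{bingo28}) lifts, using redundancy of every (\ref{jora3}), to a feasible point of (\ref{blinn21}) with $\alpha_{r+s}=0$ where (\ref{land88}) holds with equality; this is precisely the hypothesis of Lemma~\ref{check3} for row $i_0$. The first half of your necessity argument is also sound, and can be stated more directly: since $W_{r+s}=\sum_{i=k}^{r}{\bf{w}}_{i,r+s}W_i$, any basis containing $W_{r+s}$ must omit some $W_{i_0}$ with $i_0\in\{k,\dots,r\}$, so $\alpha^0_{i_0}=0$ and row $i_0$ makes (\ref{land88}) tight at the projection of $\alpha^0$.

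The ``upgrade'' you flag as the main obstacle is, however, a genuine and unfixable gap: the necessity direction as stated is false. System (\ref{bingo28}) records only rows $1,\dots,k-1$, so it is in general strictly larger than the projection of the slice of (\ref{blinn21}) with $\alpha_{r+s}=0$; tightness or nonnegativity at points of that slice says nothing about the rest of (\ref{bingo28}), and the dual certificate you hope to extract from $B^*$ need not exist. Concretely, take $r=3$, $k=2$, $s=3$ and the system
\[
\alpha_1+\alpha_4=1,\qquad \alpha_2+\alpha_5-\alpha_6=1,\qquad \alpha_3+\alpha_4-\alpha_6=1,\qquad \alpha_j\ge 0,
\]
which has the form (\ref{blinn21}) with $\alpha_6$ in the role of $\alpha_{r+s}$. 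The basis $\{W_2,W_4,W_6\}$ yields the basic feasible solution $(0,1,0,1,0,0)$ with $\alpha_6=0$ as a degenerate basic component, so the left-hand side of the equivalence holds. Yet (\ref{bingo28}) is $\{\alpha_4\le 1,\ \alpha_4\ge 0,\ \alpha_5\ge 0\}$ and the inequality (\ref{jora3}) for $i=2$ reads $\alpha_5\le 1$, which fails at $(\alpha_4,\alpha_5)=(0,2)$; hence no choice of $i_0$ makes the right-hand side true. The same non sequitur already occurs in the last sentence of the paper's proof of Lemma~\ref{check3} (``\dots therefore an arbitrary inequality (\ref{jora3}) is redundant for system (\ref{bingo28})''), whose conclusion this example also violates, so there is no hidden argument in the paper for you to recover. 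What \emph{is} salvageable, and what Algorithm~1 actually uses, is the weaker statement that existence of such a basic solution forces some $z_{i_0}$ to attain the value $0$ on the feasible set of (\ref{blinn21}) itself (Theorem~\ref{capmt11} and Corollary~\ref{clly}); if you want a correct equivalence you should replace (\ref{bingo28}) by the projection of (\ref{blinn21}) or restrict to instances where the two coincide.
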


Based on the results above we can prove the following result.
\begin{theorem}\label{capmt11} Let the consistent system (\ref{blinn21}) with
$k\leq r$ be given and consider the linear programming problem:
Maximize the objective function (\ref{doga1}) on the set of
solutions of system (\ref{blinn21}). If this linear programming
problem has solutions and the maximal value of the objective
function  is $z_r^*=t_r^*$ where \ $0\leq t_r^*<\infty,$ \ then \
system \ (\ref{blinn21}) \ has \ a \ basic \ feasible \ solution \
${\alpha^*}^T=(\alpha^*_1,\alpha^*_2,\dots,\alpha^*_{r+s})$ \ that
has  $\alpha^*_{r+s}=-\displaystyle\frac{1}{{\bf{w}}_{r,r+s}}t_r^*$
as a basic component and $a^*_r=0$. Such a basic solution for system
(\ref{blinn21}) can be found by using an optimal basic solution
${\overline{\alpha}^*}^T=(\overline{\alpha}^*_1,\overline{\alpha}^*_2,\dots,\overline{\alpha}^*_{k-1},\overline{\alpha}^*_{r+1},
\overline{\alpha}^*_{r+2},\dots,\overline{\alpha}^*_{r+s-1})$ of the
linear programming problem: Maximize the objective function
(\ref{doga1}) on the set of solutions of system \ (\ref{check011}).
If ${\overline{\alpha}^*}^T$ is known, then a basic feasible
solution
 ${\alpha^*}^T=(\alpha^*_1,\alpha^*_2,\dots,\alpha^*_{r+s})$ \ of
system \ (\ref{blinn21}) \ that has
$\alpha^*_{r+s}=-\displaystyle\frac{1}{{\bf{w}}_{r,r+s}}t_r^*$ as a
basic component can be found as follows
\begin{equation}\label{jora2}
\!\!\!\!\alpha^*_i\!=\!\left\{%
\begin{array}{lll}
\quad\quad\quad\quad   \overline{\alpha}^*_i,
\quad\quad\quad\quad\quad\quad\quad\quad\quad\! i\!=\!1,2,\dots,k\!-\!1,r\!+\!1,\dots,r\!+\!s\!\!-\!\!1,\\[1mm]
\!-\!\sum\limits_{j=r+1}^{r+s-1}{\bf{w}}_{i,j}\overline{\alpha}^*_j\!+ \displaystyle\frac{{\bf{w}}_{i,r+s}}{{\bf{w}}_{r,r+s}}t_r^*+\!{\bf{w}}_{i,0}, \ i\!=\!k,k+1,\dots,r-1,\\[3mm]
\quad\quad\quad\quad       0,  \quad\qquad\qquad\qquad\qquad\
i\!=\!r;\\[1mm]
\quad\quad   -\displaystyle\frac{1}{{\bf{w}}_{r,r+s}}t_r^*,
\quad\quad\quad\qquad\quad\quad \ i\!=\!r+s,\\
\end{array}
\right.
\end{equation}
where $\alpha_r^*=0$ is a nonbasic component of ${\alpha^*}^T$. In
this case the inequality
\begin{equation}\label{nata3}\sum_{j=r}^{r+s-1}{\bf{w}}_{r,j}\alpha_j\leq
{\bf{w}}_{r,0}+t_r^*\end{equation} is weekly redundant for  system
(\ref{bingo28})  and an arbitrary inequality
\begin{equation}\label{druma1}\sum_{j=r+1}^{r+s-1}{\bf{w}}_{i,j}\alpha_j\leq{\bf{w}}_{i,0}+\displaystyle\frac{{\bf{w}}_{i,r+s}}{{\bf{w}}_{r,r+s}}t_r^*,\
\ i\in\{k,k+1,\dots,r\},
\end{equation}
 is redundant for system (\ref{bingo28}).

 If $k<r$ and the maximal value of the objective
function  $z_r^*=t_r^*< 0,$  then  system (\ref{blinn21}) has a
basic  feasible  solution
${\alpha^*}^T=(\alpha^*_1,\alpha^*_2,\dots,\alpha^*_{r+s})$ \  with
the basic component $\alpha^*_{r+s}$ if and only if system
(\ref{check01}) has a basic feasible solution
${\alpha^*}^T=(\alpha^*_1,\alpha^*_2,\dots,\alpha^*_{r+s})$ that
$\alpha^*_{r+s}$ is a basic component. If $k=r$ and the maximal
value of the objective function  $z_r^*=t_r^*< 0$, then system
(\ref{blinn21}) has no basic solution ${\alpha^*}^T$ that has
$\alpha^*_{r+s}$ as a basic component.
\end{theorem}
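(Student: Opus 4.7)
The plan is to reduce Theorem~\ref{capmt11} to Lemma~\ref{check3} via the linear change of variable $\alpha_{r+s}=\alpha'_{r+s}+\tau$ with $\tau:=-t_r^*/{\bf{w}}_{r,r+s}\geq 0$. Under this substitution the structural form of (\ref{blinn21}) is preserved: the first $k-1$ equations are unchanged, the last $r-k+1$ equations acquire new right-hand sides ${\bf{w}}'_{i,0}={\bf{w}}_{i,0}+\frac{{\bf{w}}_{i,r+s}}{{\bf{w}}_{r,r+s}}t_r^*$ which remain strictly positive (since ${\bf{w}}_{i,r+s}/{\bf{w}}_{r,r+s}>0$ and $t_r^*\geq 0$), and the coefficients ${\bf{w}}_{i,r+s}<0$ for $i=k,\dots,r$ are unchanged. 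I would first verify that the shifted system is consistent and that on it the shifted objective $z'_r:=z_r-t_r^*$ attains maximum zero, which is immediate from $t_r^*$ being the maximum of $z_r$ on (\ref{blinn21}) and the fact that any optimum of (\ref{blinn21}) with $\alpha_r=0$ corresponds to $\alpha'_{r+s}=0$.

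With the hypotheses of Lemma~\ref{check3} now in force for the shifted system, applying that lemma produces a basic feasible solution $(\tilde\alpha_1,\dots,\tilde\alpha_{r+s-1},\tilde\alpha'_{r+s})$ with $\tilde\alpha'_{r+s}=0$ a degenerate basic component and $\tilde\alpha_r=0$, built via formula (\ref{jora1}) from an optimal basic solution $\bar\alpha^*$ of the restricted LP (\ref{doga1}),(\ref{check011}). Since (\ref{check011}) involves only the first $k-1$ equations, which are untouched by the shift, the same $\bar\alpha^*$ serves both the shifted and original restricted LPs, and no new LP needs to be solved. Un-shifting via $\alpha^*_{r+s}=\tilde\alpha'_{r+s}+\tau=\tau$ and substituting the shifted right-hand sides into (\ref{jora1}) reproduces exactly formula (\ref{jora2}), with basic component $\alpha^*_{r+s}=-t_r^*/{\bf{w}}_{r,r+s}$ (non-degenerate if $t_r^*>0$, degenerate if $t_r^*=0$). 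The redundancy statements fall out of the same transfer: inequality (\ref{nata3}) is (\ref{land1}) written for the shifted system, each (\ref{druma1}) is the corresponding (\ref{jora3}) for the shifted system, and since (\ref{bingo28}) is unchanged under the shift, the redundancy conclusions of Lemma~\ref{check3} transfer verbatim to (\ref{nata3}) and (\ref{druma1}).

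For the cases $t_r^*<0$ the shift is not needed. If $k<r$, then every feasible solution of (\ref{blinn21}) satisfies $\sum_j{\bf{w}}_{r,j}\alpha_j<{\bf{w}}_{r,0}$, i.e.\ condition (\ref{landd1}) holds strictly, and Lemma~\ref{check1} reduces the existence question for (\ref{blinn21}) to the same question for (\ref{check01}). If $k=r$, then the columns of $\alpha_r$ and $\alpha_{r+s}$ in the coefficient matrix of (\ref{blinn21}) both vanish outside row $r$ and are thus linearly dependent, so at most one of them can appear in any basis; moreover the $r$-th equation together with ${\bf{w}}_{r,r+s}<0$ and $t_r^*<0$ forces $\alpha_r>0$ at every feasible solution, so $\alpha_r$ must always be basic and $\alpha_{r+s}$ cannot be. The main obstacle I anticipate is the verification, within Lemma~\ref{check3} applied to the shifted system, that the LP-duality calculation goes through unchanged: one must check that ${\bf{w}}_{i,r+s}<0$ for $i=k,\dots,r$ still forces the dual variables $\beta_i$ to vanish for these indices, so that the duals of the shifted full LP and the restricted LP coincide, the common optimum is $t_r^*$, and the chosen $\bar\alpha^*$ yields via (\ref{jora2}) a vector all of whose components are non-negative.
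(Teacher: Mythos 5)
Your proposal is correct, but it reaches the conclusion by a different mechanism than the paper. The paper does not reduce to Lemma \ref{check3}; it \emph{re-runs} that lemma's duality argument with the constant $t_r^*$ in place of $0$: it dualizes the full LP (\ref{doga1}), (\ref{blinn21}) to obtain (\ref{nata1}), (\ref{nata2}), uses ${\bf{w}}_{i,r+s}<0$ for $i=k,\dots,r$ to force $\beta_i=0$ there, identifies the reduced dual (\ref{bostan1}), (\ref{bostan2}) as simultaneously the dual of the restricted LP (\ref{doga1}), (\ref{check011}) and of the LP over (\ref{bingo28}), concludes all three share the optimum $t_r^*$, and then writes down (\ref{jora2}) and checks nonnegativity and the count of basic components directly. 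You instead translate $\alpha_{r+s}=\alpha'_{r+s}+\tau$ with $\tau=-t_r^*/{\bf{w}}_{r,r+s}$ and invoke Lemma \ref{check3} as a black box on the shifted system; this is legitimate, and in fact the shifted system is exactly the system (\ref{bcapmt1}) that the paper itself introduces in the proof of Theorem \ref{lbl1}, where the needed correspondence between basic feasible solutions of (\ref{blinn21}) and of the shifted system (same basis, value displaced by $\tau$ in the $\alpha_{r+s}$ coordinate) is asserted. The points you flag for verification are the right ones and all go through: the shifted right-hand sides ${\bf{w}}_{i,0}+\frac{{\bf{w}}_{i,r+s}}{{\bf{w}}_{r,r+s}}t_r^*$ stay positive, every maximizer of $z_r$ on (\ref{blinn21}) satisfies $\alpha_{r+s}\geq\tau$ (from $\alpha_r\geq 0$ in row $r$), so the shifted system is consistent and attains equality in the shifted (\ref{land1}), and (\ref{check011}) and (\ref{bingo28}) are untouched by the shift, so the same restricted optimal basis serves and the redundancy statements transfer. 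Your handling of the $t_r^*<0$ cases coincides with the paper's (Lemma \ref{check1}), with your direct column-dependence argument for $k=r$ being a harmless substitute for citing that lemma. What your route buys is that the duality computation is done once, inside Lemma \ref{check3}, rather than repeated; what it costs is the extra bookkeeping of the translation, which the paper only pays later, in Theorem \ref{lbl1}. Your closing worry about re-checking the duality calculation inside the shifted application of Lemma \ref{check3} is moot on your own approach: that calculation is internal to the lemma, and only its hypotheses need verifying, which you do.
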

\begin{proof} The proof of the first part of this theorem is similar to the proof of Lemma
\ref{check3}. We consider the linear programming problem:
 \emph{Maximize  (\ref{doga1}) subject to
(\ref{blinn21})}. This problem has solutions and  the optimal value
of the objective function (\ref{doga1}) is equal to $t_r^*$. If we
dualize this problem, then we obtain the linear programming problem
(\ref{nata1}),(\ref{nata2}) for which the optimal value of objective
function (\ref{nata1}) is equal to $t_r^*$. As we have shown, in
this problem $\beta_i=0, \ i=k, k+1,\dots, r,$  because
${\bf{w}}_{i,r+s}<0, \ i=k,\  k+1, \ \dots, \ r$.
 Therefore problem (\ref{nata1}),(\ref{nata2}) can be written as the linear programming problem
(\ref{bostan1}),(\ref{bostan2}) that can be regarded as the dual
problem for the  linear programming problem (\ref{doga1}),
(\ref{check011}) and for the linear programming problem
(\ref{doga1}), (\ref{bingo28}). So, for these problems there exist
optimal solutions and the corresponding optimal values of the
objective functions are equal to $t_r^*$. Linear programming problem
(\ref{doga1}), (\ref{bingo28}) has an optimal solution that is
attained in an extreme point
$(\overline{\alpha}^*_{r+1},\overline{\alpha}^*_{r+2},\dots,\overline{\alpha}^*_{r+s-1})$
 of the set of solutions of system  (\ref{bingo28})  and the
optimal value of the objective function \ (\ref{doga1}) \ is equal
to \ $t_r^*$, i. e. \
$z_r^*=\sum\limits_{j=r+1}^{r+s-1}{\bf{w}}_{r,j}\overline{\alpha}_j^*
-{\bf{w}}_{r,0}=t_r^*$. This means that inequality \ (\ref{nata3}) \
is weekly redundant for system \ (\ref{bingo28}) \ and
${\overline{\alpha}^*}^T=(\overline{\alpha}^*_1,
\overline{\alpha}^*_2,\dots,\overline{\alpha}_{k-1}^*,\overline{\alpha}^*_r,\dots,\overline{\alpha}_{r+s-1}^*)$
\ with \
$\overline{\alpha}^*_i=-\sum\limits_{j=r+1}^{r+s-1}{\bf{w}}_{i,j}\overline{\alpha}_j^*
+{\bf{w}}_{r,0}, \ i=1,2,\dots,k-1,$  represents an optimal basic
solution of linear programming problem (\ref{doga1}),
(\ref{check011}). At the same time the inequality (\ref{nata3}) is
weakly redundant for  system (\ref{capjor1}) where
$(\overline{\alpha}_{r+1}^*,\overline{\alpha}_{r+2}^*,\dots,\overline{\alpha}^*_{r+s-1},\overline{\alpha}^*_{r+s})$
with \
$\overline{\alpha}^*_{r+s}=-\displaystyle\frac{1}{{\bf{w}}_{r,r+s}}t_r^*$
    represents an extreme point of the set of solutions of system
(\ref{capjor1})   in which the maximal value of the objective
function   (\ref{doga1})   is attained, i.e.
$z_r^*=\!\!\sum\limits_{j=r+1}^{r+s-1}\!\!{\bf{w}}_{r,j}\overline{\alpha}_j^*
-{\bf{w}}_{r,0}=t^*$. Thus, if
${\overline{\alpha}^*}^T=(\overline{\alpha}^*_{r+1},\overline{\alpha}^*_{r+2},\dots,\overline{\alpha}^*_{r+s-1})$
is an \ optimal \ basic \ solution  of \ linear programming problem
\ (\ref{doga1}), (\ref{check011}), then \ $(\overline{\alpha}^*_1, \
\overline{\alpha}^*_2, \ \dots \ \overline{\alpha}^*_{r+s-1}, \
\overline{\alpha}^*_{r+s})$, \ with \ \
$\alpha_{r+s}^*=-\displaystyle\frac{1}{{\bf{w}}_{r,r+s}}t_r^*$ \ \
and \
$\overline{\alpha}_i^*=-\sum\limits_{j=r+1}^{r+s-1}{\bf{w}}_{i,j}\overline{\alpha}^*_j
\ + \ \displaystyle\frac{{\bf{w}}_{i,r+s}}{{\bf{w}}_{r,r+s}}t_r^* \
+ \ {\bf{w}}_{i,0}\geq 0, \ \ \  i=k,k+1,\dots,r,$ represents an
optimal basic solution of linear programming problem (\ref{doga1})
(\ref{blinn21}), i.e.
${\alpha^*}^T=(\alpha^*_1,\alpha^*_2,\dots,\alpha^*_{r+s})$ with
components  determined according to (\ref{jora2}) represents a basic
feasible solution for system (\ref{blinn21}) that has
$\alpha_{r+s}^*=-\displaystyle\frac{1}{{\bf{w}}_{r,r+s}}t_r^*$ as a
basic component. Indeed, \ ${\alpha^*}^T$ \ contains no more than \
$k-1$ \ basic components of \ ${\overline{\alpha}^*}^T,$ no more
than $r-k$ components
$\alpha_i^*=\!-\!\sum\limits_{j=r+1}^{r+s-1}{\bf{w}}_{i,j}\overline{\alpha}^*_j\!+
\displaystyle\frac{{\bf{w}}_{i,r+s}}{{\bf{w}}_{r,r+s}}t_r^*+\!{\bf{w}}_{i,0},$
\ $i=k,k+1,\dots,r-1$, and the  basic component
$\alpha_{r+s}^*=\displaystyle\frac{1}{{\bf{w}}_{r,r+s}}t_r^*$, i.e.,
in the whole we obtain no more than $r$ basic components. Therefore
the inequality (\ref{nata3}) is weakly redundant for system
(\ref{bingo28}), and an arbitrary inequality (\ref{druma1}) is
redundant for system (\ref{bingo28}).

The proof of the second part of the theorem when  $z_r^*=t_r^*<0$
follows from Lemma \ref{check1}.
\end{proof}
\begin{remark}\label{fld1} Theorem \ref{capmt11} has been proven assuming that the upper bound of function (\ref{doga1}) on the set of solutions of system (\ref{blinn21}) is nonnegative finite value.
 In general, this
theorem  can be proven using such a bounded
 condition with respect \ to \ an \ arbitrary \ function \
$z_i=\sum_{j=r}^{r+s-1}{\bf{w}}_{i,j}\alpha_j- {\bf{w}}_{i,0,}$ \
$i\in\{k,k+1,\dots,r\}$. In the case when all these \ upper \ bounds
$z_i^*=t_i^*,$  $i=k, k+1,\dots, r,$ are negative,  system \
(\ref{blinn21}) \ has no  basic solution \
${\alpha^*}^T=(\alpha^*_1,\alpha^*_2,\dots,\alpha^*_{r+s})$ \ that
has $\alpha^*_{r+s}$ as a basic component. Additionally, if all
upper bounds $t_i^*, \ i\in\{k,k+1,\dots,r\}$ of the corresponding
functions $z_i$ \ $i\in\{k,k+1,\dots,r\}$ on the set of solutions of
system (\ref{blinn21}) are nonnegative finite values, then system
(\ref{blinn21}) \ may have \ different \ basic feasible solutions \
${\alpha^*}^T(i)=(\alpha^*_1(i),\alpha^*_2(i),\dots,\alpha^*_{r+s}(i))$
 with different   basic  components \
$\alpha^*_{r+s}(i)=-\displaystyle\frac{1}{{\bf{w}}_{i,r+s}}t_i^*, \
\ \ i\in\{k,k+1,\dots,r\}$. So, if consistent system (\ref{blinn21})
with $k\leq r$ has a basic feasible solution
${\alpha^*}^T=(\alpha^*_1,\alpha^*_2,\dots,\alpha^*_{r+s})$  that
$\alpha^*_{r+s}$  is a basic component, then such a basic solution
with such a basic component may be not unique.
\end{remark}

In the case when function (\ref{doga1}) is upper unbounded on the
set of solutions of system (\ref{blinn21}), the following theorem
holds.
\begin{theorem}\label{lbl1} Assume that system (\ref{blinn21}) with    $k<r$   has solutions and consider the linear programming problem: Maximize
the objective   function (\ref{doga1}) on the set of solutions of
system (\ref{blinn21}). If the objective function(\ref{doga1})  on
the set of solutions of system (\ref{blinn21}) is upper unbounded,
then  system (\ref{blinn21}) has no  basic feasible solution
${\alpha^*}^T=(\alpha^*_1,\alpha^*_2,\dots,\alpha^*_{r+s})$ that has
$\alpha_{r+s}^*$ as a basic component.
\end{theorem}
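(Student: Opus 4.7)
My plan is to prove Theorem \ref{lbl1} by contradiction, leveraging the LP duality framework developed in the proof of Theorem \ref{capmt11}. Assume simultaneously that the LP $\max z_r$ subject to (\ref{blinn21}) is upper unbounded, and that there exists a basic feasible solution $\alpha^{*}$ of (\ref{blinn21}) with $\alpha^{*}_{r+s}$ basic.

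The first step is to invoke strong LP duality. After the constraint $\alpha_{r+s}\geq 0$ together with ${\bf{w}}_{i,r+s}<0$ (for $i\geq k$) forces the dual multipliers $\beta_k,\ldots,\beta_r$ to vanish (exactly as in Theorem \ref{capmt11}'s proof), the dual of the LP $\max z_r$ over (\ref{blinn21}) reduces to the feasibility question for the system
\begin{equation*}
\sum_{i=1}^{k-1}{\bf{w}}_{i,j}\beta_i\geq{\bf{w}}_{r,j},\quad j=r+1,\ldots,r+s-1,\qquad \beta\geq 0,
\end{equation*}
which is infeasible since the primal is upper unbounded. By the Farkas alternative, there exist nonnegative multipliers $\lambda_{r+1},\ldots,\lambda_{r+s-1}\geq 0$ with $\sum_{j}{\bf{w}}_{i,j}\lambda_j\leq 0$ for every $i<k$ and $\sum_{j}{\bf{w}}_{r,j}\lambda_j>0$.

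The second step is to extract from the basis $B$ of $\alpha^{*}$ (which contains the column of $\alpha_{r+s}$) a nonnegative vector $\beta^{*}\in R^{k-1}$ satisfying the reduced dual constraints above. I would define $\beta^{*}$ from the basis-induced dual multipliers $y=(A_B^{-T})c_B$ and then verify three items: $(i)$ $y_i=0$ for $i\geq k$, using $\sum_{i\geq k}{\bf{w}}_{i,r+s}y_i=c_{r+s}=0$ together with $y\geq 0$ and ${\bf{w}}_{i,r+s}<0$; $(ii)$ $y\geq 0$ overall; and $(iii)$ the reduced-cost inequality $\sum_{i=1}^{k-1}{\bf{w}}_{i,j}y_i\geq{\bf{w}}_{r,j}$ for every nonbasic middle index $j$. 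Feasibility of $\beta^{*}$ in the reduced dual contradicts the infeasibility established in the first step, yielding the desired contradiction.

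The main obstacle will be item $(iii)$: at a generic BFS that is not LP-optimal, reduced costs can have any sign. To secure this property I would apply Lemma \ref{check1} (more precisely its extension in Remark \ref{dima83}) inductively on $r-k$, reducing (\ref{blinn21}) to a subsystem with one fewer equation by eliminating an equation whose corresponding $z_i$-inequality is strictly slack for every feasible solution, until the base case $k=r$ is reached, where the statement is covered by the second part of Theorem \ref{capmt11}. The hypothesis $k<r$ ensures that this inductive reduction is always available and terminates at the base case.
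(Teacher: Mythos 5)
Your overall strategy --- contradiction plus the LP duality machinery of Theorem \ref{capmt11} --- starts from the right place (the paper's proof is also by contradiction and also ultimately rests on boundedness statements for the functions $z_i$), but the argument as proposed has a genuine gap exactly at the point you flag as the ``main obstacle,'' and the proposed repair does not close it. The difficulty is that a basic feasible solution of (\ref{blinn21}) having $\alpha_{r+s}$ as a basic component is in no way an \emph{optimal} basis for the LP of maximizing (\ref{doga1}); hence the basis-induced multipliers $y=A_B^{-T}c_B$ need not satisfy the reduced-cost inequalities in item $(iii)$, and in the simplest case (basis $=\{1,\dots,r\}\setminus\{i_1\}\cup\{r+s\}$ with all middle columns nonbasic) one computes $y=0$, which is feasible for the reduced dual only if ${\bf{w}}_{r,j}\le 0$ for all middle $j$ --- i.e.\ only in the case where (\ref{doga1}) is bounded anyway. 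So the dual-feasible point you need simply is not produced by the assumed basis.

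The inductive repair via Lemma \ref{check1} and Remark \ref{dima83} does not rescue this. First, the inductive step requires an index $i_0\in\{k,\dots,r\}$ whose inequality (\ref{land88}) is \emph{strictly} slack for every feasible solution; you give no argument that such an index exists, and it need not. In particular row $r$ itself can never be eliminated this way, since unboundedness of (\ref{doga1}) forces $z_r>0$ at some feasible point, so condition (\ref{landd1}) fails; the unbounded objective therefore survives every reduction and must be dealt with directly. Second, the claimed base case is not covered: the second part of Theorem \ref{capmt11} treats $k=r$ with a \emph{finite negative} optimum $t_r^*<0$, not the unbounded case, so the induction has nothing to land on. For comparison, the paper avoids duality here entirely: from the hypothetical basic solution ${\alpha^*}$ it builds the shifted system (\ref{bcapmt1}), observes that $\alpha^0=(\alpha^*_1,\dots,\alpha^*_{r+s-1},0)$ is a basic feasible solution of it with $\alpha^0_{r+s}=0$ degenerate basic, and then invokes Theorem \ref{capmt1} to conclude that every $z_i$, $i\in\{k,\dots,r\}$, is bounded above on the solution set of (\ref{bingo28}) and hence on that of (\ref{blinn21}) --- contradicting the assumed unboundedness. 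If you want to keep a duality flavor, you would need to run the argument at an \emph{optimal} basis of the auxiliary LP (as in Theorem \ref{capmt11}), not at the arbitrary basis supplied by the contradiction hypothesis.
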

\begin{proof} We prove the theorem by contradiction, assuming
that the function (\ref{doga1}) on the set of solutions of system
(\ref{blinn21}) is upper unbounded and system (\ref{blinn21}) has a
basic feasible solution
${\alpha^*}^T=(\alpha^*_1,\alpha^*_2,\dots,\alpha^*_{r+s})$ that has
$\alpha_{r+s}^*$ as a basic component, where
$\alpha^*_{r+s}<\infty$. If it is so, then it easy to check that the
following system
 \begin{equation}\label{bcapmt1}
\!\!\left\{%
\begin{array}{l}
\alpha_1 \    \quad\quad\quad
+\sum\limits_{j=r+1}^{r+s-1}{\bf{w}}_{1,j}\alpha_j \quad\quad\quad\quad\quad \ \ \ \ \ \   =\ {\bf{w}}_{1,0},\\[1mm]
\ \ . \ \ . \ \ \ . \ \ .\ \ . \ \ . \ \ . \ \ . \ \ . \ \ . \ \ . \
\ . \ \ . \ \ . \ \ . \ \ . \ \ . \ \ . \ \ . \ \ . \ \ . \ \ . \ \ . \ \    \\[1mm]
\ \   \alpha_{k-1}\ \ \ \    +\sum\limits_{j=r+1}^{r+s-1}\!{\bf{w}}_{k-\!1,j}\alpha_j \ \quad\quad\quad\quad\quad \ \ \ \   ={\bf{w}}_{k-\!1,0},\\[2mm]
\quad\ \   \alpha_{k} \ \ \ \ +\sum\limits_{j=r+1}^{r+s-1}{\bf{w}}_{k,j}\alpha_j\ \ \ + \  {\bf{w}}_{k,r+s}\alpha_{r+s}  ={\bf{w}}_{k,0}\ \ - \ {\bf{w}}_{k,r+s}\alpha^*_{r+s},\\[1mm]
\quad \quad    \alpha_{k+1}\!+\!\sum\limits_{j=r+1}^{r+s-1}\!{\bf{w}}_{i+1,j}\alpha_j  +   {\bf{w}}_{k+1,r+s}\alpha_{r+s}\! ={\bf{w}}_{k+1,0}-{\bf{w}}_{k\!+\!1,r+s}\alpha^*_{r+s},\\[1mm]
\ \ . \ \ . \ \ . \ \ . \ \ . \ \ . \ \  .\ \ . \ \ . \ \ . \ \ . \
\ . \ \ . \ \ . \ \ . \ \ \ . \ \ .\ \ . \ \ . \ \ . \ \ . \ \ . \ \
. \ \ . \ \ . \ \ . \ \ . \ . \ \ . \\[0.5mm]
\quad \quad\quad\   \alpha_r\!+\sum\limits_{j=r+1}^{r+s-1}{\bf{w}}_{r,j}\alpha_j \ \ \ + \ {\bf{w}}_{r,r+s}\alpha_{r+s}  = {\bf{w}}_{r,0}\ \  - \ {\bf{w}}_{r,r+s}\alpha^*_{r+s},\\[1mm]
\qquad\quad\quad\quad\quad\quad\quad \alpha_j\geq 0, \quad j=1,\ 2,\
\dots,\ r+s,
\end{array}%
\right.
\end{equation}
has a basic feasible  solution
${\alpha^0}^T\!\!=\!(\alpha^0_1,\alpha^0_2,\dots,\alpha^0_{r+s-1},\alpha^0_{r+s})$
that has $\alpha^0_{r+s}=0$ as a degenerate basic component  and \
$\alpha^0_i=\alpha^*_i, \ \ i=1,2,\dots,r+s-1.$ Reversely also
holds, i.e. if for a given  $\alpha^*_{r+s}$  system (\ref{bcapmt1})
has a basic feasible solution
${\alpha^0}^T=(\alpha^0_1,\alpha^0_2,\dots,\alpha^0_{r+s-1},\alpha^0_{r+s})$
that has $\alpha^0_{r+s}=0$ as a degenerate basic component, \ then
system (\ref{blinn21}) has a basic feasible solution
${\alpha^*}^T=(\alpha^*_1,\alpha^*_2,\dots,\alpha^*_{r+s})$ that has
$ \alpha^*_{r+s}$ as a basic component and  $\alpha_i^*=\alpha_i^0,
\ i=1,2,\dots,r+s-1$.

So, ${\alpha^0}^T=(\alpha^0_1,\alpha^0_2,\dots,\alpha^0_{r+s})$ with
$\alpha^0_i=\alpha^*_i, \ i=1,2,\dots,r+s-1,$ and $\alpha^0_{r+s}=0$
represents  a basic feasible solution with degenerate basic basic
component $\alpha^0_{r+s}=0$ for system (\ref{bcapmt1}). Then
according to Theorem \ref{capmt1} there exists $i_0\in
\{k,k+1,\dots,r\}$ for which the inequality
\begin{equation}\label{aldr1}\sum\limits_{j=r+1}^{r+s-1}{\bf{w}}_{i_0,j}\alpha_j \leq
{\bf{w}}_{i_0,0}-{\bf{w}}_{i_0,r+s}\alpha^*_{r+s}
\end{equation}
 is weakly redundant for
system (\ref{bingo28}) and an arbitrary inequality
\begin{equation}\label{aldr2}\sum\limits_{j=r+1}^{r+s-1}{\bf{w}}_{i,j}\alpha_j
\leq {\bf{w}}_{i,0}-{\bf{w}}_{i,r+s}\alpha^*_{r+s}, \ i\in
\{k,k+1\dots,r\},\end{equation}
 is redundant for system
(\ref{bingo28}). This means that an arbitrary function $$z_{i}
=\sum\limits_{j=r+1}^{r+s-1}{\bf{w}}_{i,j}\alpha_j- {\bf{w}}_{i,0},
\ i\in \{k,k+1\dots,r\},$$ is upper bounded on the set of solutions
of \ system \ (\ref{bingo28}) \ and,  based on Theorem
\ref{capmt11},
 these functions are upper bounded on the set of solution
of system \ (\ref{blinn21}), i. e.,  we obtain a contradiction.
\end{proof}
\begin{remark} Theorem \ref{lbl1} \  has been proven for the case when \ the
\ function $z_r =\sum\limits_{j=r+1}^{r+s-1}{\bf{w}}_{rj}\alpha_j-
{\bf{w}}_{r,0}$ is unbounded on the set of solutions of system
(\ref{blinn21}). Obviously, this theorem is valid  also for the case
when an arbitrary function $z_{i_0}
=\sum\limits_{j=r+1}^{r+s-1}{\bf{w}}_{i_0,j}\alpha_j-
{\bf{w}}_{i_0,0}, \ i_0\in \{k,k+1\dots,r\}$ is unbounded on the set
of solutions of system (\ref{blinn21}).
\end{remark}

The presented above results prove the following theorem.
\begin{theorem}\label{corl81} The consistent system \
(\ref{blinn21})\ with \ $k\leq r$ \ has a basic feasible solution \
${\alpha^*}^T\!\!=\!(\alpha^*_1,\alpha^*_2,\dots,\alpha^*_{r+s})$ \
that has  $\alpha^*_{r+s}$ as a basic component if and only if there
exist \ $i_0\in \{k,k+1,\dots,r\}$ \ and \  $t_{i_0}^* \ (0\leq
t_{i_0}^*<\infty),$ \ such that $t^*_{i_0}$ is the upper bound of
the function
\begin{equation}\label{nics1}z_{i_0}
=\sum\limits_{j=r+1}^{r+s-1}{\bf{w}}_{i_0,j}\alpha_j-
{\bf{w}}_{i_0,0}\end{equation} on the set of solutions of system
(\ref{blinn21}). In this case system \ (\ref{blinn21})\ with \
$k\leq r$  has a basic feasible solution \
${\alpha^*}^T\!\!=\!(\alpha^*_1,\alpha^*_2,\dots,\alpha^*_{r+s})$, \
where \
$\alpha^*_{r+s}=-\displaystyle\frac{1}{{\bf{w}}_{i_0,r+s}}t_{i_0}^*$
\  is a basic component. Such a basic feasible solution for system
system (\ref{blinn21}) can be found by using an optimal basic
solution
${\overline{\alpha}^*}^T=(\overline{\alpha}^*_1,\overline{\alpha}^*_2,\dots,\overline{\alpha}^*_{i_0-1},\overline{\alpha}^*_{i_0+1},\dots,
\overline{\alpha}_{r+s-1}^*)$ \ of the following linear programming
problem: Maximize the objective function (\ref{nics1}) on the set of
solution of system (\ref{check011}). If such an optimal basic
solution for this problem is known then the components of the basic
solution ${\alpha^*}^T$ for system (\ref{blinn21}) that has
$\alpha^*_{r+s}=-\displaystyle\frac{1}{{\bf{w}}_{i_0,r+s}}t_{i_0}^*$
as a basic component can be found as follows
\begin{equation}\label{jora02}
\!\!\!\!\alpha^*_i\!=\!\left\{%
\begin{array}{lll}
\quad\quad\quad\quad   \overline{\alpha}^*_i,
\quad\quad\quad\quad\quad\quad\quad\quad\quad\! i\!=\!1,2,\dots,k\!-\!1,r\!+\!1,\dots,r\!+\!s\!\!-\!\!1,\\[1mm]
\!-\!\sum\limits_{j=r+1}^{r+s-1}{\bf{w}}_{i,j}\overline{\alpha}^*_j\!+ \displaystyle\frac{{\bf{w}}_{i,r+s}}{{\bf{w}}_{r,r+s}}t_{i_0}^*+\!{\bf{w}}_{i,0}, \ i\!=\!k,k+1,\dots,r-1,\\[3mm]
\quad\quad\quad\quad       0,  \quad\qquad\qquad\qquad\qquad\
i\!=\!r;\\[1mm]
\quad\quad   -\displaystyle\frac{1}{{\bf{w}}_{r,r+s}}t_{i_0}^*,
\quad\quad\quad\qquad\quad\quad \ i\!=\!r+s,\\
\end{array}
\right.
\end{equation}
where $\alpha_{i_0}^*=0$ is a nonbasic component of ${\alpha^*}^T$.
 Additionally, the inequality
$$\sum_{j=r}^{r+s-1}{\bf{w}}_{i_0,j}\alpha_j\leq
{\bf{w}}_{i_0,0}+t_{i_0}^*,$$ is weekly redundant for system
(\ref{bingo28}) and an arbitrary inequality
$$\sum_{j=r+1}^{r+s-1}{\bf{w}}_{i,j}\alpha_j\leq{\bf{w}}_{i,0}+\displaystyle\frac{{\bf{w}}_{i,r+s}}{{\bf{w}}_{i_0,r+s}}t_{i_0}^*,\
\ i\in\{k,k+1,\dots,r\},$$
 is redundant for system (\ref{bingo28}).
 \end{theorem}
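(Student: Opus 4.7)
The plan is to assemble Theorem \ref{corl81} from three results already proved: Theorem \ref{capmt11} (together with Remark \ref{fld1}), Theorem \ref{lbl1} (together with its remark), and Corollary \ref{clly} of Lemma \ref{check1}. The sufficiency direction reduces to a direct application of Theorem \ref{capmt11} with the index $r$ replaced by $i_0$; the necessity direction is obtained by combining a boundedness argument (from Theorem \ref{lbl1}) with a sign argument (from Corollary \ref{clly}).

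For the $(\Leftarrow)$ direction, I would assume that an index $i_0\in\{k,k+1,\dots,r\}$ exists for which the linear programming problem of maximizing $z_{i_0}=\sum_{j=r+1}^{r+s-1}{\bf{w}}_{i_0,j}\alpha_j-{\bf{w}}_{i_0,0}$ on the solution set of system (\ref{blinn21}) attains a finite value $t_{i_0}^*\geq 0$. Since ${\bf{w}}_{i_0,r+s}<0$ by the assumed form (\ref{compl1}) of the system, Theorem \ref{capmt11} applies verbatim with the role of the last row $r$ replaced by row $i_0$ (this replacement is precisely what Remark \ref{fld1} legitimates). The construction then produces a basic feasible solution with $\alpha^*_{r+s}=-t_{i_0}^*/{\bf{w}}_{i_0,r+s}\geq 0$ as a basic component, with $\alpha_{i_0}^*=0$ leaving the basis; the explicit formula (\ref{jora02}) is just the formula (\ref{jora2}) rewritten with $i_0$ in place of $r$. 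The redundancy statements for the auxiliary inequalities transfer in the same way.

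For the $(\Rightarrow)$ direction, I would suppose that (\ref{blinn21}) has a basic feasible solution $\alpha^*$ with $\alpha^*_{r+s}$ basic and argue that some $i_0\in\{k,\dots,r\}$ with $0\leq t_{i_0}^*<\infty$ must exist. First, by Theorem \ref{lbl1} (extended as noted in its remark to every index in $\{k,\dots,r\}$), no function $z_i$, $i\in\{k,\dots,r\}$, can be upper-unbounded on the solution set of (\ref{blinn21}), for otherwise we would have a contradiction with the existence of such a basic feasible solution. Hence each upper bound $t_i^*$ is finite. Next, to rule out that every $t_i^*$ is strictly negative, I would invoke Corollary \ref{clly}: if $\sum_{j=r+1}^{r+s-1}{\bf{w}}_{i,j}\alpha_j<{\bf{w}}_{i,0}$ held for every feasible $\alpha$ and every $i\in\{k,\dots,r\}$, then (\ref{blinn21}) could possess no basic feasible solution having $\alpha_{r+s}$ as a basic component, contradicting the hypothesis. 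Consequently there exists at least one index $i_0$ with $t_{i_0}^*\geq 0$, which is the required $i_0$.

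The main obstacle, as I see it, is not technical but organizational: the careful identification of the pivot row with the index $i_0$ realizing the supremum, and the verification that the component formula (\ref{jora02}) respects the nonnegativity and basicness conditions under the relabeling (in particular that the component $\alpha^*_{i_0}=0$ is the one that exits the basis, while the entries $\alpha^*_i=-\sum_{j=r+1}^{r+s-1}{\bf{w}}_{i,j}\overline{\alpha}^*_j+\frac{{\bf{w}}_{i,r+s}}{{\bf{w}}_{i_0,r+s}}t_{i_0}^*+{\bf{w}}_{i,0}$ for the remaining rows $i\in\{k,\dots,r\}\setminus\{i_0\}$ remain nonnegative). Once the $i_0$-indexed analogues of Theorem \ref{capmt11} and of its dual linear programming reformulation are written down, the redundancy claims for the inequalities $\sum_{j=r+1}^{r+s-1}{\bf{w}}_{i_0,j}\alpha_j\leq{\bf{w}}_{i_0,0}+t_{i_0}^*$ and $\sum_{j=r+1}^{r+s-1}{\bf{w}}_{i,j}\alpha_j\leq{\bf{w}}_{i,0}+\frac{{\bf{w}}_{i,r+s}}{{\bf{w}}_{i_0,r+s}}t_{i_0}^*$ for system (\ref{bingo28}) follow immediately by the same weak-redundancy argument used in the proof of Theorem \ref{capmt11}.
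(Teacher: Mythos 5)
Your proposal is correct and matches the paper's intent exactly: the paper gives no separate proof of Theorem \ref{corl81} beyond the assertion that ``the presented above results prove the following theorem'' (adding that the second part is Theorem \ref{capmt11} in the case $i_0=r$), and your assembly from Theorem \ref{capmt11} with Remark \ref{fld1} (sufficiency), Theorem \ref{lbl1} with its remark (finiteness of each $t_i^*$), and Corollary \ref{clly} (ruling out all $t_i^*<0$) is precisely that argument made explicit. Your consistent use of ${\bf{w}}_{i_0,r+s}$ in the component formula also silently corrects the residual $r$-indexed denominators that the paper's formula (\ref{jora02}) carries over from (\ref{jora2}).
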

In fact the  second part of this theorem represents Theorem
\ref{capmt11} for  the case $i_0=r$.

\medskip
To check if a function
$z_i=\sum\limits_{j=r+1}^{r+s-1}{\bf{w}}_{i,j}\alpha_j-{\bf{w}}_{i,0},$
\ $i\in \{k,k+1,\dots,r\}$, on the set of solutions of system
(\ref{blinn21}) is upper bounded or it is upper unbounded on the set
of solutions of this system  the following lemma can be used.
\begin{lemma}\label{finis1} Let the consistent system (\ref{blinn21}) with
$k\leq r$ be given. Then a function
\begin{equation}\label{bneq1}
z_{i_0}=\sum\limits_{j=r+1}^{r+s-1}{\bf{w}}_{i_0,j}\alpha_j-{\bf{w}}_{i_0,0},
\ i_0\in \{k,k+1,\dots,r\}
\end{equation}
is upper bounded on the set of solutions of system (\ref{blinn21})
if an only if the following system
\begin{equation}\label{stefan1}
\left\{%
\begin{array}{ll}\displaystyle\sum_{j=r+1}^{r+s-1}{\bf{w}}_{i,j}\alpha_j\qquad\qquad\qquad\leq0,
\ \ i=1,2,\dots,k-1,\\[1mm]
\displaystyle\sum_{j=r+1}^{r+s-1}{\bf{w}}_{i,j}\alpha_j+{\bf{w}}_{i,r+s}\alpha_{r+s}\leq0,\
\ i=k,k+1,\dots,r,\\[1mm]
\displaystyle\sum\limits_{j=r+1}^{r+s-1}{\bf{w}}_{i_0,j}\alpha_j = 1,\\[1mm]
\quad\quad\quad\quad\ \  \alpha_j\geq 0, \quad\quad\quad
j=r+1,r+2,\dots,r+s.
\end{array}%
\right.
\end{equation}
has no solutions. If system (\ref{stefan1}) has solutions then the
function (\ref{bneq1}) is upper unbounded on the set of solutions of
system (\ref{blinn21}).
\end{lemma}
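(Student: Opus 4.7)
The plan is to reduce the upper boundedness question for $z_{i_0}$ on the polyhedron $P$ defined by (\ref{blinn21}) to a statement about the recession cone $\mathrm{rec}(P)$, and then to recognize (\ref{stefan1}) as the compact way of saying that the linear form $\tilde z_{i_0}(\alpha):=\sum_{j=r+1}^{r+s-1}\mathbf{w}_{i_0,j}\alpha_j$ attains the value $1$ somewhere on $\mathrm{rec}(P)$.

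First I would observe that the constant $-\mathbf{w}_{i_0,0}$ does not affect boundedness, so it suffices to analyse $\tilde z_{i_0}$. Since (\ref{blinn21}) is consistent by hypothesis, $P\neq\emptyset$, and a standard fact from linear programming asserts that $\tilde z_{i_0}$ is bounded above on $P$ if and only if $\tilde z_{i_0}(\alpha)\leq 0$ for every $\alpha\in\mathrm{rec}(P)$. To compute $\mathrm{rec}(P)$ explicitly I would take the homogenized equalities of (\ref{blinn21}) and, for each $i=1,\dots,r$, solve the $i$-th equation for $\alpha_i$ (which appears with coefficient $1$ in one and only one equation) and then impose $\alpha_i\geq 0$. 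This elimination yields exactly the inequalities $\sum_{j=r+1}^{r+s-1}\mathbf{w}_{i,j}\alpha_j\leq 0$ for $i=1,\dots,k-1$ and $\sum_{j=r+1}^{r+s-1}\mathbf{w}_{i,j}\alpha_j+\mathbf{w}_{i,r+s}\alpha_{r+s}\leq 0$ for $i=k,\dots,r$, together with $\alpha_j\geq 0$ for $j=r+1,\dots,r+s$, which are precisely the first two blocks of (\ref{stefan1}).

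The conclusion then follows by positive homogeneity of both $\mathrm{rec}(P)$ and $\tilde z_{i_0}$. If (\ref{stefan1}) admits a solution $(\alpha_{r+1},\dots,\alpha_{r+s})$, reconstructing $\alpha_1,\dots,\alpha_r$ by the elimination formulas shows it extends to an element of $\mathrm{rec}(P)$ on which $\tilde z_{i_0}=1>0$; adding a positive multiple of this direction to any feasible point of (\ref{blinn21}) drives $z_{i_0}$ to $+\infty$, so $z_{i_0}$ is upper unbounded. Conversely, if (\ref{stefan1}) is inconsistent then no direction of $\mathrm{rec}(P)$ can yield $\tilde z_{i_0}>0$ (otherwise rescaling to $\tilde z_{i_0}=1$ would produce a solution of (\ref{stefan1})), hence $\tilde z_{i_0}\leq 0$ on $\mathrm{rec}(P)$ and $z_{i_0}$ is upper bounded on $P$.

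The main subtlety I anticipate is the elimination step: one must verify that eliminating $\alpha_1,\dots,\alpha_r$ from the homogenized equalities by using their nonnegativity produces \emph{exactly} the first two blocks of (\ref{stefan1}) with no spurious extra constraints, and that the reverse reconstruction of $\alpha_1,\dots,\alpha_r$ from a solution of (\ref{stefan1}) automatically gives $\alpha_i\geq 0$. Because each of these basic variables has coefficient $1$ in exactly one equation of (\ref{blinn21}), the elimination is unambiguous and the equivalence is immediate, so the argument goes through.
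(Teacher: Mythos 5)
Your proof is correct and follows essentially the same route as the paper: the paper also eliminates the basic variables to pass to the reduced inequality system, and then invokes Chernikov's results (Lemma 1.10 and Theorem 1.13) which are exactly the recession-cone characterization of upper boundedness of a linear functional over a nonempty polyhedron that you use. The only difference is presentational — you phrase the homogeneous system (\ref{stefan3}) as the recession cone and handle both directions by positive homogeneity, whereas the paper cites the two directions as separate classical statements.
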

\begin{proof} $\Rightarrow$ Assume that the function (\ref{bneq1}) for a given $i_0\in \{k,k+1,\dots,r\}$ is upper bounded on the set of solutions of system
(\ref{blinn21}). Then there exists the optimal solution of the
linear programming problem: Maximize the objective function
(\ref{bneq1}) subject to (\ref{blinn21}). This means that  the
linear function
$$z'_{i_0}=\sum_{j=r+1}^{r+s-1}{\bf{w}}_{i_0,j}\alpha_j$$
is upper bounded on the set of solutions of system (\ref{capjor1})
and according to Lemma 1.10 from \cite{che} (page 96) for an
arbitrary solution of the homogeneous system
\begin{equation}\label{stefan3}
\left\{%
\begin{array}{ll}\displaystyle\sum_{j=r+1}^{r+s-1}{\bf{w}}_{i,j}\alpha_j\qquad\qquad\qquad\leq0,
\ \ i=1,2,\dots,k-1,\\[1mm]
\displaystyle\sum_{j=r+1}^{r+s-1}{\bf{w}}_{i,j}\alpha_j+{\bf{w}}_{i,r+s}\alpha_{r+s}\leq0,\
\ i=k,k+1,\dots,r,\\[1mm]
\quad\quad\quad\quad\ \  \alpha_j\geq 0, \quad\quad\quad
j=r+1,r+2,\dots,r+s.
\end{array}%
\right.\end{equation} we have
$\displaystyle\sum\limits_{j=r+1}^{r+s-1}{\bf{w}}_{i_0,j}\alpha_j\leq
0,$ i.e. system (\ref{stefan1}) has no solutions.

$\Leftarrow$ Assume that the function (\ref{bneq1}) for a given
$i_0\in \{k,k+1,\dots,r\}$ is upper unbounded on the set of
solutions of system (\ref{blinn21}), and let us show that system
(\ref{stefan1}) has solutions. Indeed, if function (\ref{bneq1}) is
upper unbounded on the set of solutions of system (\ref{blinn21}),
then  for an arbitrary positive value $N$, system (\ref{capjor1})
has solutions such that
$\sum\limits_{j=r+1}^{r+s-1}{\bf{w}}_{i_0,j}\alpha_j-{\bf{w}}_{i_0,0}\geq
N$ and this involves that the set of solutions of the system
\begin{equation}\label{stefan4}
\left\{%
\begin{array}{ll}\quad\displaystyle\sum_{j=r+1}^{r+s-1}{\bf{w}}_{i,j}\alpha_j\ \quad\quad\quad\qquad\qquad\qquad\quad\leq{\bf{w}}_{i,0},
\ \ i=1,2,\dots,k-1,\\[1mm]
\quad\displaystyle\sum_{j=r+1}^{r+s-1}{\bf{w}}_{i,j}\alpha_j+{\bf{w}}_{i,r+s}\alpha_{r+s}\quad\quad\quad\quad\
\leq{\bf{w}}_{i,0},\ \ i=k,k+1,\dots,r,\\[1mm]
-\displaystyle\sum\limits_{j=r+1}^{r+s-1}{\bf{w}}_{i_0,j}\alpha_j\quad\quad\quad\quad\quad\quad+\alpha_{r+s+1}\leq -{\bf{w}}_{i_0,0},\\[1mm]
\quad\quad\quad\quad\ \  \alpha_j\geq 0, \quad\quad\quad
j=r+1,r+2,\dots,r+s+1 ,
\end{array}%
\right.
\end{equation}
is unbounded with respect to the positive direction of coordinate
$\alpha_{r+s+1}$.  Therefore, according to Theorem 1.13 and Remark 2
from \cite{che} (pages 93,96), the set of solutions of
 system (\ref{stefan4}) is unbounded with respect to the positive direction of coordinate
 $\alpha_{r+s+1}$ if and only if the homogeneous system
\begin{equation}\label{stefan5}
\left\{%
\begin{array}{ll}\quad\displaystyle\sum_{j=r+1}^{r+s-1}{\bf{w}}_{i,j}\alpha_j\ \quad\quad\quad\qquad\qquad\qquad\quad\leq0,
\ \ i=1,2,\dots,k-1,\\[1mm]
\quad\displaystyle\sum_{j=r+1}^{r+s-1}{\bf{w}}_{i,j}\alpha_j+{\bf{w}}_{i,r+s}\alpha_{r+s}\quad\quad\quad\quad\
\leq0,\ \ i=k,k+1,\dots,r,\\[1mm]
-\displaystyle\sum\limits_{j=r+1}^{r+s-1}{\bf{w}}_{i_0,j}\alpha_j\quad\quad\quad\quad\quad\quad+\alpha_{r+s+1}\leq0,\\[1mm]
\quad\quad\quad\quad\ \  \alpha_j\geq 0, \quad\quad\quad
j=r+1,r+2,\dots,r+s+1 .
\end{array}%
\right.
\end{equation}
is unbounded with respect to the positive direction of coordinate
$\alpha_{r+s+1}$ of the set of  solutions of system (\ref{stefan5}).
So, system (\ref{stefan1}) has solutions.
\end{proof}

Based on results obtained in this section, in the next section we
propose a polynomial algorithm for checking if $Y\not\subset Y_h$.

\section{\!Polynomial algorithm to check if the \ condition \ $Y\not\subset
Y_h$\\ holds for a given $h\in [-2^L,2^L]$}\label{ldmd3} Let us
assume that the coefficients of  disjoint bilinear programming
problem (\ref{eq01})-(\ref{eq03}) with a perfect disjoint subset $Y$
determined by the set of solutions of system (\ref{eq03}) are
integer. In this section, based on the results of the previous
section, we propose a polynomial algorithm for checking if system
(\ref{sure1}) has a basic solution $x_1^*,
x_2^*,\dots,x_{n+q}^*,v_1^*, v_2^*,\dots,v_{p+1}^*$ that has
$v_{p+1}^*$ as basic component for a fixed $h\in [-2^L,2^L]$. Note
that in system (\ref{sure1}), parameter  $h$ may be not integer,
however it is a rational value represented by an irreducible
rational fraction $h=\displaystyle\frac{M}{N}$, where $|M|$ and
$|N|$ do not exceed $2^L$. Therefore,  by multiplying the
coefficients of equation $r=q+m+1$ of system  (\ref{sure1}) by a
suitable integer value of the form $2^L$, all coefficients of this
equation become integer.

  According to Theorem \ref{thm101} and Corollary
\ref{blin0},  if system (\ref{eq1710}) has no solutions, then for a
given $h\in [-2^L,2^L]$, the property $Y \not \subset Y_h$ holds if
and only if system (\ref{sure1}) has a basic feasible solution that
has $v_{p+1}$  as a basic component. In the case when system
(\ref{eq1710}) has solutions, the problem of checking the condition
$Y \not \subset Y_h$ can be easily solved on the basis of Corollary
\ref{dimop} and Theorem \ref{tm01}. Moreover, in this case, the
optimal value $h^*$ of the disjoint bilinear programming problem
(\ref{eq01})-(\ref{eq03}) and an optimal point $x^*\in X$ for this
problem can be found by solving the linear programming problem
(\ref{crd})-(\ref{eq1710}). Therefore here we propose a polynomial
algorithm for checking the existence of a basic solution for system
(\ref{sure1}) that has $v_{p+1}$ as a basic component  when system
(\ref{eq1710}) has no solutions. In this case system (\ref{sure1})
is consistent for an arbitrary $h\in [-2^L,2^L]$, and the proposed
algorithm can be applied for checking if $Y \not \subset Y_h$. In
this algorithm we assume that system (\ref{sure1}) is represented in
 matrix form by system (\ref{reneq1}) consisting of $r$ linear
equations with nonnegative conditions for the vector of variables
${\alpha}^T=(\alpha_1,\alpha_2,\dots, \alpha_{r+s})$, where
$r=q+m+1, \ s=n+p-m$ and the variable $\alpha_{r+s}$ corresponds to
the variable $v_{p+1}$ in system \ (\ref{sure1}). Thus, the proposed
algorithm determines if either the property $Y \not \subset Y_h$
holds or fail to holds. In the case when for a given $h\in
[-2^L,2^L]$ the condition $Y \not \subset Y_h$ holds the algorithm
determines  a basic feasible solution
${\alpha^*}^T=(\alpha^*_1,\alpha^*_2,\dots,\alpha^*_{r+s})$ that has
$\alpha_{r+s}^*$ as a basic component and consequently algorithm
determines a basic solution $x_1^*,\ x_2^*,\ \dots,\
x_{n+q}^*,v_1^*,\ v_2^*,\ \dots,v_{p+1}^*$ for system (\ref{sure1})
that has $v_{p+1}^*$ as a basic component.

\bigskip \noindent\textbf{Algorithm 1:}

\bigskip
\noindent  \emph{Step 1 (Preliminary step):}\\
Fix \  $h\in [-2^L,\  2^L]$ \  and consider system \ (\ref{sure1}) \
with the given \ $h$. In  matrix form this system  is represented by
system \ (\ref{reneq1}) \ with respect to the vector of variables
${\alpha}^T=(\alpha_1,\alpha_2,\dots, \alpha_{r+s})$, where the
component \ $\alpha_{r+s}$ \ in \ (\ref{reneq1}) \ corresponds to
component \ $v_{p+1}$ \  in (\ref{sure1}), i.e. \
$\alpha_{r+s}=v_{p+1}$.  If the conditions  of Theorem \ref{thm101}
are satisfied, then for a given \ $h\in[2^{-L}, 2^L]$, \ system \
(\ref{reneq1}) \ has solutions.

\medskip
\noindent \emph{Step 2:}\\
\noindent Find a basic feasible solution
${\alpha^0}^T=(\alpha_1^0,\alpha_2^0,\dots,\alpha_{r+s}^0)$ of
system   (\ref{reneq1}). If $\alpha_{r+s}^0$ is a  basic component
of ${\alpha^0}^T$, then, according  to Theorem \ref{thm101}, system
(\ref{sure1}) has a basic  feasible  solution that has
$v_{p+1}^0=\alpha_{r+s}^0$ as a basic component,  i. e.\emph{ $Y
\not \subset Y_h$ holds and
 Stop}. If  $\alpha_{r+s}^0$ is a nonbasic component
 of ${\alpha^0}^T$, then all basic components are among
$\alpha_1^0,\alpha_2^0,\dots,\alpha_{r+s-1}^0$. Without loss of
generality,  we  may  assume  that  the  basic  components of
${\alpha^0}^T$ represent the first its $r$ components
$\alpha_1^0,\alpha_2^0,\dots,\alpha_{r}^0$, which correspond to the
linearly independent column vectors \ $W_1,W_2,\dots,W_r$  that form
a basis for the set of column  vectors of matrix  $W$  in
(\ref{reneq1});  if this is not so,  then we can relabel variables
in (\ref{reneq1})   so that the first $r$
 variables in  (\ref{reneq1}) will
 correspond to the basic variables. In this case, system  (\ref{reneq1})  in
 extended form has the structure of system  (\ref{blin2}) with the first  $r$
variables corresponding to the basic ones and with these  basic
variables  expressed  explicitly via  the  nonbasic variables, where
$\alpha_{r+s}=v_{p+1}$. Let ${\bf{W}}_{r+s}$ be the column vector of
coefficients
 of  system  (\ref{blin2})  that
corresponds  to  $\alpha_{r+s}$, i. e.
${\bf{W}}_{r+s}^T=({\bf{w}}_{1,r+s},{\bf{w}}_{2,r+s},\dots,{\bf{w}}_{r,r+s})$,
and ${\bf{W}}_0$ \ be the column vector  of  right-hand  side of
coefficients  of \ system  (\ref{blin2}), i.e.
${\bf{W}}_0^T=({\bf{w}}_{1,0},{\bf{w}}_{2,0},\dots,{\bf{w}}_{r,0}).$
 If  the  components  of  these  vectors
 satisfy conditions of  Lemma \ref{izar1}, i.e. if either
 ${\bf{W}}_{r+s}$ contains a positive
 component  ${\bf{w}}_{i_0,r+s}\ ({\bf{w}}_{i_0,r+s}>0)$   or
${\bf{W}}_{r+s}$ \ contains a nonzero  component
${\bf{w}}_{i_0,r+s}\neq 0$  (positive or negative)  such that  the
corresponding  component ${\bf{w}}_{i_0,0}$  of  vector
${\bf{W}}_0^T$  is equal to  zero,  then according to Lemma
\ref{izar1}, system (\ref{blin2})   has a  basic  feasible  solution
${\alpha^*}^T=(\alpha^*_1,\alpha^*_2,\dots,\alpha^*_{r+s})$ that has
$\alpha^*_{r+s}$ as a basic component; such a basic solution can be
obtained by using one step of the  standard pivoting calculating
procedure from  Lemma \ref{izar1}. So, in this case, system
(\ref{sure1}) has a basic feasible solution $x_1^*,\ x_2^*,\ \dots,\
x_{n+q}^*,v_1^*,\ v_2^*,\ \dots,v_{p+1}^*$ that has $v_{p+1}^*$ as a
basic component, i. e. \emph{ $Y \not \subset Y_h$ holds and Stop}.
If the components of the vectors ${\bf{W}}_{r+s}$ and ${\bf{W}}_0$
of system (\ref{blin2}) do not satisfy the conditions of Lemma
\ref{izar1}, then go to  next \emph{Step 3}.

\medskip\noindent
 \emph{Step 3:}\\
We relabel the equations and variables
$\alpha_1,\alpha_2,\dots,\alpha_r$ of system (\ref{blin2}) so that
it has the form of  system (\ref{blinn21}), where \
$${\bf{w}}_{i,r+s}=0,\ \ i=1,2,\dots, k-1; \ \ \ \  {\bf{w}}_{i,r+s}<0,\
\ \ i=k, k+1,\dots, r,$$ and
$$ \ {\bf{w}}_{i,0} \ \geq \ 0,\ \ \ i=1,2,\dots,k-1;\ \ \ \
\ \ {\bf{w}}_{i,0}\ >  0,\ \ \  i=k,k+1,\dots,r.$$ Then go to next
\emph{Step 4}.

\medskip\noindent
 \emph{Step 4:}\\
The problem of the existence of the basic  solution
${\alpha^*}^T=(\alpha^*_1,\alpha^*_2,\dots,\alpha^*_{r+s})$ for
system (\ref{blinn21}) that has $\alpha^*_{r+s}$ as a basic
component  can be solved on the basis  of  Theorems
\ref{capmt11}-\ref{corl81} and  Lemma \ref{finis1}  as  follows. Fix
an  index $i_0\in\{k, k+1, \dots, r\}$  and check if system \
(\ref{stefan1}) \ is consistent. If this system has solutions, then
based on Lemma \ref{finis1} and Theorem \ref{lbl1}, \ system
(\ref{blinn21}) has no a basic solution with the basic component \
$\alpha^*_{r+s}$ \ and consequently system \ (\ref{sure1}) \ has no
basic solution that has \ $v^*_{p+1}$, \ as a basic component, \
i.e.\ the \ \emph{condition \ $Y \not \subset Y_h$ \ fail to holds
and Stop}; \ if \ system \ (\ref{stefan1}) \ has no solutions, then
we consider for each \ $i_0\in \{k,k+1,\dots,r\}$ \ the \ linear \
programming \ problem: Maximize the objective function \
(\ref{bneq1}) \ subject to \ (\ref{blinn21}). By solving such
linear\  programming \ problems \ for \ $i_0= k,k+1,\dots,r$, we
determine the corresponding optimal values
$z^*_k,z^*_{k+1},\dots,z^*_r$ of the objective functions for these
linear programming problems. If there exists \ $z_{i_0}^*\in
\{z^*_k,z^*_{k+1},\dots,z^*_r\}$ \ such that \ $z_{i_0}\geq 0$, \
then, according to Theorem \ref{capmt11}, system \ (\ref{blinn21}) \
has a basic solution \
${\alpha^*}^T=(\alpha^*_1,\alpha^*_2,\dots,\alpha^*_{r+s})$ \ such
that $\alpha^*_{r+s}$ is a basic component and consequently  system
\ (\ref{sure1}) \ has a basic  solution that has \ $v^*_{p+1}$, as a
basic component,  i.e. \ \emph{$Y \not \subset Y_h$ \ holds \ and
Stop}; if $z^*_i<0, \ i=k,k+1,\dots,r,$ then according to Corollary
\ref{clly}, system (\ref{blinn21}) has no  basic solution
${\alpha^*}^T=(\alpha^*_1,\alpha^*_2,\dots,\alpha^*_{r+s})$ that has
$\alpha^*_{r+s}$ as a basic component, and consequently system
(\ref{sure1}) has no  basic  solution that has $v^*_{p+1}$ as a
basic component, i.e. \emph{ condition $Y \not \subset Y_h$ fail to
holds and Stop}.
\begin{remark} \ Algorithm 1 determines if  the condition
\ $Y \not \subset Y_h$ \ holds for   a  given   $h\in [-2^L,2^L]$.
In the case when this condition holds, the algorithm determines also
a basic feasible solution
${\alpha^*}^T=(\alpha^*_1,\alpha^*_2,\dots,\alpha^*_{r+s})$  of
system (\ref{blinn21}) that has $\alpha^*_{r+s}$ as a basic
component for a given $h$, and consequently determines a basic
feasible solution $x_1^*,\ x_2^*,\ \dots,\ x_{n+q}^*,v_1^*,\ v_2^*,\
\dots,v_{p+1}^*$ of system (\ref{sure1}) that has $v_{p+1}^*$ as the
basic component.
\end{remark}

In the worst case Algorithm 1 solve not more that $r$ linear
programming problems. Therefore the computational complexity of this
algorithm depends on the computational complexity of linear
programming algorithms \ used \ in  Algorithm 1. Taking into account
that for solving a linear programming problem  there exist
polynomial algorithms (see \cite{dant1,dant2,kmkr,k80,k82,sh1}), we
can formulate the following result.
\begin{theorem} Algorithm 1 for checking  the condition $Y \not \subset Y_h$ and determining a basic feasible solution
$x_1^*,\ x_2^*,\ \dots,\ x_{n+q}^*,v_1^*,\ v_2^*,\ \dots,v_{p+1}^*$
of system (\ref{sure1}) that has  $v_{p+1}^*$  as a basic component
for fixed $h\in [-2^L,2^L]$ is polynomial.
\end{theorem}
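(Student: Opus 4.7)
The plan is to argue that every step of Algorithm 1 either consists of elementary linear-algebra manipulations on a system whose size is polynomial in $L$, or reduces to solving a single linear programming problem of polynomial size, and that the total number of such LP calls is at most $r+1=q+m+2$. Since each linear programming problem can be solved in polynomial time by the ellipsoid method \cite{k80,k82} or by the interior-point methods \cite{kmkr}, the overall running time will be polynomial in $L$.

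First I would verify that the input to the algorithm has size polynomial in $L$. System (\ref{sure1}) has $r=q+m+1$ equations and $n+q+p+1$ nonnegative variables; its coefficients are the integer coefficients of problem (\ref{eq01})-(\ref{eq03}) together with the single rational entry $h=M/N$. After clearing denominators in the $r$-th equation by a factor of at most $2^L$ (as noted in Section \ref{ldmd3}), all coefficients of system (\ref{reneq1}) are integer and their binary length is bounded by a polynomial in $L$. Next I would address Step 2: finding a basic feasible solution of the linear system (\ref{reneq1}) amounts to solving a Phase~I linear programming problem, which is polynomial; inspecting the pivoting rule of Lemma \ref{izar1} to detect a pivot element ${\bf w}_{i_0,r+s}$ satisfying its conditions, and executing that single pivot step if it exists, takes only $O(r\cdot(r+s))$ arithmetic operations on numbers of polynomially bounded length. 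The relabeling in Step 3 is a permutation of rows and columns, clearly polynomial.

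The heart of the argument concerns Step 4. For each $i_0 \in \{k,k+1,\dots,r\}$, the algorithm performs at most two linear programming tasks: one to check whether the system (\ref{stefan1}) is consistent (a feasibility LP), and, if it is inconsistent, one to maximize the linear objective (\ref{bneq1}) over the solution set of (\ref{blinn21}) (a linear programming problem). By Lemma \ref{finis1}, inconsistency of (\ref{stefan1}) is equivalent to boundedness of (\ref{bneq1}) on the set of solutions of (\ref{blinn21}); hence whenever (\ref{stefan1}) is inconsistent the subsequent maximization has a finite optimum $z_{i_0}^*$, so the algorithm never runs into an unbounded LP. Since $|\{k,k+1,\dots,r\}|\le r$, the total number of LP calls in Step 4 is at most $2r$, and each LP has size polynomial in $L$. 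Combined with the Phase~I call in Step 2, the algorithm solves at most $2r+1$ polynomial-size linear programming problems.

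The main (and in fact only) obstacle is to be explicit that all numerical data processed by the algorithm remain of polynomially bounded binary length: the matrix $W$ and vector $W_0$ in (\ref{reneq2}), the updated tableau $({\bf W},{\bf W}_0)=(W_B^{-1}W_N, W_B^{-1}W_0)$ used to pass from (\ref{reneq1}) to (\ref{blin2}), and the constraint matrices of the LPs in Step 4 all have entries given by ratios of subdeterminants of $W$, whose binary size is bounded by a polynomial in $L$ by the standard Hadamard-type estimate used already in the proof of Lemma \ref{lem1}. Given these size bounds, invoking a polynomial LP algorithm such as Khachiyan's \cite{k80,k82} or Karmarkar's \cite{kmkr} on each subproblem yields an overall polynomial running time, which is exactly what the theorem asserts.
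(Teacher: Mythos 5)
Your proposal is correct and follows essentially the same route as the paper: the paper's justification is simply that Algorithm 1 solves at most $r$ linear programming problems, each solvable in polynomial time by the methods of \cite{k80,k82,kmkr}, hence the whole procedure is polynomial. Your version is a more careful elaboration of that same argument --- you additionally account for the Phase~I call in Step 2, the feasibility checks of system (\ref{stefan1}), and the polynomial bit-length of the tableau entries $W_B^{-1}W_N$ and $W_B^{-1}W_0$ via subdeterminant bounds --- details the paper leaves implicit, but nothing in your approach departs from the paper's reasoning.
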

\section{Polynomial algorithm for solving the disjoint bilinear
programming problem (\ref{eq01})-(\ref{eq03})}\label{finita} We
present a polynomial algorithm for solving the  disjoint bilinear
programming problem (\ref{eq01})-(\ref{eq03}) in which the set of
solutions $Y$ of system (\ref{eq03}) satisfies condition a)-c), i.e.
when $Y$ has the structure of a perfect polytope.

 \medskip
\noindent \textbf{Algorithm 2:}

\medskip\noindent If for disjoint bilinear programming  problem \
(\ref{eq01}) - (\ref{eq03}) the corresponding system (\ref{eq1710})
has solutions, then an optimal point $x^*\in X$ for this problem can
be found by solving the following  linear programming problem:
Minimize the objective function (\ref{crd}) subject to
(\ref{eq1710}). Then $x^*$ together with an arbitrary $y\in Y$
represent an optimal solution of disjoint bilinear programming
problem (\ref{eq01})-(\ref{eq03}). In this case, the optimal value
of the disjoint bilinear programming problem is $h^*=gx^*$. If
system (\ref{eq1710}) is inconsistent, then we use the following
iterative calculation procedure for determining the optimal solution
of disjoint bilinear programming problem \ (\ref{eq01}) -
(\ref{eq03}):

\medskip
\noindent\emph{Step 0:(Preliminary step }:\\
Fix $h_1^1=-2^L$,  \ $h_1^2= 2^L$, \ $\varepsilon = 2^{-2L-2}$.

\medskip
\noindent\emph{General  step (step $k, \, k\geq 1$):}\\
 Check if
 $h_k^2 -h_k^1\leq \varepsilon$.  If  $h_k^2 -h_k^1\leq
\varepsilon$, then go to \emph{Final step}; If $h_k^2 -h_k^1
>\varepsilon,$ then  find  $h^0_k=\displaystyle\frac{h_k^1 +
h_k^2}{2}$,  and  consider system \ (\ref{reneq1}) \ with \
$h=h^0_k$, \ and  apply Algorithm 1 to determine if this system has
a basic feasible solution
${\alpha^*}^T=(\alpha^*_1,\alpha^*_2,\dots,\alpha^*_{r+s})$ that has
$\alpha^*_{r+s}$ as the basic component. If system (\ref{reneq1})
has such a solution, then $Y \not \subset Y_h$ and based on the
relationship of this system with system (\ref{sure1}), we obtain a
basic feasible solution $x_1^*,x_2^*,\dots,x_{n+q}^*,v_1^*,v_2^*,
\dots,v_{p+1}^*$ for system (\ref{sure1}) with $h=h^0_k$ that has
$v_{p+1}^*$ as a basic component. In this case, we set
$h_{k+1}^1=h^1_k$, $h_{k+1}^2=h^0_k$ and go to step $k+1$. If system
(\ref{reneq1}) has no a basic feasible solution
${\alpha^*}^T=(\alpha^*_1,\alpha^*_2,\dots,\alpha^*_{r+s})$ that has
$\alpha^*_{r+s}$ as a basic component, then the condition $Y \not
\subset Y_h$ fails to hold, and  we put $h^1_{k+1}=h^0_k$,
$h^2_{k+1}=h^2_k$ and go to step $k+1$.

\medskip\noindent
\textit{Final Step}:\\
After $3L+2$ iterations of the general step of the algorithm, we
determine the value $\widetilde h=h_{3L+2}$ with no more $L$ digits
before the decimal point and no more $3L+2$ digits after the decimal
point. The value $\widetilde h$  approximates the exact optimal
value $h^*$ that is a rational value representing an irreducible
fraction $h^*=\displaystyle\frac{M}{N}$ such that $|M|,|N|\leq 2^L$
and $|\widetilde h -\displaystyle\frac{M}{N}|\leq 2^{-2L-2}$.
Therefore, here, in a similar way as it has been done in
\cite{k80,k82} for the linear programming problem, we find the exact
value $h^*$ by representing \ $\widetilde h$ as an infinite fraction
expansion. After that, fix $h=h^*$  in (\ref{sure1}) and find an
optimal basic solution $x_1^*,\ x_2^*,\ \dots,\ x_{n+q}^*,v_1^*,\
v_2^*,\ \dots,v_{p+1}^*$  for system (\ref{sure1}) that has
$v_{p+1}^*$ as the basic component. According to Theorem
\ref{cor102}, \ $x_1^*,\ x_2^*,\ \dots,\ x_n^*$ \ represent the
components of an optimal point $x^*$ for the disjoint linear
programming problem (\ref{eq01})-({\ref{eq03}). If $x^*$ is known,
then by finding a solution of system (\ref{ghdm1}) we determine
 the components $y^*_1,y^*_2,\dots,y^*_m$ of the
optimal point $y^*$, where $(x^*,y^*)$ is an optimal solution of
disjoint bilinear programming (\ref{eq01})-{\ref{eq03} for which
$z^*=x^*Cy^*+gx^*+ey^*$ and \emph{Stop}.

\medskip
Finally, we can note that the computational complexity of  Algorithm
2 for solving problem (\ref{eq01}) - (\ref{eq03}) depends in the
most part on the computational complexity of the linear programming
algorithms used at the general  and  final steps of the algorithm.
Taking into account that for the linear programming problem there
exist polynomial algorithms  (see
\cite{dant1,dant2,kmkr,k80,k82,sh1}) and that the number of
iterations at the general step of the algorithm is polynomial, we
can formulate the following theorem.
\begin{theorem} Algorithm 2 finds an optimal solution of the disjoint
bilinear programming problem (\ref{eq01})-(\ref{eq03}) in polynomial
time.
\end{theorem}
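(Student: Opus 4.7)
The plan is to verify correctness and the polynomial bound separately, since the algorithm has two clearly distinct phases: the bisection on $h$ and the final exact-recovery plus reconstruction of $(x^*,y^*)$. For correctness, I would first dispatch the easy case in which system (\ref{eq1710}) is consistent: Corollary \ref{corr00} of Theorem \ref{tm01} immediately gives that the linear programming problem (\ref{crd}),(\ref{eq1710}) yields an optimal $x^*$ and that $(x^*,y)$ is optimal for any $y\in Y$, with $h^*=gx^*$. In the remaining case, where (\ref{eq1710}) is inconsistent, Corollary \ref{blin0} of Theorem \ref{thm101} asserts that $Y\not\subset Y_h$ holds iff system (\ref{sure1}) has a basic feasible solution with $v_{p+1}$ basic, and Algorithm 1 detects this condition exactly in polynomial time. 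Combined with the monotonicity $Y\not\subset Y_{h}\Rightarrow Y\not\subset Y_{h'}$ for $h'\ge h$ (which follows since enlarging $h$ shrinks $\overline{Y}_h$ in the sense used in Remark \ref{remcor1}), the bisection is valid: each step preserves the invariant that $h^*\in(h_k^1,h_k^2]$.

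Next I would bound the bisection length. Starting from $h_1^2-h_1^1=2^{L+1}$ and halving each step, after $3L+2$ iterations we obtain $h_{3L+2}^2-h_{3L+2}^1\le 2^{L+1}/2^{3L+2}=2^{-2L-1}<\varepsilon$. By Lemma \ref{lem1}, $h^*$ is a rational $M/N$ with $|M|,|N|\le 2^L$, so any two such admissible rationals differ by at least $1/(2^L\cdot 2^L)=2^{-2L}$; consequently the interval of length at most $2^{-2L-1}$ around $\widetilde{h}$ contains a unique candidate $h^*$. The continued-fraction rounding procedure from \cite{k80,k82} recovers $h^*=M/N$ from $\widetilde h$ in polynomial time, exactly as in Khachiyan's argument for linear programming. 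Once $h^*$ is known, applying Algorithm 1 to system (\ref{sure1}) at $h=h^*$ produces a basic feasible solution with $v_{p+1}^*$ basic, and by Theorem \ref{cor102} the first $n$ components $x_1^*,\dots,x_n^*$ form an optimal $x^*$, while solving the linear system (\ref{ghdm1}) recovers a matching $y^*\in Y$.

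For the complexity bound, I would observe that every primitive used by Algorithm 2 is polynomial-time: checking consistency of (\ref{eq1710}) and solving the linear programs (\ref{crd}),(\ref{eq1710}), (\ref{stefan1}), the maximization of (\ref{bneq1}) on (\ref{blinn21}), and the final reconstruction system (\ref{ghdm1}) are all linear programming tasks, which admit polynomial-time solvers \cite{k80,k82,kmkr,sh1}. Algorithm 1 itself solves at most $r=q+m+1$ such linear programs, so each invocation is polynomial in $L$, $n$, $m$, $p$, $q$. The bisection executes $O(L)$ general steps, and the final step adds one continued-fraction expansion plus one more Algorithm 1 call and one linear-algebra reconstruction. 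Summing these polynomial contributions gives an overall polynomial-time algorithm.

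The main conceptual obstacle I would focus attention on is the transition from the approximate value $\widetilde h$ to the exact value $h^*$. This requires combining the separation bound $2^{-2L}$ coming from Lemma \ref{lem1} with the continued-fraction identification argument, and then verifying that at the exact $h=h^*$ the system (\ref{sure1}) still admits the required basic feasible solution with $v_{p+1}^*$ basic (possibly with $v_{p+1}^*=0$ as in Theorem \ref{thm101}); this is precisely the content of Theorems \ref{thm101} and \ref{cor102}, so the argument closes.
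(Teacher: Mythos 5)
Your proposal is correct and follows essentially the same route as the paper, which itself only gestures at the argument (bisection with $O(L)$ calls to the polynomial Algorithm~1, Khachiyan-style continued-fraction recovery of the exact rational $h^*$ via the separation bound from Lemma~\ref{lem1}, then reconstruction of $(x^*,y^*)$ through Theorem~\ref{cor102} and system~(\ref{ghdm1})); your write-up actually supplies more of the missing detail (monotonicity of $Y\not\subset Y_h$ in $h$, the $2^{-2L}$ gap between admissible rationals) than the paper does. The only slip is arithmetic and harmless: $2^{L+1}/2^{3L+2}=2^{-2L-1}$ is not below $\varepsilon=2^{-2L-2}$, so one additional halving is needed, which does not affect the polynomial bound.
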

\section{Application of Algorithm 2 for solving the problems with an acute-angled
polytope from Section \ref{ddd}}\label{finita1} In this section, we
show how Algorithm 2 can be applied for solving the boolean linear
programming problem (\ref{eq4})-({\ref{eq5}) and the piecewise
linear concave programming problem (\ref{eq2}), (\ref{eq9})}.
\subsection{Application of Algorithm 2 \ for \ finding  a  boolean \ solution \ of system
\ (\ref{eq31})\  and solving the boolean linear programming \
(\ref{eq4})-(\ref{eq5})}The problem of finding a boolean solution
for system (\ref{eq31}) can be solved by using Algorithm 2, because
this problem can be represented as the disjoint bilinear programming
problem (\ref{equat3})-(\ref{equat5}) where the set of solution $Y$
has the structure of an acute-angled polytope. Indeed, in this case
the matrix $D=(d_{ij})_{p\times n}$ and the vector
$d_0=(d_{10},d_{20},\dots, d_{m0})^T$ have the following structure
$$D=\left(\begin{array}{ccccc}
\ \ 1 &\ \ 0 &\ \ 0 & \cdots &\ \ 0\\
\ \ 0 &\ \ 1 &\ \ 0 & \cdots &\ \ 0\\
\ \ 0 &\ \ 0 &\ \ 1 & \cdots &\ \ 0\\
\vdots & \vdots & \vdots & \ddots & \vdots \\
\ \ 0 &\ \ 0 &\ \ 0 & \cdots &\ \ 1\\
-1 & \ \ 0 &\ \ 0 & \cdots &\ \ 0\\
\ \ 0 & -1 &\ \ 0 & \cdots &\ \ 0\\
\ \ 0 &\ \ 0 & -1 & \cdots &\ \ 0\\
\vdots & \vdots & \vdots & \ddots & \vdots \\
\ \ 0 &\  \ 0 &\ \ 0 & \cdots & -1
\end{array}\right), \quad
d_0=\left(\begin{array}{c}
1\\
1\\
1\\
\vdots\\
1\\
0\\
0\\
0\\
\vdots\\
0
\end{array}\right),
$$
where $p=2n$  and \ $C=(c_{ij})_{n\times m}$, \ $g=(g_1, g_2,\dots,
g_n),$ \ $ e=(e_1, e_2,\dots, e_m)$ are defined as follows:
$$
c_{ij}=\left\{%
\begin{array}{ll}
            2, &  \mbox{if } \  i=j, \\[1.5mm]
            0, &  \mbox{if } \  i\neq j,
\end{array}
\right. \  g_i=-1,\  e_j=-1, \ \ i=1,2,\dots n; j=1,2,\dots,m,
$$
Therefore by applying Algorithm 2, we can determine a boolean
solution for the system of linear inequalities (\ref{eq31}) if the
optimal value of the object function $h^*$ of the considered
disjoint bilinear programming problem is equal to zero; otherwise
this system has no a boolean solution.

If it is necessary to solve the classical boolean linear programming
problem (\ref{eq4}),(\ref{eq5}) then we can use the disjoint
bilinear programming problem (\ref{eq6})-(\ref{eq8}) with the
corresponding $M$ chosen as mentioned in Section \ref{ddd}. In this
case the matrix $D$  and column vector $d_0$ are the same as above,
and $C=(c_{ij})_{n\times m}$, $g=(g_1, g_2,\dots, g_n),$ $ e=(e_1,
e_2,\dots, e_m)$ are defined as follows:
$$
c_{ij}=\left\{%
\begin{array}{ll}
            2M, &  \mbox{if } \  i=j, \\[1.5mm]
             0, &  \mbox{if } \  i\neq j,
\end{array}
\right. \  g_i=c_i-M,\  e_j=-M, \ \ i=1,2,\dots n; j=1,2,\dots,m.
$$
\subsection{Polynomial time \ algorithm \ for \ solving
\  the \  piecewise \ linear  concave programming problem
(\ref{eq2}),(\ref{eq9})}\label{finita2} The piecewise linear concave
programming problem consists of minimizing the  piecewise linear
concave function (\ref{eq9}) on the set of solution of system
(\ref{eq2}). In Section \ref{ddd} has been shown that this problem
can be represented as the bilinear programming problem
(\ref{eq10})-(\ref{eq12}). It can be observed that in this problem
the set of solutions  $Y$ of system (\ref{eq12}) has the structure
of an acute-angled polytope. So, we obtain a disjoint bilinear
programming problem (\ref{eq01})-(\ref{eq03}) in which the matrix
$D=(d_{ij})_{p\times m}$ and vector
$d_0=(d_{10},d_{20},\dots,d_{p0})^T$ are defined as follows:
$$
D=\left(\begin{array}{ccccccccccccc}
 \ 1 & \ 1 & \dots & 1 &  0 &  0 & \dots &\ 0 & \dots & \ 0 \ & \  0 & \dots & \  0\\
 \ 0 & \ 0 & \dots &  0 &  1 &  1 & \dots &\ 1 & \dots & \  0 \ & \  0 & \dots & \  0\\
\vdots & \vdots & \ddots & \vdots & \vdots & \vdots & \ddots &
\vdots & \ddots & \vdots & \vdots & \ddots & \vdots \\
 \ 0 & \  0 & \dots &   0 &   0 &  0 & \dots &\   0 & \dots &\   1\  &\   1 & \dots &\  1\\
 -1  & \ \   0 \ \ & \ \dots \  & \  0 \   & \   0 \   & \   0\   & \dots &\  0 \ & \dots & \  0 & \  0 & \dots &\   0\\
 \  0 & -1& \dots &   0 &   0 &   0 & \dots &\   0 & \dots &\   0 \ &\   0 & \dots &\ 0\\
\vdots & \vdots & \ddots & \vdots & \vdots & \vdots & \ddots &
\vdots & \ddots & \vdots & \vdots & \ddots & \vdots \\
 \  0 &\   0 & \dots &  0 &   0 &  0 & \dots &\   0 & \dots & \  0 \ &\   0 & \dots & -1
\end{array}\right), \
d_0=\left(\begin{array}{c}
1\\
1\\
\vdots\\
1\\
0\\
0\\
\vdots \\
0
\end{array}\right),$$
where  $m=\sum_{i=1}^l m_i-l,  p=l+m$.  The matrix  $C$  in this
problem is determined by the column vectors
$\overline{c}^{j,k}=\!\!(c^{jk}-c^{jm_j})^T,  j=\!\!1,2,\dots, l;
k=\!\!1,2,\dots,m_j-1,$\\
i.e.
$$C=[\overline{c}^{11},\overline{c}^{12},\ \dots,
\overline{c}^{1r_1-1},\overline{c}^{21},\overline{c}^{22}, \ \dots,
\overline{c}^{2m_2-1}, \dots\dots, \
\overline{c}^{l1},\overline{c}^{l2},\dots,
\overline{c}^{l{m_l-1}}];$$  $g=\sum\limits_{j=1}^lc^{jm_j}$ and \
$e$ \ is the vector determined by the following components \
$$\overline{c}_0^{jk}=c_0^{jk}-c_0^{jm_j},\
 j=1,2,\dots, l;  k=1,2,\dots,m_j-1, $$ i. e.
$$e=(\overline{c}_0^{11},\overline{c}_0^{12},\dots,
\overline{c}_0^{1m_1-1},\overline{c}_0^{21},\overline{c}_0^{22},\dots,
\overline{c}_0^{2m_2-1}, \dots\dots, \
\overline{c}_0^{l1},\overline{c}_0^{l2},\dots,
\overline{c}_0^{l{m_l-1}}).$$ So, by applying Algorithm 2 for the
obtained disjoint bilinear programming problem  with given matrix
$D$ and column vector $d_0$ for system (\ref{eq12}), we obtain the
optimal solution of  the  piecewise linear concave programming
problem (\ref{eq2}), (\ref{eq9})}.

\section{Conclusion}
 A polynomial
algorithm for solving the disjoint bilinear programming problem with
a perfect disjoint subset has been proposed and grounded. This means
that a polynomial algorithm  exists for the disjoint bilinear
programming problem in which one of the disjoint subsets has the
structure of an acute-angled polytope. Taking into account that the
boolean linear programming problem and the piecewise linear concave
programming problem can be represented as disjoint bilinear problems
with an acute-angled polytope for one of the disjoint subsets, these
problems can be solved in polynomial time using Algorithm 2. Based
on this an important conclusion concerned with the coincidence of
classes $P$ and $NP$ (see \cite{fort2,gar,karp,sh1}) can be made.






\end{document}